\documentclass[a4paper,12pt,centertags,oneside]{amsart}
\usepackage{amsmath,amstext,amsthm,a4,amssymb,amscd}
\usepackage[mathscr]{eucal}
\usepackage{mathrsfs}
\usepackage{bbold}
\usepackage{epsf}
\usepackage{typearea}
\usepackage{charter}

\usepackage{graphicx}
\usepackage[all]{xy}

\newcommand{\R}{\mathbb{R}}
\newcommand{\C}{\mathbb{C}}
\newcommand{\Z}{\mathbb{Z}}
\newcommand{\N}{\mathbb{N}}

\newcommand{\smooth}{{\mathscr{C}^\infty}}

\makeatletter
\newcommand*{\rom}[1]{\expandafter\@slowromancap\romannumeral #1@}
\makeatother

\newtheorem{prop}{Proposition}[section]
\newtheorem{thm}[prop]{Theorem}

\newtheorem{cor}[prop]{Corollary}

\theoremstyle{definition}
\newtheorem{defn}[prop]{Definition}

\theoremstyle{remark}
\newtheorem{rem}[prop]{Remark}

\numberwithin{equation}{section}
\setcounter{section}{-1}

\title{An extension of BCOV invariant}
\date{\today}

\author{Yeping ZHANG}
\address{School of Mathematics,
Korea Institute for Advanced Study,
Hoegiro 85, Dongdaemungu,
Seoul 02455, Korea}
\email{ypzhang@kias.re.kr}

\begin{document}

\begin{abstract}
Bershadsky, Cecotti, Ooguri and Vafa
constructed a real valued invariant for Calabi-Yau manifolds,
which is called the BCOV invariant.
In this paper,
we consider a pair $(X,Y)$,
where $X$ is a compact K{\"a}hler manifold
and $Y\in\big|K_X^m\big|$ with $m\in\Z\backslash\{0,-1\}$.
We extend the BCOV invariant to such pairs.
If $m=-2$ and $X$ is a rigid del Pezzo surface,
the extended BCOV invariant is equivalent to Yoshikawa's equivariant BCOV invariant.
If $m=1$,
the extended BCOV invariant is well-behaved under blow-up.
It was conjectured that
birational Calabi-Yau threefolds have the same BCOV invariant.
As an application of our extended BCOV invariant,
we show that this conjecture holds for Atiyah flops. \\
Keywords: analytic torsion, Calabi-Yau manifolds, birational maps. \\
MSC classification: 58J52.
\end{abstract}

\maketitle

\tableofcontents

\section{Introduction}
\label{sect-intro}

The BCOV torsion is
a real valued invariant for Calabi-Yau manifolds equipped with Calabi-Yau metrics.
Bershadsky, Cecotti, Ooguri and Vafa initiated
the study of BCOV torsion for Calabi-Yau threefolds
in the outstanding papers \cite{bcov,bcov2}.
Their work extended the mirror symmetry conjecture of
Candelas, de la Ossa, Green and Parkes \cite{cdgp}.
Fang and Lu \cite{fl}
studied the BCOV torsion for Calabi-Yau manifolds of arbitrary dimension.

The BCOV invariant is
a real valued invariant for Calabi-Yau manifolds,
which could be viewed as a normalization of the BCOV torsion.
Fang, Lu and Yoshikawa \cite{fly} constructed and studied
the BCOV invariant for Calabi-Yau threefolds.
Their work confirmed a conjecture of
Bershadsky, Cecotti, Ooguri and Vafa \cite{bcov,bcov2}
concerning the BCOV torsion of quintic mirror threefolds.
Eriksson, Freixas and Mourougane \cite{efm,efm2}
extended these results to Calabi-Yau manifolds of arbitrary dimension.

For a Calabi-Yau manifold $X$,
we denote by $\tau(X)$
the logarithm of the BCOV invariant of $X$ defined in \cite{efm}.

Fang, Lu and Yoshikawa \cite[Conjecture 4.17]{fly} conjectured the following:
for a pair of birationally equivalent Calabi-Yau threefolds $(X,X')$,
we have
\begin{equation}
\tau(X') - \tau(X) = \nu(X,X') \;,
\end{equation}
where $\nu(X,X')$ is a real number determined by the topology of $(X,X')$.
In particular,
for two families of Calabi-Yau threefolds $\big(X_s\big)_{s\in S}$ and $\big(X_s'\big)_{s\in S}$
such that $X_s$ and $X_s'$ are birational for any $s\in S$,
the conjecture implies that the function
\begin{align}
\begin{split}
S & \rightarrow \R \\
s & \mapsto \tau(X_s') - \tau(X_s)
\end{split}
\end{align}
is locally constant.
In an earlier paper \cite[Conjecture 2.1]{yo06},
Yoshikawa made a stronger conjecture:
for a pair of birationally equivalent Calabi-Yau threefolds $(X,X')$,
we have
\begin{equation}
\tau(X') = \tau(X) \;.
\end{equation}

Let $X$ and $X'$ be projective Calabi-Yau threefolds defined over a field $L$.
Let $T$ be a finite set of embeddings $L\hookrightarrow\C$.
For $\sigma\in T$,
we denote by $X_\sigma$ (resp. $X_\sigma'$) the base change of $X$ (resp. $X'$) to $\C$ via the embedding $\sigma$.
We denote by $D(X_\sigma)$ (resp. $D(X_\sigma')$) the derived category of coherent sheaves on $X_\sigma$ (resp. $X_\sigma'$).
Maillot and R{\"o}ssler \cite[Theorem 1.1]{ma-ro} showed that
if one of the following conditions holds,
\begin{itemize}
\item[(a)] there exists $\sigma\in T$ such that $X_\sigma$ and $X_\sigma'$ are birational,
\item[(b)] there exists $\sigma\in T$ such that $D(X_\sigma)$ and $D(X_\sigma')$ are equivalent as triangulated $\C$-linear categories,
\end{itemize}
then there exist a positive integer $n$ and a non-zero element $\alpha\in L$
such that for any $\sigma\in T$,
we have
\begin{equation}
\tau(X_\sigma') - \tau(X_\sigma) = \frac{1}{n} \log \big|\sigma(\alpha)\big| \;.
\end{equation}
While a result of Bridgeland \cite[Theorem 1.1]{brid} showed that (a) implies (b),
Maillot and R{\"o}ssler gave two separated proofs for (a) and (b).

The purpose of this paper is to extend the BCOV invariant to certain non Calabi-Yau cases
and establish several properties of this extension.
One of our results says that
Yoshikawa's conjecture \cite[Conjecture 2.1]{yo06} holds for Atiyah flops.

Since the BCOV invariant is defined as a product of certain Quillen metrics \cite{q},
the work of Bismut, Gillet and Soul{\'e} \cite{bgs1,bgs2,bgs3} on the Quillen metric
is of fundamental importance in this paper.
The immersion formula of Bismut and Lebeau \cite{ble} is
another powerful tool which we will use.

Let us now give more detail about the matter of this paper.

Let $X$ be a compact K{\"a}hler manifold.
Let $K_X$ be its canonical bundle.
Let $m\in\Z\backslash\{0,-1\}$.
Let $K_X^m$ be the $m$-th tensor power of $K_X$.
We assume that $H^0(X,K_X^m)\neq 0$.
Let $\gamma \in H^0(X,K_X^m)\backslash\{0\}$.
Let $Y$ be the zero locus of $\gamma$.
We assume that $Y$ is smooth and reduced.
We call $(X,Y)$ an $m$-Calabi-Yau pair.
We will construct a real number $\tau(X,Y)$
determined by the complex structure of $(X,Y)$.
If $X$ is Calabi-Yau,
then
\begin{equation}
\tau(X,\emptyset) = \tau(X) \;.
\end{equation}

\hfill \\
\noindent\textbf{Curvature of $\tau(X,Y)$.}
Let $\pi_\mathscr{X}: \mathscr{X}\rightarrow S$ be a locally K{\"a}hler holomorphic fibration,
i.e., for any $s\in S$, there exists an open subset $s\in U\subseteq S$ such that $\pi^{-1}(U)$ is K{\"a}hler.
For $s\in S$,
we denote $X_s = \pi_\mathscr{X}^{-1}(s)$.
Let $\mathscr{Y}\subseteq\mathscr{X}$ be a complex hypersurface
such that the restricted map
$\pi_\mathscr{Y} := \pi_\mathscr{X}\big|_\mathscr{Y}$ is a fibration.
For $s\in S$,
we denote $Y_s = \pi_\mathscr{Y}^{-1}(s)$.
Let $m\in\Z\backslash\{0,-1\}$.
We assume that
$(X_s,Y_s\big)$ is an $m$-Calabi-Yau pair for any $s\in S$.
Let $\tau(X,Y)$ be
the function $s\mapsto\tau(X_s,Y_s)$ on $S$.
One of our central results is a formula
relating $\overline{\partial}\partial\tau(X,Y)$
to the Hodge form and the Weil-Petersson form.

First we introduce the Hodge form.
Let $n=\dim X$.
Let $\big(g^{H^{p,q}(X)}\big)_{0\leqslant p,q\leqslant n}$
be Hermitian metrics on the holomorphic vector bundles
$\big(H^{p,q}(X)\big)_{0\leqslant p,q\leqslant n}$ over $S$
such that $g^{H^{p,q}(X)}(u,u)=g^{H^{q,p}(X)}(\overline{u},\overline{u})$
for any $p,q$ and any $u\in H^{p,q}(X)$.
The Hodge form associated with the variation of Hodge structure $H^\bullet(X)$ can be defined by
\begin{equation}
\label{eq-intro-hform}
\omega_{H^\bullet(X)} = \frac{1}{2} \sum_{0\leqslant p,q\leqslant n}
(-1)^{p+q}(p-q)c_1\big(H^{p,q}(X),g^{H^{p,q}(X)}\big)
\in \Omega^{1,1}(S) \;.
\end{equation}
We can show that
$\omega_{H^\bullet(X)}$ is independent of $\big(g^{H^{p,q}(X)}\big)_{0\leqslant p,q\leqslant n}$.
If $X_s$ is Calabi-Yau,
then \eqref{eq-intro-hform} is
the Hermitian form of the Hodge metric considered in \cite{fl}.
The Hodge form $\omega_{H^\bullet(Y)}\in \Omega^{1,1}(S)$
associated with $H^\bullet(Y)$
can be defined in the same way.

Now we introduce the Weil-Petersson form.
Locally we have a holomorphic map $s\mapsto\gamma_s \in H^0(X_s,K_{X_s}^m)$
such that $Y_s$ is the zero locus of $\gamma_s$.
The function
\begin{equation}
\label{eq0-intro-wp}
P: \; s \mapsto \log \int_{X_s}\big|\gamma_s\overline{\gamma_s}\big|^{1/m}
\end{equation}
is well-defined up to pluriharmonic functions,
where $\big|\gamma_s\overline{\gamma_s}\big|^{1/m}$
is the unique real semi-positive $(n,n)$-form on $X_s$
whose $m$-th tensor power equals $e^{i\theta}\gamma_s\wedge\overline{\gamma_s}$ for certain $\theta\in\R$.
The Weil-Petersson form is defined by
\begin{equation}
\label{eq-intro-wp}
\omega_{\pi_\mathscr{X},\pi_\mathscr{Y}}
= - \frac{\overline{\partial}\partial P}{2\pi i} \in \Omega^{1,1}(S) \;.
\end{equation}
If $X_s$ is Calabi-Yau,
then \eqref{eq-intro-wp} is the K{\"a}hler form
of the Weil-Petersson metric considered in \cite{chs,t} (cf. \cite{j}).

Let $\chi(X)$ (resp. $\chi(Y)$)
be the Euler number of $X$ (resp. $Y$).
We denote
\begin{equation}
\label{eq-def-w}
w_m(X) = \frac{\chi(X)}{12}
- \frac{\chi(Y)}{12(m+1)} \;.
\end{equation}

\begin{thm}
\label{intro-thm-curvature}
The following identity holds,
\begin{equation}
\label{eq-intro-thm-curvature}
\frac{\overline{\partial}\partial}{2\pi i} \tau(X,Y) =
\omega_{H^\bullet(X)} - \frac{1}{m+1}\omega_{H^\bullet(Y)}
- w_m(X) \omega_{\pi_\mathscr{X},\pi_\mathscr{Y}} \;.
\end{equation}
\end{thm}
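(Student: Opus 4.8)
In the relative setting $\tau(X,Y)$ is (the logarithm of) a Hermitian metric on a canonically trivialised line over $S$, assembled from Quillen metrics on the determinant-of-cohomology lines of the bundles $\Omega^p_{\mathscr X/S}$ and $\Omega^p_{\mathscr Y/S}$ --- the latter entering with the weight $\tfrac1{m+1}$ --- together with the $L^2$-metrics on the Hodge bundles of $X$ and of $Y$ and a factor $e^{c P}$, where $P$ is the Weil--Petersson potential of \eqref{eq0-intro-wp} and $c$ an explicit constant. Since $\tau(X,Y)$ is by construction independent of the auxiliary Kähler metric on $\mathscr X$ and of the local choices $s\mapsto\gamma_s$, the plan is to compute $\overline\partial\partial$ of each summand for one convenient choice and add up; I would fix on $\mathscr X$ the Hermitian metric on $K_{\mathscr X/S}$ induced by $\gamma$, so that every curvature contribution below is expressed through the single potential $P$.

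For the Quillen-metric summands I would apply the curvature theorem of Bismut, Gillet and Soulé \cite{bgs3}: for a Kähler fibration $\mathscr Z\to S$ and a holomorphic bundle $\xi$ on $\mathscr Z$, the first Chern form of the Quillen metric on the determinant of cohomology $\lambda(\xi)$ equals the $(1,1)$-component of $\int_{\mathscr Z/S}\mathrm{Td}\big(T\mathscr Z/S\big)\,\mathrm{ch}(\xi)$. Applying this with $\xi=\Omega^p_{\mathscr X/S}$ and with $\xi=\Omega^p_{\mathscr Y/S}$ and summing against the weights $(-1)^p p$ reduces everything to fiber integrals of universal forms in $c_\bullet\big(T\mathscr X/S\big)$ and $c_\bullet\big(T\mathscr Y/S\big)$, via the Chern--Weil identity obtained by differentiating the Borel--Serre identity, which expresses $\sum_p(-1)^p p\,\mathrm{ch}\big(\Lambda^pE^\vee\big)$ as a universal polynomial in the Chern forms of $E$. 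This is exactly the computation carried out for Calabi--Yau families by Fang--Lu \cite{fl} and Eriksson--Freixas--Mourougane \cite{efm}; the arguments there do not use that the fibers are Calabi--Yau and apply to $\pi_{\mathscr X}$ and $\pi_{\mathscr Y}$ verbatim. One then splits the output into a Hodge part and a Weil--Petersson part. The Hodge part is matched with $\omega_{H^\bullet(X)}-\tfrac1{m+1}\omega_{H^\bullet(Y)}$: the relevant fiber integrals are rewritten through the $L^2$-metrics on $H^{p,q}(X)$ and $H^{p,q}(Y)$ --- which is precisely what the $L^2$-correction term in $\tau(X,Y)$ is designed to cancel --- and one invokes the metric-independence identity \eqref{eq-intro-hform}; the weight $-\tfrac1{m+1}$ on the $Y$-contributions is carried through unchanged.

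The Weil--Petersson part comes from the top Chern classes: the fiberwise integral of the $c_n\big(T\mathscr X/S\big)$-term produces $\chi(X)$ times a closed $(1,1)$-form on $S$, and that of the $c_{n-1}\big(T\mathscr Y/S\big)$-term produces $\chi(Y)$ times such a form. Since $Y$ is the zero locus of $\gamma\in H^0(X,K_X^m)$, we have $\mathcal O_{\mathscr X}(\mathscr Y)\cong K_{\mathscr X/S}^m$ and hence, by adjunction, $K_{\mathscr Y/S}\cong K_{\mathscr X/S}^{m+1}\big|_{\mathscr Y}$; with the fiber metric on $K_{\mathscr X/S}$ coming from $\gamma$, both of these $(1,1)$-forms are then identified with $-\omega_{\pi_{\mathscr X},\pi_{\mathscr Y}}=\overline\partial\partial P/(2\pi i)$. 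The adjunction isomorphism is what converts the $Y$-integral into an integral along $\mathscr Y$ and is the origin of the denominator $m+1$ in \eqref{eq-def-w}, while the $\tfrac1{12}$ comes from the Todd genus; collecting these numerical factors together with the curvature $\tfrac{c}{2\pi i}\overline\partial\partial P$ of the $e^{cP}$ factor yields $-w_m(X)\,\omega_{\pi_{\mathscr X},\pi_{\mathscr Y}}$, proving \eqref{eq-intro-thm-curvature}. The main obstacle is precisely this last reorganisation: one must check that all the fiberwise top-Chern-class contributions from $\mathscr X$ and from $\mathscr Y$, which a priori depend on the auxiliary metric on $T\mathscr X/S$, collapse to a single multiple of $\omega_{\pi_{\mathscr X},\pi_{\mathscr Y}}$ with the stated coefficient, every other curvature term cancelling by the anomaly formula for Quillen metrics and by the metric-independence of $\tau(X,Y)$. (The Bismut--Lebeau immersion formula \cite{ble} is not needed for the curvature identity itself; it enters the construction of $\tau(X,Y)$ and the proof of its other properties.)
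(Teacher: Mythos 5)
Your overall strategy coincides with the paper's: the curvature of each Quillen-metric constituent is computed with the Bismut--Gillet--Soul\'e theorem, the alternating sum $\sum_p(-1)^pp\,\mathrm{Td}(TX)\mathrm{ch}(\Lambda^pT^*X)$ is reduced to $-\tfrac{1}{12}c_1c_n$ exactly as in the proof of Theorem \ref{thm-anomaly}, the flat structure on $\lambda_\mathrm{dR}$ produces the Hodge forms $\omega_{H^\bullet(X)}-\tfrac1{m+1}\omega_{H^\bullet(Y)}$, and you are right that the Bismut--Lebeau immersion formula plays no role here.

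The place where your plan diverges from the paper --- and where it has a genuine gap as written --- is the Weil--Petersson term. You propose to endow $K_{\mathscr X/S}$ with the metric induced by $\gamma$ and to read off $-w_m(X)\omega_{\pi_\mathscr{X},\pi_\mathscr{Y}}$ from the fiber integrals of top Chern forms, asserting that the $c_n$-term ``produces $\chi(X)$ times a closed $(1,1)$-form on $S$'' which is then identified with the Weil--Petersson form. Two problems. First, the $\gamma$-induced metric on $K_{\mathscr X/S}^m$ is singular along $\mathscr Y$, so the curvature theorem cannot be applied with it directly; the paper instead keeps a smooth auxiliary K\"ahler metric and inserts the explicit transgression terms $a_X(\gamma,\omega)$, $a_Y(\gamma,\omega)$, $b_Y(\omega)$ of \eqref{eq-def-aX}--\eqref{eq-def-bY}, whose $\overline{\partial}\partial$ is computed by Poincar\'e--Lelong in \eqref{eq-bc1}--\eqref{eq-bc2} and yields both smooth Chern--Weil terms and currents supported on $\mathscr Y$. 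Second, and more importantly, for a smooth metric the fiber integral $\int_{X/S}c_1c_n$ is \emph{not} a multiple of $\chi(X)$ times a form on $S$; in the paper's proof it cancels identically against the corresponding term of $-\tfrac1m a_X(\gamma,\omega)$, and all remaining fiber integrals likewise cancel among the constituents of $\tau(X,\gamma,\omega)$ --- this is exactly the computation proving Theorem \ref{thm-ind-omega}, carried over verbatim to the family. After this cancellation the Weil--Petersson form enters only through the explicitly added normalization $w_m(X)\log\int_X\big|\gamma\overline{\gamma}\big|^{1/m}$, whose $\overline{\partial}\partial/(2\pi i)$ equals $-w_m(X)\omega_{\pi_\mathscr{X},\pi_\mathscr{Y}}$ by the very definition \eqref{eq-intro-wp}; the coefficient $\tfrac{\chi(X)}{12}-\tfrac{\chi(Y)}{12(m+1)}$ is not extracted from a curvature computation but is forced by the scaling law \eqref{eq-mult-z}, where $\chi(X)=\int_Xc_n$ does appear via Gauss--Bonnet. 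So the ``main obstacle'' you flag is not to be overcome by a new identification of fiber integrals with the Weil--Petersson form: it dissolves once the correction terms are differentiated by Poincar\'e--Lelong and the Chern--Weil contributions are allowed to cancel.
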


The proof of Theorem \ref{intro-thm-curvature}
is based on \cite[Theorem 0.1]{bgs1}.

\hfill \\
\noindent\textbf{Relation with Yoshikawa's equivariant BCOV invariant.}
Let $(X,Y)$ be a $(-2)$-Calabi-Yau pair such that
$X$ is a del Pezzo surface and $K_X^{-2}$ is very ample.
Let $(\cdot,\cdot)$ be the intersection form on $\mathrm{Pic}(X)$.
By \cite[Chapter III, 3.4 Proposition]{kol},
if $(K_X,K_X)\geqslant 2$,
then $K_X^{-2}$ is very ample.

Let $X'\rightarrow X$ be the ramified double covering whose branch locus is $Y$.
Then $X'$ is a $2$-elementary K3 surface, i.e.,
$X'$ is a K3 surface equipped with an involution $\iota$ commuting with $X'\rightarrow X$.
Yoshikawa \cite{y04} constructed an equivariant BCOV invariant
for $2$-elementary K3 surfaces
(denoted $\tau_M$ in Yoshikawa's paper).
We denote by $\tau(X',\iota)$
the logarithm of Yoshikawa's equivariant BCOV invariant of $(X',\iota)$.

\begin{thm}
\label{intro-thm-ex}
Let $(X,Y)$ and  $(X',\iota)$ be as above.
We have
\begin{equation}
\tau(X,Y) = - \tau(X',\iota) + \nu(X) \;,
\end{equation}
where $\nu(X)$ is a real number determined by $X$.
\end{thm}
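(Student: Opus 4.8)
The plan is to compare the two invariants by realizing both as combinations of Quillen metrics on the same geometric objects, then using the standard anomaly/curvature formulas together with an initial-value (or degeneration) argument to pin down the difference.

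First I would recall the two defining expressions. On one side, $\tau(X,Y)$ is, by construction, a weighted sum of logarithms of Quillen metrics of the Hodge bundles $H^{p,q}(X)$ and $H^{p,q}(Y)$ together with the Weil--Petersson-type correction governed by $P$ from \eqref{eq0-intro-wp}; in the case $m=-2$, $n=\dim X=2$, these simplify considerably. On the other side, Yoshikawa's equivariant BCOV invariant $\tau(X',\iota)$ of the $2$-elementary K3 surface $X'$ is an equivariant analytic torsion of $(X',\iota)$ normalized by an equivariant Quillen-type metric. The key geometric input is the ramified double cover $p\colon X'\to X$ with branch locus $Y\in|K_X^{-2}|$: this is exactly the situation where the Bismut--Lebeau immersion formula and the equivariant-to-ordinary comparison for Quillen metrics apply. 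The plan is therefore: (i) express $\tau(X',\iota)$ via the $\iota$-decomposition of the Dolbeault cohomology of $X'$ into the $(+1)$-eigenspace (pulled back from $X$) and the $(-1)$-eigenspace (which, after the covering trick, is identified with cohomology on $X$ twisted by $K_X$, hence related to $H^\bullet(Y)$ via the adjunction/residue sequence $0\to\0_X\to\0_X(Y)\to\0_Y(Y)\to 0$ with $\0_X(Y)=K_X^{-2}$); (ii) express $\tau(X,Y)$ the same way; (iii) subtract.

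Concretely, the main steps in order: (1) Set up a family version — put $(X,Y)$ and $(X',\iota)$ into compatible holomorphic families over a base $S$ parametrizing $Y\in|K_X^{-2}|$ with $X$ fixed (or more generally allowing $X$ to move in its deformation class). (2) Apply Theorem \ref{intro-thm-curvature} to compute $\frac{\overline\partial\partial}{2\pi i}\tau(X,Y)$ in terms of $\omega_{H^\bullet(X)}$, $\omega_{H^\bullet(Y)}$, $\omega_{\pi_\mathscr{X},\pi_\mathscr{Y}}$ with $m=-2$, so $w_{-2}(X)=\chi(X)/12+\chi(Y)/12$. (3) Invoke Yoshikawa's curvature formula for $\tau(X',\iota)$ (an equivariant Bismut--Gillet--Soul\'e / BCOV-type anomaly formula), and rewrite its right-hand side on $S$ using the eigenbundle decomposition $H^\bullet(X')=H^\bullet(X)\oplus(\text{twisted piece})$ and the relation between the twisted piece and $H^\bullet(Y)$; match Hodge forms and Weil--Petersson forms term by term. (4) Conclude that $\frac{\overline\partial\partial}{2\pi i}\big(\tau(X,Y)+\tau(X',\iota)\big)=0$ on $S$, i.e.\ $\tau(X,Y)+\tau(X',\iota)$ is pluriharmonic in the family; since the relevant moduli space (del Pezzo of fixed degree, with $K_X^{-2}$ very ample) is connected and the function is real-valued and has at worst logarithmic singularities at the discriminant, it must be constant along the locus of fixed $X$, hence a function $\nu(X)$ of $X$ alone. (5) If one wants $\nu(X)$ explicitly (the statement only asserts existence), evaluate at one convenient member — e.g.\ degenerate $Y$ to a configuration computable by the immersion formula, or use a product/toric model — but for the theorem as stated this is unnecessary.

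The hard part will be step (3): carefully matching Yoshikawa's equivariant normalization constants and the equivariant Hodge/Weil--Petersson contributions against the non-equivariant ones appearing in Theorem \ref{intro-thm-curvature}. This requires (a) a precise dictionary between the $\iota$-invariant and $\iota$-anti-invariant Quillen metrics on $X'$ and the Quillen metrics on $X$ and on the twisted bundle, which is where Bismut--Lebeau enters to control the discrepancy supported on the fixed locus $Y\hookrightarrow X'$; and (b) checking that the coefficient $-1/(m+1)=1$ at $m=-2$ in front of $\omega_{H^\bullet(Y)}$, and the combination $w_{-2}(X)$, are exactly reproduced by the $(-1)$-eigenspace bookkeeping — in particular that the factor-of-$2$ ramification and the shift by $K_X$ conspire correctly. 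A secondary subtlety is ensuring the comparison of Weil--Petersson forms: the $P$-function in \eqref{eq0-intro-wp} for $m=-2$ must be identified with the norm function on $H^0(X',K_{X'})$ (the K3 period) used by Yoshikawa, up to pluriharmonic terms that do not affect $\overline\partial\partial$. Once the curvature identity is established, the rigidity/connectedness argument closing the proof is routine.
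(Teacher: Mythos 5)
Your outline of the curvature comparison is essentially the paper's Steps 1--3: restrict to a generic pencil $\big(Y_s\big)_{s\in S}$ in $\big|K_X^{-2}\big|$, compute $\overline{\partial}\partial\tau(X,Y)$ from Theorem \ref{intro-thm-curvature} with $w_{-2}(X)=(5-g)/4$, rewrite $\omega_{H^\bullet(Y)}$ locally as $\frac{\overline{\partial}\partial}{2\pi i}\log|J(\gamma,\phi_X)|$, and match this against Yoshikawa's curvature formula \cite[(5.15)]{y04} using $r(M)=11-g$ and the identity $2\int_X|\gamma\overline{\gamma}|^{-1/2}=\int_{X'}|\eta\overline{\eta}|$ coming from $\eta^2=f^*\gamma^{-1}$. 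That part of your plan is sound, although the role you assign to Bismut--Lebeau is slightly off: in the paper the immersion formula is not used for an equivariant-to-ordinary dictionary on $X'$, but to prove Proposition \ref{prop-resol}, which re-expresses $\tau(X,Y)$ entirely in terms of Quillen metrics and integrals on $X$ via the resolution $0\to\mathscr{O}_X(K_X^2)\to\mathscr{O}_X\to j_*\mathscr{O}_Y\to 0$; this reformulation is what makes the degeneration analysis possible.

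The genuine gap is in your concluding step (4). You argue that $\tau(X,Y)+\tau(X',\iota)$ is pluriharmonic off the discriminant and ``has at worst logarithmic singularities,'' hence constant. That inference is false: $\log|\Delta(s)|$, with $\Delta$ a defining equation of the discriminant, is pluriharmonic off the discriminant, real-valued, has logarithmic singularities, and is not constant. What is actually needed --- and what constitutes the hardest analytic input of the proof --- is that the logarithmic divergences of the two terms \emph{cancel exactly}: Proposition \ref{prop-asymp-tau} shows $\tau(X,Y_t)=\frac{1}{8}\log|t|^2+\mathscr{O}(\log(-\log|t|))$ as $Y_t$ acquires an ordinary double point, while \cite[Theorem 6.6]{y04} gives $\tau(X_t',\iota_t)=-\frac{1}{8}\log|t|^2+\mathscr{O}(\log(-\log|t|))$. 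Only because the sum is $\mathscr{O}(\log(-\log|t|))=o(\log(1/|t|))$ does the harmonic function extend across the punctures of the compact curve $S$, forcing it to be constant. Establishing the $\frac{1}{8}\log|t|^2$ asymptotics requires Proposition \ref{prop-resol} together with the singularity analysis of each term (the $\beta_{Y_t}$ term contributes $\frac{l}{12}\log|t|^2$, the period term $J(\gamma_t,\phi_X)$ only $\mathscr{O}(\log(-\log|t|))$, etc.); none of this is present in your proposal, and without it the argument does not close.
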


The proof of Theorem \ref{intro-thm-ex}
is based on Theorem \ref{intro-thm-curvature}
and a special case of \cite[Theorem 0.1]{ble}
(see Theorem \ref{thm-quillen-immersion}).

Let $g$ be the genus of the curve $Y$.
By \cite[Theorem 0.1]{ma-yo},
the function
\begin{equation}
(X',\iota) \mapsto \exp \Big(-2^{g}(2^g+1)\tau(X',\iota)\Big)
\end{equation}
on the moduli space of the $2$-elementary K3 surfaces in question
is the product of a Borcherds product \cite{bor} and a Siegel modular form.
By Theorem \ref{intro-thm-ex},
the same result holds for $\tau(X,Y)$
if $X$ admits no deformation,
which holds for $(K_X,K_X) > 5$.
In this case,
the number $\nu(X)$ is uniquely determined by the topological type of $X$.

\hfill \\
\noindent\textbf{Behavior of $\tau(X,Y)$ under blow-up.}
Let $(X,Y)$ be a $1$-Calabi-Yau pair.
Let $Z \subseteq X$ be a closed complex submanifold of codimension $2$
such that $Z \cap Y = \emptyset$.
Let $f: X' \rightarrow X$ be the blow-up along $Z$.
Set $Y'=f^{-1}(Y \cup Z)\subseteq X'$.
Then $(X',Y')$ is a $1$-Calabi-Yau pair.
We are interested in the value of
\begin{equation}
\label{eq-intro-blq}
\tau(X',Y') - \tau(X,Y) \;.
\end{equation}
Here the technical conditions $Z\cap Y = \emptyset$ and $\mathrm{codim} Z=2$
are due to our hypothesis that the canonical divisor $Y'$ is smooth and reduced.

\begin{thm}
\label{intro-thm-bl}
There exists $\nu\in\R$ such that
for any $X,Y,Z,X',Y'$ as above with $\dim X = 2$
and $Z$ being a single point,
we have
\begin{equation}
\tau(X',Y') - \tau(X,Y) = \nu \;.
\end{equation}
\end{thm}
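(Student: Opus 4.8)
The plan is to reduce the statement to an application of the curvature formula in Theorem~\ref{intro-thm-curvature}, exploiting that the target quantity is locally constant and that the configurations in question are connected through a universal local model. First I would set up a single parameter space: given any surface pair $(X,Y)$ with a point $Z=\{z\}\notin Y$, the blow-up $f\colon X'\to X$ at $z$ produces $Y'=f^{-1}(Y)+2E$ where $E$ is the exceptional divisor (since $K_{X'}=f^*K_X+E$ and $Y\in|K_X|$, one has $Y'\in|K_{X'}|$, and the extra factor of $2$ accounts for $f^{-1}(Y\cup Z)$ being the reduced total transform; one must check this divisor is smooth and reduced, which holds because $Z\cap Y=\emptyset$). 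The key observation is that $\tau(X',Y')-\tau(X,Y)$ depends only on the ``germ'' of the situation near $z$, so it suffices to show this difference is constant on a connected family.

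The main step is to show the function $(X,Y,z)\mapsto\tau(X',Y')-\tau(X,Y)$ is locally constant. For this I would form a holomorphic family over a suitable base $S$: take $\pi_\mathscr{X}\colon\mathscr{X}\to S$ a locally K\"ahler family of surfaces with a section $S\to\mathscr{X}$ picking out $z_s$, and a relative divisor $\mathscr{Y}\subseteq\mathscr{X}$ disjoint from the section with $Y_s\in|K_{X_s}|$; simultaneously form the fiberwise blow-up $\mathscr{X}'\to S$ along the section with its relative divisor $\mathscr{Y}'$. Applying Theorem~\ref{intro-thm-curvature} to both families and subtracting, I get
\begin{equation}
\frac{\overline\partial\partial}{2\pi i}\big(\tau(X',Y')-\tau(X,Y)\big)
=\big(\omega_{H^\bullet(X')}-\omega_{H^\bullet(X)}\big)
-\frac{1}{2}\big(\omega_{H^\bullet(Y')}-\omega_{H^\bullet(Y)}\big)
-\big(w_1(X')-w_1(X)\big)\,\omega' \;,
\end{equation}
where $m+1=2$ and $\omega,\omega'$ are the Weil--Petersson forms of the two families. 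Now blow-up of a surface at a point adds a single $(1,1)$-class and leaves $h^{2,0},h^{0,2},h^{0,0},h^{2,2}$ unchanged, so the Hodge bundle $H^\bullet(X')$ differs from $H^\bullet(X)$ only by a trivial summand; hence $\omega_{H^\bullet(X')}=\omega_{H^\bullet(X)}$. Because $Z\cap Y=\emptyset$, the curve $Y'$ is isomorphic to $Y$ (the blow-up is an isomorphism near $Y$), so $\omega_{H^\bullet(Y')}=\omega_{H^\bullet(Y)}$ and also $\chi(Y')=\chi(Y)$. Finally $\chi(X')=\chi(X)+1$, so $w_1(X')-w_1(X)=\tfrac{1}{12}$. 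It remains to see that the Weil--Petersson form $\omega'$ vanishes: the canonical sections $\gamma_s$ and $\gamma_s'=f_s^*\gamma_s\cdot(\text{section of }E^{\otimes2})$ have essentially the same period integral $\int|\gamma\overline\gamma|^{1/1}$ up to the blow-up being volume-preserving away from a measure-zero set, so the function $P$ in~\eqref{eq0-intro-wp} is the same (up to pluriharmonic terms) for both families and $\omega_{\pi_{\mathscr{X}'},\pi_{\mathscr{Y}'}}=\omega_{\pi_\mathscr{X},\pi_\mathscr{Y}}$; more carefully, one shows $\tau(X',Y')-\tau(X,Y)$ has vanishing $\overline\partial\partial$, hence is pluriharmonic, hence (being real and the base being arbitrary, in particular one may take $S$ compact or argue by a maximum principle on a compactification) locally constant.

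Once local constancy is established, the conclusion follows by connectedness: the space of data $(X,Y,z)$ with $\dim X=2$, $Z$ a point, is connected — indeed one can degenerate any such $X$ to, say, a fixed rational or abelian surface through a chain of families, dragging $Y\in|K_X|$ and the point $z$ along, staying within the class of $1$-Calabi-Yau pairs with $Z\cap Y=\emptyset$. Therefore $\tau(X',Y')-\tau(X,Y)$ takes a single value $\nu\in\R$ on the whole space, which is exactly the claim.

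The hard part will be making the vanishing of the relative Weil--Petersson form $\omega_{\pi_{\mathscr{X}'},\pi_{\mathscr{Y}'}}-\omega_{\pi_\mathscr{X},\pi_\mathscr{Y}}$ rigorous, i.e.\ showing that the two period functions $P$ in~\eqref{eq0-intro-wp} differ by a pluriharmonic function; this requires a careful comparison of $\int_{X_s}|\gamma_s\overline{\gamma_s}|$ with $\int_{X_s'}|\gamma_s'\overline{\gamma_s'}|$ under the blow-up $f_s$, keeping track of the contribution of the exceptional divisor to the canonical section. A secondary difficulty is verifying that one can indeed connect all the relevant configurations by holomorphic families while preserving the smoothness and reducedness of $Y$ and the disjointness $Z\cap Y=\emptyset$; here the restriction to $\dim X=2$ with $Z$ a point keeps the combinatorics of $Y'=f^{-1}(Y)+2E$ manageable and should make the connectedness argument elementary.
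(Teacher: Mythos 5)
There are several genuine gaps here, one of which is fatal to the whole strategy. First, a bookkeeping error that breaks your curvature computation: $Y'=f^{-1}(Y\cup Z)$ \emph{contains the exceptional divisor} $E\simeq{\C}P^1$, so $Y'\simeq Y\sqcup E$ is not isomorphic to $Y$. Hence $\chi(Y')=\chi(Y)+2$ (and $\mathrm{Div}(f^*\gamma)=f^{-1}(Y)+E$ with multiplicity one, not $2E$), and with \eqref{eq-def-w} one gets $w_1(X')=w_1(X)$, not $w_1(X')-w_1(X)=\tfrac{1}{12}$. Your version leaves a term $\tfrac{1}{12}\,\omega_{\mathrm{WP}}$ that you then try to kill by claiming the Weil--Petersson form itself vanishes; it does not vanish for a nontrivial family of pairs. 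What is true --- and what the paper's Proposition \ref{prop-bl} actually uses --- is only that the two Weil--Petersson forms of $(X,Y)$ and $(X',Y')$ coincide, which suffices precisely because the coefficient $w_1(X')-w_1(X)$ is zero once $E\subseteq Y'$ is accounted for (the extra Hodge-form contributions of $E$ and of the shifted summand $H(Z)[1]^\bullet$ also cancel, by Proposition \ref{prop-shift-hodgeform}).

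The deeper problems are in the passage from the curvature identity to the conclusion. A real function with $\overline{\partial}\partial u=0$ is pluriharmonic, not constant, and no maximum principle on a compactification is available: the paper's own asymptotic analysis (Proposition \ref{prop-asymp-tau}) shows that such BCOV-type quantities typically blow up like $\log|t|^2$ at the discriminant, so controlling the boundary behavior is itself a hard theorem, not a remark. Worse, the space of configurations $(X,Y,z)$ is \emph{not} connected through holomorphic families: all fibers of a smooth family are diffeomorphic, so a K3 surface with $Y=\emptyset$ and a minimal general-type surface with $Y$ a canonical curve lie in different components, and your argument would at best yield one constant per deformation class rather than a single $\nu$. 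The paper's proof takes a completely different, local route that sidesteps both issues: by Grauert's theorem a neighborhood of $Z$ is biholomorphic to a neighborhood of the zero section of $N_Z$ in $W=\mathbb{P}(N_Z\oplus\C)$, and Bismut's blow-up comparison for Quillen metrics (Theorem \ref{thm-quillen-bl}, Corollary \ref{cor-quillen-bl}), together with a term-by-term comparison of the anomaly corrections $a_\cdot$, $b_\cdot$, shows that $\tau(X',Y')-\tau(X,Y)$ equals the corresponding difference for this universal local model, computed via the auxiliary invariant of \textsection 2.2 for the meromorphic canonical form $\gamma_Z$. Since the local model depends on nothing, this defines $\nu$ once and for all. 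Some such localization is unavoidable; the global deformation argument cannot produce a single constant.
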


A curve in a threefold is called a $(-1,-1)$-curve if
\begin{itemize}
\item[-] it is isomorphic to ${\C}P^1$;
\item[-] its normal bundle is isomorphic to the sum of two line bundles of degree $-1$.
\end{itemize}

\begin{thm}
\label{intro-thm-bl-curve}
There exists $\nu\in\R$ such that
for any $X,Y,Z,X',Y'$ as above with $\dim X = 3$
and $Z$ being a $(-1,-1)$-curve,
we have
\begin{equation}
\tau(X',Y') - \tau(X,Y) = \nu \;.
\end{equation}
\end{thm}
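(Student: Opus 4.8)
The plan is to combine Theorem~\ref{intro-thm-curvature} with the immersion formula of Theorem~\ref{thm-quillen-immersion}, in the same spirit as the surface statement Theorem~\ref{intro-thm-bl}.

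\emph{Step 1 (local constancy in families).}
Put the configuration in a family: choose a locally K\"ahler holomorphic fibration $\pi_\mathscr{X}:\mathscr{X}\rightarrow S$, a relative smooth divisor $\mathscr{Y}\subseteq\mathscr{X}$, and a relative $(-1,-1)$-curve $\mathscr{Z}\subseteq\mathscr{X}$ with $\mathscr{Z}\cap\mathscr{Y}=\emptyset$, realizing $(X,Y,Z)$ at one point of $S$. Let $f:\mathscr{X}'\rightarrow\mathscr{X}$ be the blow-up along $\mathscr{Z}$ and set $\mathscr{Y}'=f^{-1}(\mathscr{Y}\cup\mathscr{Z})$; fibrewise this is $(X',Y')$, and one checks the hypotheses of Theorem~\ref{intro-thm-curvature} hold: blowing up a smooth curve preserves local K\"ahlerness, $\mathscr{Y}'=\widetilde{\mathscr{Y}}\sqcup\mathscr{E}$ with $\widetilde{\mathscr{Y}}\cong\mathscr{Y}$ and $\mathscr{E}=\mathbb{P}(N_{\mathscr{Z}/\mathscr{X}})$, and $(X'_s,Y'_s)$ is a $1$-Calabi-Yau pair since $K_{X'_s}\cong f^*K_{X_s}\otimes\mathcal{O}(E_s)$ identifies $[\widetilde{Y}_s]+[E_s]$ with $c_1(K_{X'_s})$. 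Now apply Theorem~\ref{intro-thm-curvature} to both families and subtract. Each term cancels: since $f$ is birational, $H^\bullet(X'_s)=H^\bullet(X_s)\oplus H^{\bullet-2}(Z_s)(-1)$ with the extra summand of Tate type, so $\omega_{H^\bullet(X')}=\omega_{H^\bullet(X)}$; because $Z_s$ is a $(-1,-1)$-curve we have $N_{Z_s/X_s}\cong\mathcal{O}(-1)\oplus\mathcal{O}(-1)$ and hence $E_s\cong{\C}P^1\times{\C}P^1$, whose cohomology is of Tate type, so $H^\bullet(Y'_s)=H^\bullet(Y_s)\oplus H^\bullet(E_s)$ gives $\omega_{H^\bullet(Y')}=\omega_{H^\bullet(Y)}$; from $\chi(X')=\chi(X)+2$ and $\chi(Y')=\chi(Y)+4$ one computes $w_1(X')=w_1(X)$; and a local holomorphic section $\gamma_s$ of $K_{X_s}$ with zero locus $Y_s$ pulls back to a holomorphic $n$-form on $X'_s$ with divisor $\widetilde{Y}_s+E_s=Y'_s$ (the pullback of a volume form acquires a simple zero along $E_s$), while $\int_{X'_s}\big|\gamma'_s\overline{\gamma'_s}\big|=\int_{X_s}\big|\gamma_s\overline{\gamma_s}\big|$ because $f$ has degree one, so the Weil--Petersson forms agree. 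Therefore $\frac{\overline{\partial}\partial}{2\pi i}\big(\tau(X',Y')-\tau(X,Y)\big)=0$, i.e. $s\mapsto\tau(X'_s,Y'_s)-\tau(X_s,Y_s)$ is pluriharmonic, hence locally constant whenever $S$ is compact.

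\emph{Step 2 (from locally constant to universal).}
Local constancy alone does not yield a single $\nu$, since the space of all such $(X,Y,Z)$ need not be connected, so I would compute $\tau(X',Y')-\tau(X,Y)$ directly. Recall that the BCOV invariant is an alternating product of Quillen metrics on the determinant lines $\lambda(\Omega^p_{X'})$, $0\leqslant p\leqslant 3$. Using the blow-up exact sequences relating $\Omega^p_{X'}$, $f^*\Omega^p_X$ and locally free sheaves on $E$ pulled back from $Z$, together with $Rf_*f^*=\mathrm{id}$, one reduces the Quillen metric on $\lambda(\Omega^p_{X'})$ to that on $\lambda(\Omega^p_X)$ --- the change of $L^2$-metric induced by $f$ being governed by the Bismut--Gillet--Soul\'e anomaly formula \cite{bgs1} --- plus a contribution localized on $E$, which the immersion formula of Theorem~\ref{thm-quillen-immersion} evaluates in terms of Bismut--Lebeau/Bott--Chern secondary forms on $E$ and a Gillet--Soul\'e $R$-genus term in $N_{Z/X}$; the disjoint component $\widetilde{Y}\cong Y$ adds nothing beyond the analogous contribution of $Y\hookrightarrow X$. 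Assembling the alternating combination that defines $\tau$, the genuinely non-local pieces (the torsions of $X$ and of $Y$) cancel between $\tau(X',Y')$ and $\tau(X,Y)$, leaving only quantities built from the local geometry of the blow-up. Since $Z\cong{\C}P^1$ and $N_{Z/X}\cong\mathcal{O}(-1)\oplus\mathcal{O}(-1)$ for every $(-1,-1)$-curve --- so that $E\cong{\C}P^1\times{\C}P^1$, $N_{E/X'}\cong\mathcal{O}_E(-1)$, and all restricted cotangent sheaves are fixed up to canonical isomorphism --- this residual quantity is one and the same real number $\nu$, independent of $(X,Y,Z)$.

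The hard part is the bookkeeping in Step~2: one must apply the immersion formula to all four sheaves $\Omega^p_{X'}$ with compatible resolutions, keep careful track of the $L^2$-metrics (which depend on the auxiliary K\"ahler metric chosen on $X'$, but whose ratio to the metrics pulled back from $X$ is controlled by the anomaly formula), and check that precisely the non-universal contributions cancel in the alternating sum defining $\tau$, so that only the universal local terms survive. The hypotheses $\dim X=3$ and ``$Z$ a $(-1,-1)$-curve'' are exactly what make these identifications and this cancellation work; the corresponding point-blow-up computation on surfaces is Theorem~\ref{intro-thm-bl}, which serves as the model case.
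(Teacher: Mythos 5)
Your Step 1 is essentially the paper's Proposition \ref{prop-bl}: the curvature formula of Theorem \ref{intro-thm-curvature} plus the cohomological bookkeeping of the blow-up shows that $\tau(X',Y')-\tau(X,Y)$ is pluriharmonic in families, and you correctly observe that this cannot produce a single universal $\nu$. The gap is in Step 2, and it is a real one. The comparison of the Quillen metrics on $\det H^\bullet(X,\Omega^p_X)$ and $\det H^\bullet(X',f^*\Omega^p_X)$ under the canonical isomorphism is \emph{not} governed by the Bismut--Gillet--Soul\'e anomaly formula: that formula compares Quillen metrics for two K\"ahler metrics on one and the same fibration, whereas here one must cross the birational morphism $f$, which contracts the exceptional divisor. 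There is no explicit closed formula for this blow-up defect. What is available is Bismut's theorem \cite[Theorem 8.10]{b97} (Theorem \ref{thm-quillen-bl} in the paper), which asserts only that the defect is \emph{local}: it coincides for two blow-ups whose centers have isometric, holomorphically identified neighborhoods. Your plan to ``evaluate'' the residual contribution by the immersion formula and conclude that ``only universal local terms survive'' therefore cannot be carried out as stated; the immersion formula handles the $\eta^f_p$ factor in \eqref{eq-sigma-fg-p} but says nothing about the $\tau^f_p$ factor, which is exactly the non-explicit piece.

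Because only locality is available, one still has to exhibit a \emph{canonical local model} against which to compare, and this is where most of the paper's work lies. The paper sets $W=\mathbb{P}(N_Z\oplus\C)$ with the blow-up $g:W'\to W$ along the zero section and defines $\nu=\tau(W',g^*\gamma_Z)-\tau(W,\gamma_Z)$; since every $(-1,-1)$-curve has the same $W$, $W'$, $g$ and $[\gamma_Z]$ up to isomorphism, this $\nu$ is manifestly universal. Three ingredients are then needed that your proposal does not supply: (i) Grauert's theorem (Proposition \ref{prop-a}) to know that a neighborhood of $Z$ in $X$ is biholomorphic to a neighborhood of the zero section of $N_Z$, so that the locality statement applies; (ii) an extension of the invariant to \emph{meromorphic} canonical forms, since $\gamma_Z$ viewed on $W$ has divisor $-3\,\mathbb{P}(N_Z)$ (this is the auxiliary invariant \eqref{eq-def-tau-omega-aux}); and (iii) the rescaling argument \eqref{eq47-pf-intro-thm-bl-curve}--\eqref{eq48-pf-intro-thm-bl-curve} to remove the dependence on the ambiguity $\varphi_*\gamma=e^\alpha\gamma_Z$ in identifying the canonical form of $X$ with that of the model. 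Without these, the assertion at the end of your Step 2 that ``this residual quantity is one and the same real number $\nu$'' is not justified.
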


The key step in the proof of Theorem \ref{intro-thm-bl}, \ref{intro-thm-bl-curve}
is an application of \cite[Theorem 8.10]{b97}
(see Theorem \ref{thm-quillen-bl} and Corollary \ref{cor-quillen-bl}).

Let $X$ be a Calabi-Yau threefold.
We assume that there is a $(-1,-1)$-curve $Z \subseteq X$.
Let $f: X' \rightarrow X$ be the blow-up along $Z$.
Set $D = f^{-1}(Z) \subseteq X'$.
We have $D \simeq {\C}P^1 \times {\C}P^1$.
Let
\begin{equation}
\mathrm{pr}_1,\mathrm{pr}_2: {\C}P^1 \times {\C}P^1 \rightarrow {\C}P^1
\end{equation}
be the projections to the first and the second component.
We identify $D$ with ${\C}P^1 \times {\C}P^1$
such that $f\big|_D = \mathrm{pr}_1$.
There exists a blow-down
\begin{equation}
g: X' \rightarrow X''
\end{equation}
such that $g\big|_{X'\backslash D}$ is biholomorphic and $g\big|_D = \mathrm{pr}_2$.
The birational map
\begin{equation}
g \circ f^{-1}: X \dashrightarrow X''
\end{equation}
is called an Atiyah flop.
Here $X''$ is Calabi-Yau
and $g(D)\subseteq X''$ is a $(-1,-1)$-curve.

\begin{cor}
\label{intro-thm-flop}
Let $X \dashrightarrow X''$ be an Atiyah flop between
Calabi-Yau K{\"a}hler threefolds.
We have
\begin{equation}
\tau(X,\emptyset) = \tau(X'',\emptyset) \;.
\end{equation}
\end{cor}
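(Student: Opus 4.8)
The plan is to obtain Corollary~\ref{intro-thm-flop} from two applications of Theorem~\ref{intro-thm-bl-curve}, using the fact that the intermediate variety $X'$ of the Atiyah flop is simultaneously the blow-up of $X$ along the $(-1,-1)$-curve $Z$ and the blow-up of $X''$ along the $(-1,-1)$-curve $g(D)$, with the \emph{same} exceptional divisor $D\simeq{\C}P^1\times{\C}P^1$. First I would observe that, $X$ being Calabi-Yau, its canonical bundle $K_X$ is holomorphically trivial, so the nowhere-vanishing constant section of $K_X\cong\0_X$ exhibits $(X,\emptyset)$ as a $1$-Calabi-Yau pair; likewise $(X'',\emptyset)$ is a $1$-Calabi-Yau pair, and by hypothesis both $X$ and $X''$ are K{\"a}hler, while $X'$, being the blow-up of the compact K{\"a}hler manifold $X$ along the smooth curve $Z$, is K{\"a}hler as well. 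Thus all the standing hypotheses of Theorem~\ref{intro-thm-bl-curve} are available.

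Next I would run Theorem~\ref{intro-thm-bl-curve} for the blow-up $f\colon X'\to X$ along $Z$: since $Z\cap\emptyset=\emptyset$ and $\mathrm{codim}\,Z=2$, the associated divisor is $Y'=f^{-1}(\emptyset\cup Z)=D$, and the theorem gives
\[
\tau(X',D)-\tau(X,\emptyset)=\nu
\]
with $\nu$ the universal real constant appearing in that theorem. For the second application I would first check that $g\colon X'\to X''$ is a blow-up along the smooth center $Z'':=g(D)$ in the sense required: since $Z''$ is a $(-1,-1)$-curve one has $N_{Z''/X''}\cong\0(-1)\oplus\0(-1)$, hence its projectivization is ${\C}P^1\times{\C}P^1$ with bundle projection matching $g\big|_D=\mathrm{pr}_2$, which identifies $g$ with the monoidal transformation centered at $Z''$ and yields $g^{-1}(\emptyset\cup Z'')=D$. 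Applying Theorem~\ref{intro-thm-bl-curve} once more, now to $(X'',\emptyset)$ and the blow-up $g$, gives
\[
\tau(X',D)-\tau(X'',\emptyset)=\nu
\]
with the \emph{same} $\nu$. Subtracting the two identities, the constant cancels and $\tau(X,\emptyset)=\tau(X'',\emptyset)$, as claimed.

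The only point deserving real care — there is no analytic difficulty remaining, since everything hard has been absorbed into Theorem~\ref{intro-thm-bl-curve} — is the bookkeeping that forces the two constants to coincide: it is essential that $\nu$ in Theorem~\ref{intro-thm-bl-curve} is a single universal number independent of the triple $(X,Y,Z)$, so that $\tau(X',D)$ differs from $\tau(X,\emptyset)$ and from $\tau(X'',\emptyset)$ by exactly the same amount. A secondary point is to confirm that $g$ genuinely qualifies as a blow-up along a $(-1,-1)$-curve, as demanded by the hypotheses of Theorem~\ref{intro-thm-bl-curve}; this is precisely the standard local description of the Atiyah flop recalled just before the corollary, and once $N_{Z''/X''}$ has been identified with $\0(-1)\oplus\0(-1)$ there is nothing further to do.
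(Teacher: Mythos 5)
Your proposal is correct and coincides with the paper's own argument: the paper likewise applies Theorem~\ref{intro-thm-bl-curve} to both contractions $f\colon X'\to X$ and $g\colon X'\to X''$, obtaining $\tau(X,\emptyset)=\tau(X',D)-\nu=\tau(X'',\emptyset)$ with the same universal constant $\nu$. The extra verifications you include (triviality of $K_X$ making $(X,\emptyset)$ a $1$-Calabi-Yau pair, and the identification of $g$ as the blow-down along the $(-1,-1)$-curve $g(D)$) are exactly the hypotheses the paper's setup already guarantees, so nothing is missing.
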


The proof of Corollary \ref{intro-thm-flop} is immediate:
let $\nu\in\R$ be as in Theorem \ref{intro-thm-bl-curve},
then both $\tau(X,\emptyset)$ and $\tau(X'',\emptyset)$
are equal to $\tau(X',D) - \nu$.

By Corollary \ref{intro-thm-flop},
the conjecture \cite[Conjecture 2.1]{yo06} holds for Atiyah flops.

\begin{rem}
\label{rem-nami}
We remark that
an Atiyah flop of a projective Calabi-Yau threefold
may fail to be projective.
Readers may find examples and counterexamples from a paper of Namikawa \cite{nami}.
There Namikawa considered a projective Calabi-Yau threefold $X$,
which is the fiber product of two rational elliptic surfaces.
The Calabi-Yau threefold $X$ contains a large number of $(-1,-1)$-curves.
Taking the Atiyah flop of several of them,
we may get either a projective Calabi-Yau threefold or a non projective one.
\end{rem}

This paper is organized as follows.
In \textsection \ref{sect-pre},
we introduce several fundamental notions and constructions.
In \textsection \ref{sect-bcov},
we construct the BCOV invariant $\tau(X,Y)$
and establish Theorem \ref{intro-thm-curvature}.
In \textsection \ref{sect-ex},
we establish Theorem \ref{intro-thm-ex}.
In \textsection \ref{sect-bl},
we establish Theorem \ref{intro-thm-bl}, \ref{intro-thm-bl-curve}
together with a weak result about \eqref{eq-intro-blq} in arbitrary dimension.

\hfill\\
\noindent\textbf{Notations.}
For a complex vector space $V$,
we denote $\det V = \Lambda^{\dim V} V$,
which is a complex line.
For a complex line $\lambda$,
we denote by $\lambda^{-1}$ the dual of $\lambda$.
For a graded complex vector space $V^\bullet = \bigoplus_{k=0}^m V^k$,
we denote $\det V^\bullet = \bigotimes_{k=0}^m \big(\det V^k\big)^{(-1)^k}$.

For $p,q\in\N$ and a complex vector bundle $F$ over a complex manifold $S$,
we denote by $\Omega^{p,q}(S,F)$ (resp. $A^{p,q}(S,F)$)
the vector space of $(p,q)$-forms (resp. $(p,q)$-current)
on $S$ with values in $F$.
We denote $\Omega^{p,q}(S)=\Omega^{p,q}(S,\C)$
(resp. $A^{p,q}(S)=A^{p,q}(S,\C)$).
For a differential form (resp. current) $\omega$ on $S$,
its component of degree $(p,q)$ is denoted by
$\big\{\omega\big\}^{(p,q)}$.

For a holomorphic vector bundle $E$ over a complex manifold $S$,
we denote by $\mathscr{O}_S(E)$
the analytic coherent sheaf of holomorphic sections of $E$.
We denote $\mathscr{O}_S = \mathscr{O}_S(\C)$.
For $p\in\N$,
we denote by $\Omega^p_S$
the analytic coherent sheaf of holomorphic $p$-forms on $S$.
We denote $\Omega^p_S(E) = \Omega^p_S \otimes_{\mathscr{O}_S} \mathscr{O}_S(E)$.

For $q\in\N$ and an analytic coherent sheaf $\mathscr{F}$ on a complex manifold $S$,
we denote by $H^q(S,\mathscr{F})$
the $q$-th cohomology of $\mathscr{F}$.
For a holomorphic vector bundle $E$ over  $S$,
we denote $H^q(S,E) = H^q(S,\mathscr{O}_S(E))$.
If $H^0(S,E)\neq 0$,
we denote $\big|E\big| = \mathbb{P}\big(H^0(S,E)\big)$.
We denote by $\mathscr{M}(S,E)$ the vector space of meromorphic sections of $E$.

For $k\in \N$ and a complex manifold $S$,
we denote by $H^k_\mathrm{dR}(S)$ the $k$-th de Rham cohomology of $S$
with coefficients in $\C$.
For $p,q\in \N$,
we denote $H^{p,q}(S) = H^q(S,\Omega^p_S)$.
If $S$ is a compact K{\"a}hler manifold,
we identify $H^{p,q}(S)$ with a sub vector space of $H^{p+q}_\mathrm{dR}(S)$
via the Hodge theory.

\hfill\\
\noindent\textbf{Acknowledgments.}
The author is grateful to Professor Ken-Ichi Yoshikawa
who is the author's postdoctoral advisor
and a member of the author's dissertation committee.
Prof. Yoshikawa drew the author's attention to the BCOV invariant
and suggested the author to
study the special case $m=-2$.

The author is grateful to Professor Jean-Pierre Demailly
who taught the author to prove Proposition \ref{prop-a}.

The author is grateful to Professor Yoshinori Namikawa
who taught the author the example given in Remark \ref{rem-nami}.

The author is grateful to Professor Xianzhe Dai and Professor Vincent Maillot
for many helpful suggestions.

The author is grateful to Doctor Yang Cao
who was the author's neighbor.
Dr. Cao taught the author a lot about algebraic geometry
and related topics.

This paper combines the preprints \cite{z2,z1}.

This work was supported by JSPS KAKENHI Grant Number JP17F17804.

This work was supported by KIAS individual Grant MG077401 at Korea Institute for Advanced Study.

\section{Preliminary}
\label{sect-pre}

\subsection{Chern form and Bott-Chern form}
\label{subsect-bc}

Let $S$ be a complex manifold.
Let $E$ be a holomorphic vector bundle over $S$.
Let $g^E$ be a Hermitian metric on $E$.
Let
\begin{equation}
R^E \in \Omega^{1,1}(S,\mathrm{End}(E))
\end{equation}
be the Chern curvature of $(E,g^E)$.
For $k\in\N$,
we denote by $c_k$ the $k$-th elementary symmetric polynomial.
The $k$-th Chern form of $(E,g^E)$ is defined by
\begin{equation}
c_k(E,g^E) := c_k \Big(-\frac{R^E}{2\pi i}\Big) \in \Omega^{k,k}(S) \;.
\end{equation}
The $k$-th Chern class of $E$ is defined by
\begin{equation}
c_k(E) := \big[c_k(E,g^E)\big] \in H^{2k}_\mathrm{dR}(S) \;,
\end{equation}
which is independent of $g^E$.

We denote
\begin{equation}
c(E,g^E) = 1 + c_1(E,g^E) + c_2(E,g^E) +...
\in \bigoplus_{k\in\N} \Omega^{k,k}(S) \;.
\end{equation}
The total Chern class of $E$ is defined by
\begin{equation}
c(E):=\big[c(E,g^E)\big]\in H^\mathrm{even}_\mathrm{dR}(S) \;.
\end{equation}
For a short exact sequence of holomorphic vector bundles over $S$,
\begin{equation}
0 \rightarrow E' \rightarrow E \rightarrow E'' \rightarrow 0 \;,
\end{equation}
we have
\begin{equation}
c(E) = c(E')c(E'') \;.
\end{equation}
Let $g^E$ be a Hermitian metric on $E$.
Let $g^{E'}$ be the Hermitian metric on $E'$
induced by $g^E$ via the embedding $E'\rightarrow E$.
Let $g^{E''}$ be the quotient Hermitian metric on $E''$
induced by $g^E$ via the surjection $E\rightarrow E''$.
The Bott-Chern form \cite[Section 1f)]{bgs1}
\begin{equation}
\widetilde{c}(E',E,g^E) \in
\bigoplus_{k\in\N}
\frac{\Omega^{k,k}(S)}{\partial\Omega^{k-1,k}(S)+\overline{\partial}\Omega^{k,k-1}(S)}
\end{equation}
is such that
\begin{equation}
\label{eq-def-BC}
\frac{\overline{\partial}\partial}{2\pi i} \widetilde{c}(E',E,g^E)
= c(E,g^E) - c(E',g^{E'})c(E'',g^{E''}) \;.
\end{equation}

\subsection{Hodge form}
\label{subsect-hodge}

Let $S$ be a complex manifold.
Let $H^\bullet_\Z = \bigoplus_{k=0}^n H^k_\Z$
be a local system of finitely generated graded $\Z$-module over $S$.
We denote $H^\bullet_\C = H^\bullet_\Z \otimes_\Z \C$,
which is a graded flat complex vector bundle over $S$.

For $k=0,\cdots,n$,
let
\begin{equation}
\label{eq-filt-hoge}
H^k_\C = F^0 H^k_\C
\supseteq F^1 H^k_\C
\supseteq \cdots
\supseteq F^k H^k_\C
\supseteq F^{k+1}H^k_\C = 0
\end{equation}
be a filtration by holomorphic sub vector bundles.
We assume that there exists a decomposition
by smooth complex sub vector bundles
\begin{equation}
\label{eq-sum-hoge}
H^k_\C = \bigoplus_{0\leqslant p,q\leqslant k, p+q=k} H^{p,q}_\C
\end{equation}
such that
\begin{equation}
\label{eq2-sum-hoge}
F^r H^k_\C = \bigoplus_{p=r}^k H^{p,k-p}_\C \;,\hspace{5mm}
\overline{H^{p,q}_\C} = H^{q,p}_\C \;.
\end{equation}
We remark that
\begin{equation}
\label{eq3-sum-hoge}
H^{p,q}_\C = F^p H^{p+q}_\C \cap \overline{F^q H^{p+q}_\C} \;.
\end{equation}
As a consequence,
the decomposition \eqref{eq-sum-hoge}
is uniquely determined by the filtration \eqref{eq-filt-hoge}.
Moreover, the identification
\begin{equation}
\label{eq4-sum-hoge}
H^{p,q}_\C = F^pH^{p+q}_\C / F^{p+1}H^{p+q}_\C
\end{equation}
induces a holomorphic structure on $H^{p,q}_\C$.
We call $H^\bullet := (H^\bullet_\Z, F^\bullet H^\bullet_\C)$
a variation of Hodge structure over $S$.
This definition of variation of Hodge structure
is weaker than the usual one,
which requires Griffiths transversality (cf. \cite[Definition 7.3.4]{cat}).

Set
\begin{equation}
\label{eq-def-lambda-lambdadR}
\lambda = \bigotimes_{0\leqslant p,q\leqslant n}
\Big(\det H^{p,q}_\C\Big)^{(-1)^{p+q}p} \;,\hspace{5mm}
\lambda_\mathrm{dR} = \bigotimes_{k=1}^n
\Big(\det H^k_\C\Big)^{(-1)^kk} \;.
\end{equation}
Then $\lambda$ (resp. $\lambda_\mathrm{dR}$) is
a holomorphic (resp. flat) line bundle over $S$.
By \eqref{eq-sum-hoge},
the second identity in \eqref{eq2-sum-hoge}
and \eqref{eq-def-lambda-lambdadR},
we have
\begin{equation}
\label{eq-lambda-lambdadR}
\lambda_\mathrm{dR} = \lambda \otimes \overline{\lambda} \;.
\end{equation}
The identity \eqref{eq-lambda-lambdadR} appeared in a paper of Kato \cite[\textsection 1]{kato}.

Let $U\subseteq S$ be a small open subset.
Let $\tau\in H^0(U,\lambda)$ be a nowhere vanishing holomorphic section.
Let $\sigma\in \smooth(U,\lambda_\mathrm{dR})$ be a non-zero constant section.
By \eqref{eq-lambda-lambdadR},
there exists $f\in\smooth(U,\C)$ such that
\begin{equation}
\label{eq-def-f}
\sigma = e^f \tau \otimes \overline{\tau} \;.
\end{equation}
Then $\overline{\partial}\partial\,\mathrm{Re} f \in \Omega^{1,1}(U)$
is independent of $\tau$ and $\sigma$.
The Hodge form $\omega_{H^\bullet}\in\Omega^{1,1}(S)$ is defined by
\begin{equation}
\label{eq-def-hodgeform}
\omega_{H^\bullet}\big|_U = \frac{\overline{\partial}\partial\,\mathrm{Re} f}{2\pi i} \;.
\end{equation}

Let $g^{H^\bullet_\C}$ be a Hermitian metric on $H^\bullet_\C$ such that
\begin{align}
\label{eq-metric-vhs}
\begin{split}
g^{H^\bullet_\C}(u,v) = 0  & \;,
\hspace{5mm} \text{for } u\in H^{p,q}_\C, v\in H^{p',q'}_\C
\text{ with } (p,q)\neq(p',q') \;, \\
g^{H^\bullet_\C}(u,u) = g^{H^\bullet_\C}(\overline{u},\overline{u}) & \;,
\hspace{5mm} \text{for } u\in H^\bullet_\C \;.
\end{split}
\end{align}
Let $g^{H^{p,q}_\C}$ be the restriction of $g^{H^\bullet_\C}$ to $H^{p,q}_\C$.
Let $c_1\big(H^{p,q}_\C,g^{H^{p,q}_\C}\big)\in\Omega^{1,1}(S)$
be its first Chern form.
Here $H^{p,q}_\C$ is viewed a holomorphic vector bundle in the sense of \eqref{eq4-sum-hoge}.

\begin{prop}
\label{prop-hodgeform}
The following identity holds,
\begin{equation}
\label{eq-prop-hodgeform}
\omega_{H^\bullet} = \frac{1}{2} \sum_{0\leqslant p,q\leqslant n}
(-1)^{p+q}(p-q)c_1\big(H^{p,q}_\C,g^{H^{p,q}_\C}\big) \;.
\end{equation}
\end{prop}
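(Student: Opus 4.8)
The plan is to fix an auxiliary Hermitian metric $g^{H^\bullet_\C}$ on $H^\bullet_\C$ satisfying \eqref{eq-metric-vhs}, to rewrite the smooth function $f$ of \eqref{eq-def-f} in terms of the pointwise norms of $\tau$ and $\sigma$ for the metrics on $\lambda$ and $\lambda_\mathrm{dR}$ induced by $g^{H^\bullet_\C}$, and then to read off \eqref{eq-prop-hodgeform} by expressing the first Chern forms of $\lambda$ and $\lambda_\mathrm{dR}$ through those of the $H^{p,q}_\C$. The metric $g^{H^\bullet_\C}$ induces Hermitian metrics on the bundles $\det H^{p,q}_\C$, hence, via \eqref{eq-def-lambda-lambdadR}, a metric $\|\cdot\|_\lambda$ on $\lambda$ and a metric $\|\cdot\|_{\lambda_\mathrm{dR}}$ on $\lambda_\mathrm{dR}$.

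The first step is metric bookkeeping. Since the decomposition \eqref{eq-sum-hoge} is $g^{H^\bullet_\C}$-orthogonal by the first line of \eqref{eq-metric-vhs}, and $F^rH^k_\C=\bigoplus_{p=r}^{k}H^{p,k-p}_\C$, the canonical isomorphism of holomorphic line bundles $\det H^k_\C\cong\bigotimes_{p+q=k}\det H^{p,q}_\C$ — the determinant of a filtered holomorphic bundle being the tensor product of the determinants of its graded pieces, the latter identified with the $H^{p,q}_\C$ via \eqref{eq4-sum-hoge} — is then an isometry. Together with $\overline{H^{p,q}_\C}=H^{q,p}_\C$ and the second line of \eqref{eq-metric-vhs}, this shows that under the identification \eqref{eq-lambda-lambdadR} the metric $\|\cdot\|_{\lambda_\mathrm{dR}}$ corresponds to the tensor product of $\|\cdot\|_\lambda$ with the conjugate of $\|\cdot\|_\lambda$ on $\overline\lambda$. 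Applying this to the sections of \eqref{eq-def-f}, taking norms in $\sigma=e^f\,\tau\otimes\overline\tau$ gives $\|\sigma\|_{\lambda_\mathrm{dR}}=e^{\mathrm{Re}f}\,\|\tau\|_\lambda^2$, that is
\begin{equation}
2\,\mathrm{Re}f = \log\|\sigma\|_{\lambda_\mathrm{dR}}^2 - 2\log\|\tau\|_\lambda^2 \;.
\end{equation}

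Next I would differentiate and expand. As $\tau$ is a nowhere vanishing holomorphic section of $\lambda$, one has $\frac{\overline{\partial}\partial}{2\pi i}\log\|\tau\|_\lambda^2=-c_1\big(\lambda,\|\cdot\|_\lambda\big)$; as $\lambda_\mathrm{dR}$ is flat, its constant section $\sigma$ is a nowhere vanishing holomorphic section for the flat holomorphic structure, so likewise $\frac{\overline{\partial}\partial}{2\pi i}\log\|\sigma\|_{\lambda_\mathrm{dR}}^2=-c_1\big(\lambda_\mathrm{dR},\|\cdot\|_{\lambda_\mathrm{dR}}\big)$. Combining with \eqref{eq-def-hodgeform} and the identity for $\mathrm{Re}f$ yields $\omega_{H^\bullet}\big|_U=c_1\big(\lambda,\|\cdot\|_\lambda\big)-\frac12\,c_1\big(\lambda_\mathrm{dR},\|\cdot\|_{\lambda_\mathrm{dR}}\big)$. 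Finally, by additivity of the first Chern form under tensor powers, the (isometric) identifications $\lambda=\bigotimes_{p,q}\big(\det H^{p,q}_\C\big)^{(-1)^{p+q}p}$ and $\lambda_\mathrm{dR}=\bigotimes_{p,q}\big(\det H^{p,q}_\C\big)^{(-1)^{p+q}(p+q)}$, and $c_1\big(\det H^{p,q}_\C\big)=c_1\big(H^{p,q}_\C\big)$, one obtains
\begin{equation}
\omega_{H^\bullet}\big|_U = \sum_{0\leqslant p,q\leqslant n}(-1)^{p+q}\big(p-\tfrac{p+q}{2}\big)c_1\big(H^{p,q}_\C,g^{H^{p,q}_\C}\big) \;,
\end{equation}
and since $2\big(p-\tfrac{p+q}{2}\big)=p-q$ this is exactly \eqref{eq-prop-hodgeform}; the open set $U$ being arbitrary, the proposition follows.

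I expect the main difficulty to be precisely the metric bookkeeping in the first step: both conditions of \eqref{eq-metric-vhs} must genuinely be used to make \eqref{eq-lambda-lambdadR} and $\det H^k_\C\cong\bigotimes_{p+q=k}\det H^{p,q}_\C$ isometric. The subtlety is that the holomorphic structure on $H^{q,p}_\C$ prescribed by \eqref{eq4-sum-hoge} is \emph{not} the conjugate of the one on $H^{p,q}_\C$, so no naive relation such as $c_1(\overline L)=\pm c_1(L)$ is available, and the identities really have to be traced through the determinants of the graded pieces of the Hodge filtration.
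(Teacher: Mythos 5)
Your proposal is correct and follows essentially the same route as the paper's proof: both derive $\mathrm{Re}f=\tfrac12\log\lVert\sigma\rVert^2_{\lambda_\mathrm{dR}}-\log\lVert\tau\rVert^2_\lambda$ from the induced metrics, apply the Poincar\'e--Lelong formula to get $\omega_{H^\bullet}=c_1(\lambda,\lVert\cdot\rVert_\lambda)-\tfrac12 c_1(\lambda_\mathrm{dR},\lVert\cdot\rVert_{\lambda_\mathrm{dR}})$, and then expand the two first Chern forms over the $H^{p,q}_\C$ using the orthogonality and conjugation-invariance of $g^{H^\bullet_\C}$. Your extra care in checking that $\det H^k_\C\cong\bigotimes_{p+q=k}\det H^{p,q}_\C$ is a holomorphic isometry is a detail the paper leaves implicit, but the argument is the same.
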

\begin{proof}
Let \hbox{$\big\lVert\cdot\big\rVert_\lambda$}
(resp. \hbox{$\big\lVert\cdot\big\rVert_{\lambda_\mathrm{dR}}$})
be the norm on $\lambda$ (resp. $\lambda_\mathrm{dR}$)
induced by $g^{H^\bullet_\C}$.

Let $U\subseteq S$ be a small open subset.
Let $\tau$, $\sigma$ and $f$ be as in \eqref{eq-def-f}.
By \eqref{eq-def-f} and \eqref{eq-metric-vhs},
we have
\begin{equation}
\label{eq1-pf-prop-hodgeform}
\mathrm{Re} f =
- \log \big\lVert\tau \big\rVert^2_{\lambda}
+ \frac{1}{2} \log \big\lVert\sigma\big\rVert^2_{\lambda_\mathrm{dR}} \;.
\end{equation}
By the Poincar{\'e}-Lelong formula,
\eqref{eq-def-hodgeform} and \eqref{eq1-pf-prop-hodgeform},
we have
\begin{equation}
\label{eq2-pf-prop-hodgeform}
\omega_{H^\bullet} =
c_1\big(\lambda,\big\lVert\cdot\big\rVert_\lambda\big)
-\frac{1}{2} c_1\big(\lambda_\mathrm{dR},\big\lVert\cdot\big\rVert_{\lambda_\mathrm{dR}}\big) \;.
\end{equation}
On the other hand,
by \eqref{eq-sum-hoge}, \eqref{eq2-sum-hoge}, \eqref{eq-def-lambda-lambdadR} and \eqref{eq-metric-vhs},
we have
\begin{align}
\label{eq3-pf-prop-hodgeform}
\begin{split}
c_1\big(\lambda,\big\lVert\cdot\big\rVert_\lambda\big)
& = \sum_{0\leqslant p,q\leqslant n}
(-1)^{p+q}pc_1\big(H^{p,q}_\C,g^{H^{p,q}_\C}\big) \;,\\
c_1\big(\lambda_\mathrm{dR},\big\lVert\cdot\big\rVert_{\lambda_\mathrm{dR}}\big)
& = \sum_{0\leqslant p,q\leqslant n}
(-1)^{p+q}(p+q)c_1\big(H^{p,q}_\C,g^{H^{p,q}_\C}\big) \;.
\end{split}
\end{align}
From \eqref{eq2-pf-prop-hodgeform} and \eqref{eq3-pf-prop-hodgeform},
we obtain \eqref{eq-prop-hodgeform}.
This completes the proof.
\end{proof}

For $r\in \N$,
we denote by $H[r]^\bullet$
the $r$-th right shift of $H^\bullet$,
i.e.,
\begin{equation}
\label{eq-def-shift}
H[r]_\Z^k = H^{k-2r}_\Z \;,\hspace{5mm}
H[r]_\C^{p,q} =  H^{p-r,q-r}_\C \;.
\end{equation}

\begin{prop}
\label{prop-shift-hodgeform}
The following identity holds,
\begin{equation}
\label{eq-prop-shift-hodgeform}
\omega_{H^\bullet} = \omega_{H[r]^\bullet} \;.
\end{equation}
\end{prop}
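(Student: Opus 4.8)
The plan is to reduce the statement to Proposition \ref{prop-hodgeform} and then perform a bookkeeping re-indexing of the sum. First I would choose a Hermitian metric on $H[r]^\bullet_\C$ of the type \eqref{eq-metric-vhs}. By \eqref{eq-def-shift}, the smooth bundle $H[r]^{p,q}_\C$ is literally $H^{p-r,q-r}_\C$; moreover the filtration attached to $H[r]^\bullet$ is just the $r$-fold right shift of the filtration of $H^\bullet$, so via the identification \eqref{eq4-sum-hoge} the holomorphic structure on $H[r]^{p,q}_\C$ coincides with that on $H^{p-r,q-r}_\C$. Hence a metric on $H[r]^\bullet_\C$ satisfying \eqref{eq-metric-vhs} is the same datum as such a metric on $H^\bullet_\C$, and I would simply take $g^{H[r]^{p,q}_\C} = g^{H^{p-r,q-r}_\C}$. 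With these matching holomorphic structures and metrics, the Chern forms agree: $c_1\big(H[r]^{p,q}_\C, g^{H[r]^{p,q}_\C}\big) = c_1\big(H^{p-r,q-r}_\C, g^{H^{p-r,q-r}_\C}\big)$.

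Applying Proposition \ref{prop-hodgeform} to $H[r]^\bullet$ (summing over all $p,q\geqslant 0$, only finitely many terms being nonzero, and the nonzero ones having $p,q\geqslant r$) and substituting $p\mapsto p+r$, $q\mapsto q+r$, I obtain
\begin{equation*}
\omega_{H[r]^\bullet} = \frac{1}{2} \sum_{0\leqslant p,q\leqslant n}
(-1)^{(p+r)+(q+r)}\big((p+r)-(q+r)\big)\,c_1\big(H^{p,q}_\C,g^{H^{p,q}_\C}\big) \;.
\end{equation*}
Since $2r$ is even we have $(-1)^{(p+r)+(q+r)} = (-1)^{p+q}$, and $(p+r)-(q+r) = p-q$, so the right-hand side is exactly the expression for $\omega_{H^\bullet}$ in \eqref{eq-prop-hodgeform}. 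This yields \eqref{eq-prop-shift-hodgeform}.

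There is essentially no hard step here; the only points requiring care are the two I flagged above: checking that the shifted filtration in \eqref{eq-def-shift} induces on $H[r]^{p,q}_\C$ the same holomorphic structure as on $H^{p-r,q-r}_\C$ (so that "the same metric" is meaningful and the Chern forms match), and noting that the weight-zero summand $(p,q)=(0,0)$ contributes $0$ to the formula in Proposition \ref{prop-hodgeform}, so the slight mismatch of index ranges at the bottom is harmless. Alternatively, since $\omega_{H^\bullet}$ is known to be independent of the choice of compatible metric, one could bypass the metric discussion and argue straight from the definition \eqref{eq-def-hodgeform}: the line bundles $\lambda$ and $\lambda_\mathrm{dR}$ of \eqref{eq-def-lambda-lambdadR} attached to $H[r]^\bullet$ differ from those of $H^\bullet$ only by an overall even shift of exponents, hence are canonically isomorphic, and $\omega_{H^\bullet}$ depends only on these; but the re-indexing argument above is the cleanest.
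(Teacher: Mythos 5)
Your proposal is correct and is essentially the paper's own argument: the paper's proof is the single observation that the right-hand side of \eqref{eq-prop-hodgeform} is invariant under the right shift, which is exactly the re-indexing $(p,q)\mapsto(p+r,q+r)$ you carry out, with the sign and weight unchanged since $(-1)^{(p+r)+(q+r)}\big((p+r)-(q+r)\big)=(-1)^{p+q}(p-q)$. Your additional checks (matching holomorphic structures and metrics, and the harmlessness of the index-range shift) are just the details the paper leaves implicit.
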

\begin{proof}
The right hand side of \eqref{eq-prop-hodgeform}
is invariant under right shift.
\end{proof}

\subsection{Quillen metric}
\label{subsect-quillen}

Let $X$ be a compact K{\"a}hler manifold of dimension $n$.
Let $E$ be a holomorphic vector bundle over $X$.
Let $\overline{\partial}^E$ be the Dolbeault operator on
\begin{equation}
\Omega^{0,\bullet}(X,E) =
\smooth(X,\Lambda^\bullet(\overline{T^*X})\otimes E) \;.
\end{equation}
For $q=0,\cdots,n$,
we have
\begin{equation}
H^q(X,E) = H^q\big(\Omega^{0,\bullet}(X,E),\overline{\partial}^E\big) \;.
\end{equation}
Set
\begin{equation}
\label{eq-lambda-E}
\lambda(E) = \det H^\bullet(X,E) = \bigotimes_{q=0}^n \big(\det H^q(X,E)\big)^{(-1)^q} \;.
\end{equation}

Let $g^{TX}$ be a K{\"a}hler metric on $TX$.
Let $g^E$ be a Hermitian metric on $E$.
Let $\big\langle\cdot,\cdot\big\rangle_{\Lambda^\bullet(\overline{T^*X})\otimes E}$
be the Hermitian product on $\Lambda^\bullet(\overline{T^*X})\otimes E$ induced by $g^{TX}$ and $g^E$.
Let $dv_X$ be the volume form on $X$ induced by $g^{TX}$.
For $s_1,s_2\in \Omega^{0,\bullet}(X,E)$,
set
\begin{equation}
\label{eq-def-L2-metric}
\big\langle s_1,s_2 \big\rangle = (2\pi)^{-n} \int_X
\big\langle s_1,s_2 \big\rangle_{\Lambda^\bullet(\overline{T^*X})\otimes E} dv_X \;.
\end{equation}

Let $\overline{\partial}^{E,*}$ be the formal adjoint of $\overline{\partial}^E$
with respect to the Hermitian product \eqref{eq-def-L2-metric}.
The Dolbeault Laplacian on $\Omega^{0,\bullet}(X,E)$ is defined by
\begin{equation}
\Delta^E = \overline{\partial}^E\overline{\partial}^{E,*} + \overline{\partial}^{E,*}\overline{\partial}^E \;.
\end{equation}
Then $\Delta^E$ preserves the degree,
i.e. $\Delta^E\big(\Omega^{0,q}(X,E)\big)\subseteq\Omega^{0,q}(X,E)$ for $q=0,\cdots,n$.
Let $\Delta^E_q$ be the restriction of $\Delta^E$ to $\Omega^{0,q}(X,E)$.
The operator $\Delta^E_q$ is essentially self-adjoint.
Its self-adjoint extension will still be denoted by $\Delta^E_q$.

By Hodge Theorem,
we have
\begin{equation}
\mathrm{Ker}\big(\Delta^E_q\big) =
\Big\{s\in\Omega^{0,q}(X,E)\;:\;
\overline{\partial}^Es=0\;,\;\overline{\partial}^{E,*}s=0\Big\} \;.
\end{equation}
Moreover,
the following map is bijective,
\begin{align}
\label{eq-iso-hodge}
\begin{split}
\mathrm{Ker}\big(\Delta^E_q\big) & \rightarrow H^q(X,E) \\
s & \mapsto [s] \;.
\end{split}
\end{align}
Let $\big|\cdot\big|_{\lambda(E)}$ be the metric on $\lambda(E)$
induced by the Hermitian product \eqref{eq-def-L2-metric}
via the isomorphism \eqref{eq-iso-hodge}.

Let $\mathrm{Sp}(\Delta^E_q)$ be the spectrum of $\Delta^E_q$.
For $s\in\C$ with $\mathrm{Re}(s)>n$,
set
\begin{equation}
\label{eq-def-theta}
\theta^E_X(s) = \sum_{q=1}^n (-1)^{q+1}q \sum_{\lambda\in\mathrm{Sp}(\Delta^E_q),\lambda\neq 0}  \lambda^{-s} \;.
\end{equation}
By \cite{se},
the function $\theta^E_X(s)$ extends to a meromorphic function of $s\in\C$,
which is holomorphic at $s=0$.

The Quillen metric on $\lambda(E)$ is defined by
\begin{equation}
\label{eq-def-quillen}
\big\lVert\cdot\big\rVert_{\lambda(E)} =
\exp\Big(\frac{1}{2}\frac{\partial \theta^E_X}{\partial s}(0)\Big)
\big|\cdot\big|_{\lambda(E)} \;.
\end{equation}

\noindent\textbf{Behavior under immersion.}
Let $X$ be a compact K{\"a}hler manifold of dimension $2$.
Let $Y\subseteq X$ be a curve.
Let $j:Y \rightarrow X$ be the canonical embedding.
Let $L$ be a holomorphic line bundle over $X$
together with $v \in H^0(X,L^{-1})$
such that $v: L \rightarrow \C$ provides a resolution of $j_*\mathscr{O}_Y$.
More precisely,
we have a short exact sequence of analytic coherent sheaves on $X$,
\begin{equation}
\label{eq-immersion-resol}
0 \rightarrow \mathscr{O}_X(L) \xrightarrow{v} \mathscr{O}_X \rightarrow j_*\mathscr{O}_Y \rightarrow 0 \;,
\end{equation}
where $\mathscr{O}_X \rightarrow j_*\mathscr{O}_Y$ is the obvious restriction map.
By the long exact sequence induced by \eqref{eq-immersion-resol},
there is a canonical section
\begin{equation}
\sigma \in \det H^\bullet(X,L) \otimes \big(\det H^{0,\bullet}(X)\big)^{-1} \otimes \det H^{0,\bullet}(Y) \;.
\end{equation}

Let $N_Y$ be the normal bundle of $Y\subseteq X$.
Let $\nabla^{L^{-1}}$ be a connection on $L^{-1}$.
Set
\begin{equation}
v' = \nabla^{L^{-1}} v \big|_Y \in H^0(Y,N_Y^{-1}\otimes L^{-1}) \;,
\end{equation}
which is independent of $\nabla^{L^{-1}}$.
We remark that $v'$ is nowhere vanishing.

Let $g^{TX}$ be a K{\"a}hler metric on $TX$.
Let $g^{TY}$ be the metric on $TY$ induced by $g^{TX}$.
Let $g^L$ be a Hermitian metric on $L$.
Let $\big\lVert\cdot\big\rVert_{\det H^\bullet(X,L)}$
be the Quillen metric on $\det H^\bullet(X,L)$ associated with $g^{TX}$ and $g^L$.
Let $\big\lVert\cdot\big\rVert_{\det H^{0,\bullet}(X)}$
be the Quillen metric on $\det H^{0,\bullet}(X)$ associated with $g^{TX}$.
Let $\big\lVert\cdot\big\rVert_{\det H^{0,\bullet}(Y)}$
be the Quillen metric on $\det H^{0,\bullet}(Y)$ associated with $g^{TY}$.
Let $\big\lVert\sigma\big\rVert$ be the norm of $\sigma$
with respect to the product metric.

Let $g^{N_Y}$ be the metric on $N_Y$ induced by $g^{TX}$.
Let $\big| v' \big|$ be the norm of $v'$
with respect to the metric on $N_Y^{-1}\otimes L^{-1}$
induced by $g^{N_Y}$ and $g^L$.
We assume that
\begin{equation}
\label{eq-A}
\big| v' \big| = 1 \;.
\end{equation}
The assumption \eqref{eq-A} is equivalent to assumption (A) \cite[Definition 1.5]{b90}.

Let $\big| v \big|$ be the norm of $v$
with respect to the metric on $L^{-1}$ induced by $g^L$.

Let $\zeta(s)$ be the Riemann zeta function.

\begin{thm}
\label{thm-quillen-immersion}
The following identity holds,
\begin{align}
\label{eq-thm-quillen-immersion}
\begin{split}
\log \big\lVert\sigma\big\rVert^2 & =
- \int_X \Big(1+\frac{c_1}{2}+\frac{c_1^2+c_2}{12}\Big)(TX,g^{TX})
\Big(1+\frac{c_1}{2}+\frac{c_1^2}{6}\Big)(L,g^L) \log \big|v\big|^2 \\
& \hspace{5mm} + \frac{1}{12} \int_Y \widetilde{c}(TY,TX\big|_Y,g^{TX}\big|_Y) \\
& \hspace{5mm} + \big(\zeta(-1) + 2\zeta'(-1)\big)
\Big( \int_X c_1(TX)c_1(L) + \int_Y c_1(TY) \Big) \;.
\end{split}
\end{align}
\end{thm}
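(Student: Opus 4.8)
The plan is to deduce \eqref{eq-thm-quillen-immersion} from the Bismut--Lebeau immersion formula \cite[Theorem 0.1]{ble}, applied to the closed embedding $j:Y\hookrightarrow X$ and the two-term Koszul resolution \eqref{eq-immersion-resol} of $j_*\mathscr{O}_Y$. In its general form, \cite[Theorem 0.1]{ble} expresses $\log\big\lVert\sigma\big\rVert^2$ (for $\sigma$ the canonical section of the determinant line coming from the long exact sequence attached to \eqref{eq-immersion-resol}) as a sum of three contributions: (i) an integral over $X$ of $\mathrm{Td}(TX,g^{TX})$ against the Bott--Chern transgression current attached to $(L,v,g^L)$; (ii) an integral over $Y$ of the secondary Todd class comparing the metric $g^{TY}$ with $g^{TX}\big|_Y$ through the short exact sequence $0\to TY\to TX\big|_Y\to N_Y\to 0$; and (iii) an integral over $Y$ involving the additive genus $R$ (the Gillet--Soul\'e $R$-genus) of the normal bundle $N_Y$, weighted by $\mathrm{Td}(TX\big|_Y)$. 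The hypothesis \eqref{eq-A}, which is exactly assumption (A) of \cite[Definition 1.5]{b90}, is what is needed for \cite[Theorem 0.1]{ble} to take this clean shape, with no residual term of the form $\int_Y(\cdots)\log\big|v'\big|$ comparing the induced metric $g^{N_Y}$ with the one normalized by $v'$. The three lines of \eqref{eq-thm-quillen-immersion} will be precisely (i), (ii), (iii) after specializing to $\dim X=2$ and evaluating in the few bidegrees that survive on a surface and on a curve.

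For the term (i): since the resolution is Koszul and $v\in H^0(X,L^{-1})$ cuts out $Y$ transversally, the computation of the transgression current for the Koszul complex (carried out in \cite{b90}, see also \cite{ble}) shows that it reduces, modulo $\partial(\cdot)+\overline{\partial}(\cdot)$, to $\mathrm{Td}^{-1}(L^{-1},g^{L^{-1}})\log\big|v\big|^2$; keeping only the component of total bidegree $(2,2)$ and using that $\mathrm{Td}^{-1}(L^{-1},g^{L^{-1}})=\big(1+\tfrac{c_1}{2}+\tfrac{c_1^2}{6}\big)(L,g^L)$ in that range, one obtains the first line of \eqref{eq-thm-quillen-immersion}. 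For the term (ii): the induced metric on $\det(TX)\big|_Y$ agrees with the tensor product metric on $\det(TY)\otimes\det(N_Y)=TY\otimes N_Y$, so $c_1$ is additive at the level of Chern forms for the metrized sequence $0\to TY\to TX\big|_Y\to N_Y\to 0$; by linearity of the Bott--Chern transgression this forces the secondary Todd class to equal $\tfrac{1}{12}$ times the secondary total Chern class $\widetilde{c}(TY,TX\big|_Y,g^{TX}\big|_Y)$ in the unique bidegree $(1,1)$ that contributes on the curve $Y$ (because $\mathrm{Td}_2$ is $\tfrac1{12}(c_1^2+c_2)$ relative to $c$, and the lower secondary Todd classes vanish on $Y$). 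This yields the second line.

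For the term (iii): since $N_Y$ is a line bundle and $R(x)=\big(\zeta(-1)+2\zeta'(-1)\big)x+O(x^3)$, only the linear term of $R$ survives on the curve, so the contribution is $-\big(\zeta(-1)+2\zeta'(-1)\big)\int_Y c_1(N_Y)$. The adjunction identity $c_1(N_Y)=c_1(TX)\big|_Y-c_1(TY)$ together with $\big[Y\big]=c_1(L^{-1})$ (as $Y$ is the zero locus of $v\in H^0(X,L^{-1})$) gives $\int_Y c_1(N_Y)=-\int_X c_1(TX)c_1(L)-\int_Y c_1(TY)$, and hence the third line of \eqref{eq-thm-quillen-immersion}.

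The bulk of the work, and the only genuine subtlety, is the careful passage from the general statement of \cite[Theorem 0.1]{ble} (stated with currents and the full $R$-genus in arbitrary codimension) to the three specializations above: tracking the direction of the maps and the grading in \eqref{eq-immersion-resol} so that $\det H^\bullet(X,L)\otimes\big(\det H^{0,\bullet}(X)\big)^{-1}\otimes\det H^{0,\bullet}(Y)$ is indeed the determinant line containing $\sigma$ and the overall sign matches \eqref{eq-thm-quillen-immersion}; checking that under \eqref{eq-A} the term of \cite[Theorem 0.1]{ble} depending on the auxiliary choice of $g^{N_Y}$ drops out; and confirming the stated simple form of the Koszul transgression current. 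I would also cross-check the identity by differentiating it in a holomorphic family and comparing with the curvature formula \cite[Theorem 0.1]{bgs1}, which must reproduce the curvature of the product Quillen metric on $\det H^\bullet(X,L)\otimes\big(\det H^{0,\bullet}(X)\big)^{-1}\otimes\det H^{0,\bullet}(Y)$; this is a robust test of all the rational coefficients, in particular of the $\tfrac1{12}$ in the second line.
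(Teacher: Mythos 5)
Your proposal follows essentially the same route as the paper's proof: both apply the Bismut--Lebeau immersion formula \cite[Theorem 0.1]{ble} to the Koszul resolution \eqref{eq-immersion-resol}, identify the Bott--Chern current of $v:L\to\C$ with $\mathrm{Td}^{-1}(L^{-1},g^{L^{-1}})\log|v|^2$ modulo $\im\partial+\im\overline{\partial}$ via \cite{bgs90}, and reduce the secondary Todd class to $\tfrac1{12}\widetilde{c}(TY,TX|_Y,g^{TX}|_Y)$ in bidegree $(1,1)$ by the additivity of $c_1$ at the level of Chern forms and the uniqueness of Bott--Chern classes. Your explicit evaluation of the $R$-genus term (linear part of $R$ on the line bundle $N_Y$, then adjunction and $[Y]=c_1(L^{-1})$) and the remark on the role of assumption \eqref{eq-A} are details the paper leaves implicit, but they are correct and consistent with the stated formula.
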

\begin{proof}
We view $\C$ as a trivial line bundle over $X$.
Let $g^\C$ be the obvious metric on $\C$.
Let $T(v,g^L)\in A^{\bullet,\bullet}(X)$ be
the Bott-Chern current of the complex of holomorphic vector bundles $v: L \rightarrow \C$
equipped with metrics $g^L,g^\C$.
By \cite[page 285]{bgs90} and \cite[Theorem 3.17]{bgs90},
we have
\begin{equation}
\label{eq1-pf-thm-quillen-immersion}
T(v,g^L) = \mathrm{Td}^{-1}(L^{-1},g^{L^{-1}}) \log \big|v\big|^2
+ \partial \alpha + \overline{\partial} \beta \;,
\end{equation}
where $\alpha,\beta\in A^{\bullet,\bullet}(X)$.

Let $\widetilde{\mathrm{Td}}(TY,TX\big|_Y,g^{TX}\big|_Y)\in A^{\bullet,\bullet}(Y)$ be as in \cite[(0.3)]{ble}.
By \cite[(0.3)]{ble}, \eqref{eq-def-BC}
and the uniqueness of Bott-Chern form (see \cite[Theorem 1.29]{bgs1}),
we have
\begin{align}
\label{eq2-pf-thm-quillen-immersion}
\begin{split}
\Big\{\widetilde{\mathrm{Td}}(TY,TX\big|_Y,g^{TX}\big|_Y)\Big\}^{(0,0)} & = 0 \;,\\
\Big\{\widetilde{\mathrm{Td}}(TY,TX\big|_Y,g^{TX}\big|_Y)\Big\}^{(1,1)} & =
\frac{1}{12} \Big\{\widetilde{c}(TY,TX\big|_Y,g^{TX}\big|_Y)\Big\}^{(1,1)} + \partial \alpha + \overline{\partial} \beta \;,
\end{split}
\end{align}
where $\alpha,\beta\in\Omega^{\bullet,\bullet}(Y)$.

From \cite[Theorem 0.1]{ble},
\eqref{eq1-pf-thm-quillen-immersion} and \eqref{eq2-pf-thm-quillen-immersion},
we obtain \eqref{eq-thm-quillen-immersion}.
This completes the proof.
\end{proof}

\noindent\textbf{Behavior under blow-up.}
Let $X$ and $W$ be compact K{\"a}hler manifolds of dimension $n\geqslant 2$.
Let $Z$ be a compact K{\"a}hler manifold of dimension $n-2$.
Let $i: Z\rightarrow X$ and $j: Z\rightarrow W$ be complex immersions.
We assume that there exist open subsets
\begin{equation}
i(Z) \subseteq U \subseteq X \;,\hspace{5mm}
j(Z) \subseteq \mathcal{U} \subseteq W
\end{equation}
and a biholomorphic map $\varphi: U \rightarrow \mathcal{U}$
such that the following diagram commutes,
\begin{equation}
\xymatrix{
Z \ar[d]_i \ar[dr]^j & \\
U \ar[r]^{\hspace{-1.5mm}\varphi} & \mathcal{U} \;.
}
\end{equation}

Let $E$ be a holomorphic vector bundle over $X$.
Let $F$ be a holomorphic vector bundle over $W$.
We assume that there is an isomorphism
$\phi: E\big|_U \rightarrow F\big|_\mathcal{U}$.

Let $f: X' \rightarrow X$ be the blow-up along $i(Z)$.
Then $\det H^\bullet(X,E)$ is canonically isomorphic to $\det H^\bullet(X',f^*E)$.
Let
\begin{equation}
\label{eq-tauEf}
\tau_E^f \in \big(\det H^\bullet(X,E)\big)^{-1} \otimes \det H^\bullet(X',f^*E)
\end{equation}
be the section which defines the canonical isomorphism.
Let $g: W' \rightarrow W$ be the blow-up along $j(Z)$.
We construct
\begin{equation}
\label{eq-tauFg}
\tau_F^g \in \big(\det H^\bullet(W,F)\big)^{-1} \otimes \det H^\bullet(W',g^*F)
\end{equation}
in the same way.

Let $g^{TX}$ and $g^{TW}$ be K{\"a}hler metrics on $TX$ and $TW$.
We assume that
\begin{equation}
\varphi_*\big(g^{TX}\big|_U\big)  = g^{TW}\big|_{\mathcal{U}} \;.
\end{equation}
We denote $U' = f^{-1}(U)$ and $\mathcal{U}' = g^{-1}(\mathcal{U})$.
Let $\varphi': U'\rightarrow \mathcal{U}'$ be the lift of $\varphi$.
Let $g^{TX'}$ and $g^{TW'}$ be K{\"a}hler metrics on $TX'$ and $TW'$ such that
$X'\backslash U' \xrightarrow{f} X \backslash U$,
$W'\backslash \mathcal{U}' \xrightarrow{g} W \backslash \mathcal{U}$
and $U' \xrightarrow{\varphi'} \mathcal{U}'$
are isometric.

Let $g^E$ and $g^F$ be Hermitian metrics on $E$ and $F$ such that
$E\big|_U \xrightarrow{\phi} F\big|_\mathcal{U}$
is isometric.

Let $\big\lVert\tau_E^f \big\rVert$ and $\big\lVert\tau_F^g \big\rVert$
be the norms of $\tau_E^f$ and $\tau_F^g$ with respect to the Quillen metrics.

\begin{thm}
\label{thm-quillen-bl}
The following identity holds,
\begin{equation}
\big\lVert\tau_E^f \big\rVert = \big\lVert\tau_F^g \big\rVert \;.
\end{equation}
\end{thm}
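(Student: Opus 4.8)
The plan is to deduce the identity from \cite[Theorem 8.10]{b97} by showing that $\log\big\lVert\tau_E^f\big\rVert^2$ is given by a universal formula involving only geometric data supported in a neighbourhood of $i(Z)$; since by hypothesis $\varphi$ and $\phi$ identify such data over $U$ with the corresponding data over $\mathcal{U}$, the two Quillen norms must then agree.

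Concretely, \cite[Theorem 8.10]{b97} expresses $\log\big\lVert\tau_E^f\big\rVert^2$ as a sum of integrals: of Todd and Chern forms of $(TX,g^{TX})$, $(E,g^E)$ and of their analogues on $X'$; of Bott-Chern currents comparing $g^{TX'}$ with $g^{TX}$ and $g^{f^*E}$ with $g^E$; and of torsion-type contributions concentrated on the exceptional divisor $f^{-1}(i(Z))$. The first step is to check that each of these integrands vanishes away from $U$ and $U'=f^{-1}(U)$. Indeed $f$ restricts to a biholomorphism $X'\backslash U'\to X\backslash U$ under which, by the chosen normalisations, $g^{TX'}$ corresponds to $g^{TX}$ and $g^{f^*E}$ to $g^E$; hence the Bott-Chern terms vanish identically there, while the remaining characteristic forms, being pullbacks, contribute nothing new off $U'$. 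One concludes that $\log\big\lVert\tau_E^f\big\rVert^2$ is a fixed functional of the local data $\big(g^{TX}\big|_U, E\big|_U, g^E\big|_U, g^{TX'}\big|_{U'}\big)$ together with the immersion $i\colon Z\to U$, and the same functional computes $\log\big\lVert\tau_F^g\big\rVert^2$ from the corresponding data over $\mathcal{U}$.

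The conclusion then follows by invoking the hypotheses in turn: $\varphi\colon U\to\mathcal{U}$ is biholomorphic with $\varphi_*(g^{TX}\big|_U)=g^{TW}\big|_{\mathcal{U}}$ and $\varphi\circ i=j$; its lift $\varphi'\colon U'\to\mathcal{U}'$ is an isometry between $(U',g^{TX'}\big|_{U'})$ and $(\mathcal{U}',g^{TW'}\big|_{\mathcal{U}'})$; and $\phi\colon E\big|_U\to F\big|_{\mathcal{U}}$ is an isometric isomorphism. Together these say precisely that $\varphi$ transports the whole local model over $U$ onto the one over $\mathcal{U}$, so the universal functional returns the same real number for $\tau_E^f$ and for $\tau_F^g$, whence $\big\lVert\tau_E^f\big\rVert=\big\lVert\tau_F^g\big\rVert$.

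The main obstacle, I expect, is establishing the locality used in the second step. As stated, Bismut's formula contains integrals over all of $X$ of characteristic forms, and the corresponding characteristic numbers of $X$ and of $W$ need not coincide; one must regroup these global contributions — using that the centre is smooth, so that $Rf_*\mathscr{O}_{X'}\cong\mathscr{O}_X$ and hence $H^\bullet(X',f^*E)\cong H^\bullet(X,E)$ canonically, together with the identification of $f^{-1}(i(Z))$ with the projectivised normal bundle of $i(Z)$ in $X$ — until the formula becomes manifestly supported near $i(Z)$. A minor and routine point is to confirm that the canonical section $\tau_E^f$ of \eqref{eq-tauEf} is exactly the section to which \cite[Theorem 8.10]{b97} applies; this amounts to tracing the base-change isomorphism through the Leray spectral sequence while keeping track of signs in the determinant lines.
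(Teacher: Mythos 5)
Your proposal follows the same route as the paper, whose entire proof is the single sentence that the identity is an immediate consequence of \cite[Theorem 8.10]{b97}. The ``locality'' you flag as the main obstacle is in fact already the content of Bismut's theorem --- it expresses $\log\big\lVert\tau_E^f\big\rVert^2$ through data supported near the centre of the blow-up under exactly the metric normalisations imposed here --- so no regrouping of global characteristic numbers is needed, and your transport argument via $\varphi$, $\varphi'$ and $\phi$ is precisely what makes the consequence immediate.
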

\begin{proof}
This is an immediate consequence of \cite[Theorem 8.10]{b97}.
\end{proof}

Let $N$ be the normal bundle of $i(Z)\subseteq X$.
Set $D=\mathbb{P}(N)$.
We have canonical isomorphisms
$i_D: D \rightarrow f^{-1}\big(i(Z)\big)$ and
$j_D: D \rightarrow g^{-1}\big(j(Z)\big)$.

For $p=1,\cdots,n$,
there exists a holomorphic vector bundle $G^p$ over $D$
together with holomorphic maps
\begin{equation}
r: \Lambda^p(T^*X')\big|_{i_D(D)} \rightarrow i_{D,*}G^p \;,\hspace{5mm}
s: \Lambda^p(T^*W')\big|_{j_D(D)} \rightarrow j_{D,*}G^p
\end{equation}
such that
\begin{align}
\label{eq-ses-bl}
\begin{split}
0 \rightarrow f^*\Omega^p_X \rightarrow \Omega^p_{X'}
\xrightarrow{r} i_{D,*}\mathscr{O}_D(G^p) \rightarrow 0 \;,\\
0 \rightarrow g^*\Omega^p_W \rightarrow \Omega^p_{W'}
\xrightarrow{s} j_{D,*}\mathscr{O}_D(G^p) \rightarrow 0
\end{split}
\end{align}
are exact.
Let
\begin{align}
\begin{split}
& \eta^f_p \in
\Big(\det H^\bullet\big(X',f^*\Lambda^p(T^*X)\big)\Big)^{-1} \otimes
\det H^{p,\bullet}(X') \otimes
\big(\det H^\bullet(D,G^p)\big)^{-1} \;,\\
& \eta^g_p \in
\Big(\det H^\bullet\big(W',g^*\Lambda^p(T^*W)\big)\Big)^{-1} \otimes
\det H^{p,\bullet}(W') \otimes
\big(\det H^\bullet(D,G^p)\big)^{-1}
\end{split}
\end{align}
be the canonical sections induced by \eqref{eq-ses-bl}.
Let
\begin{align}
\begin{split}
& \tau_p^f \in \big(\det H^{p,\bullet}(X)\big)^{-1} \otimes \det H^\bullet\big(X',f^*\Lambda^p(T^*X)\big) \;, \\
& \tau_p^g \in \big(\det H^{p,\bullet}(W)\big)^{-1} \otimes \det H^\bullet\big(W',g^*\Lambda^p(T^*W)\big)
\end{split}
\end{align}
be as in \eqref{eq-tauEf}, \eqref{eq-tauFg}
with $E = \Lambda^p(T^*X)$, $F = \Lambda^p(T^*W)$.
Set
\begin{align}
\label{eq-sigma-fg-p}
\begin{split}
& \sigma^f_p = \tau^f_p\otimes\eta^f_p \in
\big(\det H^{p,\bullet}(X)\big)^{-1} \otimes
\det H^{p,\bullet}(X') \otimes
\big(\det H^\bullet(D,G^p)\big)^{-1} \;,\\
& \sigma^g_p = \tau^g_p\otimes\eta^g_p \in
\big(\det H^{p,\bullet}(W)\big)^{-1} \otimes
\det H^{p,\bullet}(W') \otimes
\big(\det H^\bullet(D,G^p)\big)^{-1} \;.
\end{split}
\end{align}

Let $g^{TD}$ be the K{\"a}hler metric on $TD$
induced by $g^{TX'}$ via the embedding $i_D: D \rightarrow X'$.
Let $g^{G^p}$ be the Hermitian metric on $G^p$
induced by $g^{TX'}$ via $r: \Lambda^p(T^*X')\big|_{i_D(D)} \rightarrow i_{D,*}G^p$.

Let $\big\lVert\sigma^f_p\big\rVert$ and $\big\lVert\sigma^g_p\big\rVert$
be the norms of $\sigma^f_p$ and $\sigma^g_p$ with respect to the Quillen metrics.

\begin{cor}
\label{cor-quillen-bl}
The following identity holds,
\begin{equation}
\label{eq-cor-quillen-bl}
\big\lVert\sigma^f_p\big\rVert = \big\lVert\sigma^g_p\big\rVert \;.
\end{equation}
\end{cor}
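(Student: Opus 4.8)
The plan is to split $\sigma^f_p=\tau^f_p\otimes\eta^f_p$ into its two tensor factors and treat each one separately. In the line of $\tau^f_p$ and in the line of $\eta^f_p$ the determinant line $\det H^\bullet\big(X',f^*\Lambda^p(T^*X)\big)$ appears once each, with opposite signs, and in both places it carries the Quillen metric attached to $g^{TX'}$ and $f^*g^{\Lambda^p(T^*X)}$; choosing a unit vector there one sees that the two copies cancel in $\sigma^f_p=\tau^f_p\otimes\eta^f_p$, so that $\big\lVert\sigma^f_p\big\rVert=\big\lVert\tau^f_p\big\rVert\,\big\lVert\eta^f_p\big\rVert$, and likewise over $W$. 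It therefore suffices to prove $\big\lVert\tau^f_p\big\rVert=\big\lVert\tau^g_p\big\rVert$ and $\big\lVert\eta^f_p\big\rVert=\big\lVert\eta^g_p\big\rVert$. The first is an instance of Theorem \ref{thm-quillen-bl} with $E=\Lambda^p(T^*X)$, $F=\Lambda^p(T^*W)$: the map $\phi$ and the metrics $g^E$, $g^F$ are induced by $\varphi$, $g^{TX}$, $g^{TW}$, so $\varphi_*\big(g^{TX}\big|_U\big)=g^{TW}\big|_\mathcal{U}$ makes $\phi$ an isometry over $U$, all hypotheses hold, and $\tau^f_p=\tau_E^f$, $\tau^g_p=\tau_F^g$.

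For $\big\lVert\eta^f_p\big\rVert=\big\lVert\eta^g_p\big\rVert$, I would use that \eqref{eq-ses-bl} realizes the inclusion $f^*\Omega^p_X\hookrightarrow\Omega^p_{X'}$ as a two-term resolution of the pushforward sheaf $i_{D,*}\mathscr{O}_D(G^p)$ by holomorphic vector bundles, a map which is an isomorphism away from $i_D(D)$, and that $\eta^f_p$ is, up to sign, the canonical section of the resulting trivialization of determinant lines. Its Quillen norm is then governed by \cite[Theorem 8.10]{b97} applied to this exact sequence (compare the proof of Theorem \ref{thm-quillen-bl}; at bottom this is the immersion formula \cite[Theorem 0.1]{ble}): $\log\big\lVert\eta^f_p\big\rVert^2$ is a sum of an integral over $X'$ of $\mathrm{Td}(TX',g^{TX'})$ against the Bott-Chern current of the resolution $f^*\Omega^p_X\to\Omega^p_{X'}$ equipped with the metrics $f^*g^{\Lambda^p(T^*X)}$ and $g^{\Lambda^p(T^*X')}$; integrals over $i_D(D)$ of $\widetilde{\mathrm{Td}}$-type secondary characteristic forms built from $g^{TX'}\big|_{i_D(D)}$, the normal bundle of $i_D(D)\subseteq X'$ with its induced metric, and $(G^p,g^{G^p})$; and a Gillet--Soul\'e $R$-genus contribution, again an integral over $i_D(D)$.

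The decisive point is that each of these terms is localized near $i_D(D)$ and transports under the lift $\varphi':U'\to\mathcal{U}'$. Over $X'\setminus U'$ one has $g^{TX'}=f^*g^{TX}$, so the inclusion $f^*\Omega^p_X\hookrightarrow\Omega^p_{X'}$ is an isometric isomorphism there; hence the Bott-Chern current of the resolution vanishes on $X'\setminus U'$ and is supported in $\overline{U'}$, and the first integral reduces to one over $U'$. The remaining terms are by construction integrals over $i_D(D)\subseteq U'$ of characteristic forms depending only on $g^{TX'}\big|_{i_D(D)}$, on the normal bundle of $i_D(D)$, and on $(G^p,g^{G^p})$. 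Since $\varphi'$ is an isometry for $g^{TX'}$, $g^{TW'}$, carries $i_D(D)$ onto $j_D(D)$, and identifies $(G^p,g^{G^p})$ with the corresponding bundle and metric in the second line of \eqref{eq-ses-bl}, it matches the whole formula term by term with the one for $g:W'\to W$. Therefore $\log\big\lVert\eta^f_p\big\rVert^2=\log\big\lVert\eta^g_p\big\rVert^2$, and combined with the first paragraph this yields \eqref{eq-cor-quillen-bl}.

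The step I expect to be the main obstacle is exactly this localization: one must check carefully that, with the prescribed compatible metrics on $TX'$, $TW'$ and on the bundles, no term of the formula picks up a genuine contribution over $X'\setminus U'$, where $X'$ and $W'$ need not be biholomorphic — the delicate part being the support of the Bott-Chern current of the two-term resolution, and the verification that the $R$-genus term really is an integral over $i_D(D)$ rather than a global integral over $X'$. A secondary, routine matter is the determinant-line bookkeeping behind the cancellation $\big\lVert\sigma^f_p\big\rVert=\big\lVert\tau^f_p\big\rVert\,\big\lVert\eta^f_p\big\rVert$, together with the degenerate low-dimensional cases ($\dim X=2$, small $p$), where $i_D(D)$ is a disjoint union of projective lines.
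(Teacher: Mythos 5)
Your proposal is correct and follows essentially the same route as the paper: factor $\sigma^f_p=\tau^f_p\otimes\eta^f_p$ so that $\big\lVert\sigma^f_p\big\rVert=\big\lVert\tau^f_p\big\rVert\big\lVert\eta^f_p\big\rVert$, apply Theorem \ref{thm-quillen-bl} to get $\big\lVert\tau^f_p\big\rVert=\big\lVert\tau^g_p\big\rVert$, and deduce $\big\lVert\eta^f_p\big\rVert=\big\lVert\eta^g_p\big\rVert$ from the Bismut--Lebeau immersion formula \cite[Theorem 0.1]{ble} (which is what the paper cites directly for this step, rather than \cite[Theorem 8.10]{b97}). Your discussion of why each term of the immersion formula localizes near $i_D(D)$ and is transported by $\varphi'$ is precisely the content the paper leaves implicit in that one-line citation.
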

\begin{proof}
In the whole proof,
$\big\lVert\cdot\big\rVert$ is always the Quillen metric.
We have
\begin{equation}
\label{eq1-pf-cor-quillen-bl}
\big\lVert\sigma^f_p\big\rVert = \big\lVert\tau^f_p\big\rVert\big\lVert\eta^f_p\big\rVert \;,\hspace{5mm}
\big\lVert\sigma^g_p\big\rVert = \big\lVert\tau^g_p\big\rVert\big\lVert\eta^g_p\big\rVert \;.
\end{equation}
By Theorem \ref{thm-quillen-bl},
we have
\begin{equation}
\label{eq2-pf-cor-quillen-bl}
\big\lVert\tau^f_p\big\rVert = \big\lVert\tau^g_p\big\rVert \;.
\end{equation}
By \cite[Theorem 0.1]{ble},
we have
\begin{equation}
\label{eq3-pf-cor-quillen-bl}
\big\lVert\eta^f_p\big\rVert = \big\lVert\eta^g_p\big\rVert \;.
\end{equation}
From \eqref{eq1-pf-cor-quillen-bl}-\eqref{eq3-pf-cor-quillen-bl},
we obtain \eqref{eq-cor-quillen-bl}.
This completes the proof.
\end{proof}

\subsection{BCOV torsion}
\label{subsect-bcov}

Let $X$ be a compact K{\"a}hler manifold of dimension $n$.

For $p=0,\cdots,n$, set
\begin{equation}
\label{eq-def-lambda-p}
\lambda_p(X) = \det H^{p,\bullet}(X)
= \bigotimes_{q=0}^n \Big( \det H^{p,q}(X) \Big)^{(-1)^q} \;.
\end{equation}
Set
\begin{equation}
\label{eq-def-lambda}
\lambda(X)
= \bigotimes_{p=1}^n \Big(\lambda_p(X)\Big)^{(-1)^pp}
= \bigotimes_{0\leqslant p,q\leqslant n} \Big( \det H^{p,q}(X) \Big)^{(-1)^{p+q}p} \;.
\end{equation}
Set
\begin{equation}
\label{eq-def-lambda-dR}
\lambda_\mathrm{dR}(X)
= \bigotimes_{k=1}^{2n} \Big(\det H^k_\mathrm{dR}(X)\Big)^{(-1)^kk}
=  \lambda(X) \otimes \overline{\lambda(X)}\;.
\end{equation}

For $\mathbb{A} = \Z,\R,\C$,
we denote by $H^\bullet_\mathrm{Sing}(X,\mathbb{A})$
the singular cohomology of $X$ with coefficients in $\mathbb{A}$.
For $k=0,\cdots,n$,
let
\begin{equation}
\sigma_{k,1},\cdots,\sigma_{k,b_k} \in
\mathrm{Im}\Big(H^k_\mathrm{Sing}(X,\Z)\rightarrow H^k_\mathrm{Sing}(X,\R)\Big)
\end{equation}
be a basis of the lattice.
We identify $H^k_\mathrm{dR}(X)$ with $H^k_\mathrm{Sing}(X,\C)$ as follows,
\begin{align}
\begin{split}
H^k_\mathrm{dR}(X) & \rightarrow H^k_\mathrm{Sing}(X,\C) \\
[\alpha] & \mapsto \Big[\mathfrak{a} \mapsto \int_\mathfrak{a}\alpha\Big]\;,
\end{split}
\end{align}
where $\alpha$ is a closed $k$-form on $X$
and $\mathfrak{a}$ is a $k$-chain in $X$.
Then $\sigma_{k,1},\cdots,\sigma_{k,b_k}$ may be viewed as elements in $H^k_\mathrm{dR}(X)$.
Set
\begin{equation}
\label{eq-def-sigma}
\sigma = \bigotimes_{k=1}^{2n} \big( \sigma_{k,1}\wedge\cdots\wedge\sigma_{k,b_k} \big)^{(-1)^kk} \in \lambda_\mathrm{dR}(X) \;,
\end{equation}
which is well-defined up to $\pm 1$.

Let $\omega$ be a K{\"a}hler form on $X$.

Let $\big\lVert\cdot\big\rVert_{\lambda_p(X),\omega}$
be the Quillen metric on $\lambda_p(X)$ associated with $\omega$.
Let \hbox{$\big\lVert\cdot\big\rVert_{\lambda(X),\omega}$}
be the metric on $\lambda(X)$
induced by \hbox{$\big\lVert\cdot\big\rVert_{\lambda_p(X),\omega}$}
via \eqref{eq-def-lambda}.
Let \hbox{$\big\lVert\cdot\big\rVert_{\lambda_\mathrm{dR}(X),\omega}$}
be the metric on $\lambda_\mathrm{dR}(X)$
induced by \hbox{$\big\lVert\cdot\big\rVert_{\lambda(X),\omega}$}
via \eqref{eq-def-lambda-dR}.
We define
\begin{equation}
\label{eq-def-bcov-torsion}
\tau_\mathrm{BCOV}(X,\omega) =
\log \big\lVert\sigma\big\rVert_{\lambda_\mathrm{dR}(X),\omega} \;.
\end{equation}

Let $\pi_\mathscr{X}: \mathscr{X}\rightarrow S$ be a locally K{\"a}hler holomorphic fibration.
For $s\in S$,
we denote $X_s = \pi_\mathscr{X}^{-1}(s)$.
We assume that $\dim X_s = n$ for any $s\in S$.

Let $H^\bullet(X)$ be the variation of Hodge structure
associated with $\pi_\mathscr{X}: \mathscr{X}\rightarrow S$,
i.e.,
\begin{align}
\begin{split}
H^k_\Z(X) & = H^k_\mathrm{Sing}(X,\Z) \;,\\
H^k_\C(X) & = H^k_\mathrm{Sing}(X,\C) = H^k_\mathrm{dR}(X) \;,\hspace{5mm}
F^r H^k_\C(X) = \bigoplus_{p=r}^k H^{p,k-p}(X) \;.
\end{split}
\end{align}
Let $\omega_{H^\bullet(X)}\in\Omega^{1,1}(S)$
be the Hodge form (see \eqref{eq-def-hodgeform}) of $H^\bullet(X)$.

Let $\omega\in\Omega^{1,1}(\mathscr{X})$ be a fiberwise K{\"a}hler form,
i.e., $\omega\big|_{X_s}$ is a K{\"a}hler form for any $s\in S$.
Let $TX \subseteq T\mathscr{X}$ be the relative tangent bundle.
Let $g^{TX}$ be the metric on $TX$ induced by $\omega$.
We denote by $\tau_\mathrm{BCOV}(X,\omega)$
the function $s\mapsto \tau_\mathrm{BCOV}\big(X_s,\omega\big|_{X_s}\big)$ on $S$.

\begin{thm}
\label{thm-anomaly}
The following identity holds,
\begin{equation}
\label{eq-anomaly}
\frac{\overline{\partial}\partial}{2\pi i} \tau_\mathrm{BCOV}(X,\omega)
= \omega_{H^\bullet(X)} + \frac{1}{12}
\int_X c_1\big(TX,g^{TX}\big)c_n\big(TX,g^{TX}\big) \;.
\end{equation}
\end{thm}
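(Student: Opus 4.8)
The plan is to deduce \eqref{eq-anomaly} from the curvature theorem for Quillen metrics of Bismut--Gillet--Soul{\'e} \cite[Theorem 0.1]{bgs1}, together with one elementary identity between characteristic forms. Both sides of \eqref{eq-anomaly} are local on $S$, so I would work over a small open set $U\subseteq S$; since $\pi_\mathscr{X}$ is locally K{\"a}hler, a preliminary reduction via the anomaly formula for Quillen metrics \cite{bgs3} (which, using the defining property of Bott--Chern forms and the same characteristic-form identity quoted below, shows that both sides of \eqref{eq-anomaly} transform in the same way under a deformation of the fiberwise K{\"a}hler form) lets me assume that $\omega\big|_{\pi_\mathscr{X}^{-1}(U)}$ is the restriction of a genuine K{\"a}hler form, so that \cite[Theorem 0.1]{bgs1} applies directly.

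The first step is to rewrite $\tau_\mathrm{BCOV}(X,\omega) = \log\big\lVert\sigma\big\rVert_{\lambda_\mathrm{dR}(X),\omega}$ in terms of the Hodge form and a Quillen metric. Over $U$, choose a nowhere vanishing holomorphic section $\tau$ of $\lambda(X)$ and write the flat (hence constant) section $\sigma$ as $\sigma = e^f\,\tau\otimes\overline{\tau}$ with $f\in\smooth(U,\C)$, using \eqref{eq-def-lambda-dR}. Since $\big\lVert\cdot\big\rVert_{\lambda_\mathrm{dR}(X),\omega}$ is the metric on $\lambda(X)\otimes\overline{\lambda(X)}$ induced by the Quillen metric $\big\lVert\cdot\big\rVert_{\lambda(X),\omega}$, one gets $\tau_\mathrm{BCOV}(X,\omega) = \mathrm{Re}\,f + \log\big\lVert\tau\big\rVert_{\lambda(X),\omega}^2$. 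The variation of Hodge structure $H^\bullet(X)$ has associated line bundles $\lambda = \lambda(X)$ and $\lambda_\mathrm{dR} = \lambda_\mathrm{dR}(X)$ (compare \eqref{eq-def-lambda-lambdadR} with \eqref{eq-def-lambda}, \eqref{eq-def-lambda-dR}), so by the definition \eqref{eq-def-hodgeform} of the Hodge form, $\frac{\overline{\partial}\partial}{2\pi i}\mathrm{Re}\,f = \omega_{H^\bullet(X)}$; and $\frac{\overline{\partial}\partial}{2\pi i}\log\big\lVert\tau\big\rVert_{\lambda(X),\omega}^2 = -c_1\big(\lambda(X),\big\lVert\cdot\big\rVert_{\lambda(X),\omega}\big)$. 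Hence
\begin{equation}
\frac{\overline{\partial}\partial}{2\pi i}\tau_\mathrm{BCOV}(X,\omega) = \omega_{H^\bullet(X)} - c_1\big(\lambda(X),\big\lVert\cdot\big\rVert_{\lambda(X),\omega}\big) \;,
\end{equation}
and it remains to show that $c_1\big(\lambda(X),\big\lVert\cdot\big\rVert_{\lambda(X),\omega}\big) = -\frac{1}{12}\int_X c_1\big(TX,g^{TX}\big)c_n\big(TX,g^{TX}\big)$.

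By \eqref{eq-def-lambda} the metric on $\lambda(X)$ is the product of the Quillen metrics $\big\lVert\cdot\big\rVert_{\lambda_p(X),\omega}$ with weights $(-1)^pp$, so the left-hand side equals $\sum_{p=1}^n(-1)^pp\,c_1\big(\lambda_p(X),\big\lVert\cdot\big\rVert_{\lambda_p(X),\omega}\big)$. Applying \cite[Theorem 0.1]{bgs1} to $\lambda_p(X) = \det H^\bullet\big(X,\Lambda^p(T^*X)\big)$ (with the sign fixed by the conventions of \textsection\ref{subsect-quillen}), this becomes $\big\{\int_X \mathrm{Td}\big(TX,g^{TX}\big)\sum_{p=0}^n(-1)^pp\,\mathrm{ch}\big(\Lambda^p(T^*X),g^{\Lambda^p(T^*X)}\big)\big\}^{(1,1)}$, where $g^{\Lambda^p(T^*X)}$ is induced by $g^{TX}$. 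So it suffices to prove the identity of characteristic forms asserting that the component of degree $2n+2$ of $\mathrm{Td}(TX)\sum_{p=0}^n(-1)^pp\,\mathrm{ch}\big(\Lambda^p(T^*X)\big)$ is $-\frac{1}{12}c_1(TX)c_n(TX)$. With $x_1,\dots,x_n$ the Chern roots of $TX$, differentiating $\sum_p t^p\,\mathrm{ch}\big(\Lambda^p(T^*X)\big) = \prod_i(1+te^{-x_i})$ at $t=-1$ gives $\sum_p(-1)^pp\,\mathrm{ch}\big(\Lambda^p(T^*X)\big) = -\sum_j e^{-x_j}\prod_{i\neq j}(1-e^{-x_i})$; multiplying by $\mathrm{Td}(TX) = \prod_i x_i/(1-e^{-x_i})$ gives $-\sum_j\big(x_j/(e^{x_j}-1)\big)\prod_{i\neq j}x_i$; and since $x/(e^x-1) = 1-\tfrac{x}{2}+\tfrac{x^2}{12}-\cdots$ while $\prod_{i\neq j}x_i$ is homogeneous of degree $n-1$, the degree-$(n+1)$ part is $-\tfrac{1}{12}\sum_j x_j^2\prod_{i\neq j}x_i = -\tfrac{1}{12}\big(\sum_j x_j\big)\prod_i x_i = -\tfrac{1}{12}c_1(TX)c_n(TX)$. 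As characteristic forms are computed by these same universal expressions in the curvature, the identity of forms follows, and combining the displays yields \eqref{eq-anomaly}.

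There is no genuine analytic obstacle here: the hard input is exactly \cite[Theorem 0.1]{bgs1}. The only delicate part is bookkeeping of signs — the Knudsen--Mumford convention in \eqref{eq-lambda-E}, the sign in the Bismut--Gillet--Soul{\'e} curvature formula, the sign in $\frac{\overline{\partial}\partial}{2\pi i}\log\big\lVert\tau\big\rVert^2 = -c_1$, and the $(-1)^pp$ weighting of \eqref{eq-def-lambda} — all of which must line up so that the minus signs of the characteristic-form identity become the plus sign of \eqref{eq-anomaly}; the case $\dim X = 1$ provides a convenient consistency check. A secondary point is the reduction, noted in the first paragraph, from an arbitrary fiberwise K{\"a}hler form to one for which \cite[Theorem 0.1]{bgs1} literally applies.
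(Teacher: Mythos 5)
Your argument is correct and is essentially the paper's own proof: both decompose $\frac{\overline{\partial}\partial}{2\pi i}\tau_\mathrm{BCOV}(X,\omega)$ as $\omega_{H^\bullet(X)}-c_1\big(\lambda(X),\big\lVert\cdot\big\rVert_{\lambda(X),\omega}\big)$ via the Poincar\'e--Lelong formula and the factorization $\sigma=e^f\tau\otimes\overline{\tau}$, and then evaluate the Chern form by \cite[Theorem 0.1]{bgs1}. The only differences are cosmetic: you carry out the Chern-root computation of $\big\{\mathrm{Td}(TX)\sum_p(-1)^pp\,\mathrm{ch}(\Lambda^p T^*X)\big\}^{(n+1,n+1)}=-\tfrac{1}{12}c_1c_n$ explicitly where the paper cites \cite[page 374]{bcov2}, and you make explicit the (correct) local reduction to a genuinely K\"ahler total space that the paper leaves implicit in ``we may assume $S$ is a unit polydisc.''
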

\begin{proof}
Let $\big(H^{p,q}(X)\big)_{0\leqslant p,q\leqslant n}$ be the fiberwise Dolbeault cohomologies,
which are holomorphic vector bundles over $S$.
Let $\lambda_p(X)$ and $\lambda(X)$ be as in \eqref{eq-def-lambda-p} and \eqref{eq-def-lambda},
which are holomorphic line bundles over $S$.
Let $\big\lVert\cdot\big\rVert_{\lambda_p(X),\omega}$
be the metric on $\lambda_p(X)$ such that
its restriction to $\lambda_p(X_s)$ is the Quillen metric associated with $\omega\big|_{X_s}$.
Let $\big\lVert\cdot\big\rVert_{\lambda(X),\omega}$
be the metric on $\lambda(X)$ induced by $\big\lVert\cdot\big\rVert_{\lambda_p(X),\omega}$.

We may assume that $S$ a unit polydisc in $\C^k$,
where $k = \dim S$.
Let $\tau\in H^0(S,\lambda(X))$ and $f\in\smooth(S,\C)$
be as in \eqref{eq-def-f}.
By the Poincar{\'e}-Lelong formula,
 \eqref{eq-def-f}, \eqref{eq-def-hodgeform} and \eqref{eq-def-bcov-torsion},
we have
\begin{align}
\label{eq1-pf-thm-anomaly}
\begin{split}
\frac{\overline{\partial}\partial}{2\pi i} \tau_\mathrm{BCOV}(X,\omega)
& = \frac{\overline{\partial}\partial \mathrm{Re} f}{2\pi i}
+ \frac{\overline{\partial}\partial \log \big\lVert\tau\big\rVert^2_{\lambda(X),\omega}}{2\pi i} \\
& = \omega_{H^\bullet(X)}
- c_1\big(\lambda(X),\big\lVert\cdot\big\rVert_{\lambda(X),\omega}\big) \;.
\end{split}
\end{align}
Let $g^{\Lambda^\bullet(T^*X)}$ be
the Hermitian metric on $\Lambda^\bullet(T^*X)$ induced by $g^{TX}$.
By \cite[Theorem 0.1]{bgs1}, \cite[page 374]{bcov2} and \eqref{eq-def-lambda},
we have
\begin{align}
\label{eq2-pf-thm-anomaly}
\begin{split}
& c_1\big(\lambda(X),\big\lVert\cdot\big\rVert_{\lambda(X),\omega}\big) \\
& = \sum_{p=1}^n (-1)^pp
c_1\big(\lambda_p(X),\big\lVert\cdot\big\rVert_{\lambda_p(X),\omega}\big) \\
& = \sum_{p=1}^n (-1)^pp
\bigg\{ \int_X \mathrm{Td}\big(TX,g^{TX}\big)
\mathrm{ch}\big(\Lambda^p(T^*X),g^{\Lambda^p(T^*X)}\big) \bigg\}^{(1,1)} \\
& = - \frac{1}{12}
\int_X c_1\big(TX,g^{TX}\big)c_n\big(TX,g^{TX}\big) \;.
\end{split}
\end{align}
From \eqref{eq1-pf-thm-anomaly}
and \eqref{eq2-pf-thm-anomaly},
we obtain \eqref{eq-anomaly}.
This completes the proof.
\end{proof}

\section{BCOV invariant $\tau(X,Y)$}
\label{sect-bcov}

\subsection{Construction of $\tau(X,Y)$ and proof of Theorem \ref{intro-thm-curvature}}
\label{subsect-construction}

Let $X$ be a compact K{\"a}hler manifold of dimension $n$.
Let $K_X$ be its canonical bundle.
Let $m\in\Z\backslash\{0,-1\}$.
Let $K_X^m$ be the $m$-th tensor power of $K_X$.
We assume that $H^0(X,K_X^m)\neq 0$.
Let
\begin{equation}
\gamma \in H^0(X,K_X^m)\backslash\{0\} \;.
\end{equation}
Set
\begin{equation}
Y = \mathrm{Div}(\gamma) \;.
\end{equation}
We assume that $Y$ is smooth and reduced.
We will call $(X,Y)$ an $m$-Calabi-Yau pair.

Let $\nabla^{K_X^m}$ be a connection on $K_X^m$.
Let $N_Y$ be the normal bundle of $Y \subseteq X$.
Set
\begin{equation}
\label{eq-gammaprim}
\gamma' = \nabla^{K_X^m} \gamma \big|_Y
\in H^0(Y,N^{-1}_Y \otimes K_X^m)
= H^0(Y,N^{-m-1}_Y \otimes K_Y^m) \;,
\end{equation}
which is independent of $\nabla^{K_X^m}$.

Let $\omega$ be a K{\"a}hler form on $X$.

Let $g^{TX}$ be the metric on $TX$ induced by $\omega$.
Let $\big|\cdot\big|$ be the norm on $K_X^m$ induced by $g^{TX}$.
Set
\begin{equation}
\label{eq-def-aX}
a_X(\gamma,\omega)
=  \frac{1}{12} \int_X c_n\big(TX,g^{TX}\big) \log \big|\gamma\big|^2 \;.
\end{equation}
Let $g^{TY}$ be the metric on $TY$ induced by $g^{TX}$.
Let $\big|\cdot\big|$ be the norm
on $N^{-m-1}_Y \otimes K_Y^m$ induced by $g^{TX}$.
Set
\begin{equation}
\label{eq-def-aY}
a_Y(\gamma,\omega)
= \frac{1}{12}
\int_Y c_{n-1}\big(TY,g^{TY}\big) \log \big|\gamma'\big|^2 \;.
\end{equation}
Let $\widetilde{c}(\cdot,\cdot,\cdot)$ be as in \eqref{eq-def-BC}.
Set
\begin{equation}
\label{eq-def-bY}
b_Y(\omega) = \frac{1}{12} \int_Y \widetilde{c}\big(TY,TX\big|_Y,g^{TX}\big|_Y\big) \;.
\end{equation}
Let $\tau_\mathrm{BCOV}(\cdot,\cdot)$ be as in \eqref{eq-def-bcov-torsion}.
Set
\begin{align}
\label{eq-def-tau-omega}
\begin{split}
\tau(X,\gamma,\omega) =
& \; \tau_\mathrm{BCOV}(X,\omega)
- \frac{1}{m} a_X(\gamma,\omega) \\
& \;  - \frac{1}{m+1} \tau_\mathrm{BCOV}(Y,\omega|_Y)
+ \frac{1}{m(m+1)} a_Y(\gamma,\omega)
+ \frac{1}{m}b_Y(\omega) \;.
\end{split}
\end{align}

\begin{thm}
\label{thm-ind-omega}
The real number $\tau(X,\gamma,\omega)$ is independent of $\omega$.
\end{thm}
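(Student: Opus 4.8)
The plan is to connect any two K\"ahler forms through a single holomorphic family over a \emph{compact} base and to invoke the anomaly formula, Theorem \ref{thm-anomaly}. Fix K\"ahler forms $\omega_0,\omega_1$ on $X$ and two distinct points $s_0,s_1\in{\C}P^1$, choose $\chi\in\smooth({\C}P^1,[0,1])$ with $\chi\equiv 0$ near $s_0$ and $\chi\equiv 1$ near $s_1$, and set $\mathscr{X}=X\times{\C}P^1$, $\mathscr{Y}=Y\times{\C}P^1$, with $\pi_\mathscr{X},\pi_\mathscr{Y}$ the projections to ${\C}P^1$ and $\mathrm{pr}_X:\mathscr{X}\rightarrow X$. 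Then
\[
\omega:=\mathrm{pr}_X^*\omega_0+\chi\cdot\mathrm{pr}_X^*(\omega_1-\omega_0)\in\Omega^{1,1}(\mathscr{X})
\]
is a fiberwise K\"ahler form: its restriction to $X\times\{s\}$ is the K\"ahler form $(1-\chi(s))\omega_0+\chi(s)\omega_1$, equal to $\omega_0$ over $s_0$ and to $\omega_1$ over $s_1$; likewise $\omega|_\mathscr{Y}$ is fiberwise K\"ahler. Since $(X_s,Y_s)=(X,Y)$ is an $m$-Calabi-Yau pair for every $s$, formula \eqref{eq-def-tau-omega} defines a smooth function $u: s\mapsto\tau(X,\gamma,\omega|_{X_s})$ on ${\C}P^1$ with $u(s_0)=\tau(X,\gamma,\omega_0)$ and $u(s_1)=\tau(X,\gamma,\omega_1)$. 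As ${\C}P^1$ is compact, it suffices to prove $\overline{\partial}\partial u=0$; then $u$ is pluriharmonic, hence constant, and the theorem follows.

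Next I would compute $\overline{\partial}\partial u$ by differentiating each term of \eqref{eq-def-tau-omega}, commuting $\overline{\partial}\partial$ with integration along the fibers. For the two BCOV torsion terms, Theorem \ref{thm-anomaly} applies, and the Hodge forms $\omega_{H^\bullet(X)},\omega_{H^\bullet(Y)}$ vanish because the variations of Hodge structure of the product families are constant (Proposition \ref{prop-hodgeform} with flat metrics):
\[
\frac{\overline{\partial}\partial}{2\pi i}\tau_\mathrm{BCOV}(X,\omega)=\frac{1}{12}\int_X c_1\big(TX,g^{TX}\big)c_n\big(TX,g^{TX}\big),\qquad
\frac{\overline{\partial}\partial}{2\pi i}\tau_\mathrm{BCOV}(Y,\omega|_Y)=\frac{1}{12}\int_Y c_1\big(TY,g^{TY}\big)c_{n-1}\big(TY,g^{TY}\big).
\]
For $a_X$, see \eqref{eq-def-aX}: the section $\gamma$ is a holomorphic section of the relative bundle $K^m_{\mathscr{X}/{\C}P^1}$ with divisor $\mathscr{Y}$, and $c_n(TX,g^{TX})$ is closed, so Poincar\'e-Lelong gives $\frac{\overline{\partial}\partial}{2\pi i}\log|\gamma|^2=[\mathscr{Y}]+m\,c_1(TX,g^{TX})$ and hence
\[
\frac{\overline{\partial}\partial}{2\pi i}a_X(\gamma,\omega)=\frac{1}{12}\int_Y c_n\big(TX|_Y,g^{TX}|_Y\big)+\frac{m}{12}\int_X c_1\big(TX,g^{TX}\big)c_n\big(TX,g^{TX}\big).
\]
For $a_Y$, see \eqref{eq-def-aY} and \eqref{eq-gammaprim}: $\gamma'$ is a nowhere vanishing holomorphic section of $N_Y^{-m-1}\otimes K_Y^m$, so $\frac{\overline{\partial}\partial}{2\pi i}\log|\gamma'|^2=(m+1)c_1(N_Y,g^{N_Y})+m\,c_1(TY,g^{TY})$ and
\[
\frac{\overline{\partial}\partial}{2\pi i}a_Y(\gamma,\omega)=\frac{m+1}{12}\int_Y c_{n-1}\big(TY,g^{TY}\big)c_1\big(N_Y,g^{N_Y}\big)+\frac{m}{12}\int_Y c_{n-1}\big(TY,g^{TY}\big)c_1\big(TY,g^{TY}\big).
\]
For $b_Y$, see \eqref{eq-def-bY}: applying the defining property \eqref{eq-def-BC} of the Bott-Chern form to $0\rightarrow TY\rightarrow TX|_Y\rightarrow N_Y\rightarrow 0$ and taking the top-degree component,
\[
\frac{\overline{\partial}\partial}{2\pi i}b_Y(\omega)=\frac{1}{12}\int_Y\Big(c_n\big(TX|_Y,g^{TX}|_Y\big)-c_{n-1}\big(TY,g^{TY}\big)c_1\big(N_Y,g^{N_Y}\big)\Big).
\]

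Finally I would add these five contributions with the weights of \eqref{eq-def-tau-omega}: the $\int_X c_1(TX)c_n(TX)$ terms cancel between $\tau_\mathrm{BCOV}(X,\omega)$ and $a_X$; the $\int_Y c_n(TX|_Y)$ terms cancel between $a_X$ and $b_Y$; the $\int_Y c_{n-1}(TY)c_1(TY)$ terms cancel between $\tau_\mathrm{BCOV}(Y,\omega|_Y)$ and $a_Y$; and the $\int_Y c_{n-1}(TY)c_1(N_Y)$ terms cancel between $a_Y$ and $b_Y$. Hence $\overline{\partial}\partial u=0$, so $u$ is constant and $\tau(X,\gamma,\omega_0)=\tau(X,\gamma,\omega_1)$.

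The cancellation itself is short bookkeeping — and it is precisely what forces the coefficients $\tfrac1m,\tfrac1{m+1},\tfrac1{m(m+1)}$ in \eqref{eq-def-tau-omega}. The parts needing care are the setup and the analytic justifications: realizing two arbitrary K\"ahler forms as fibers of one fiberwise-K\"ahler family over the compact base ${\C}P^1$; checking that the Chern-form and Bott-Chern identities above hold at the level of differential forms on the total spaces $\mathscr{X},\mathscr{Y}$ rather than merely in cohomology; and justifying that $\overline{\partial}\partial$ commutes with integration along the fibers even for the log-singular integrand $c_n(TX,g^{TX})\log|\gamma|^2$, via Poincar\'e-Lelong for currents.
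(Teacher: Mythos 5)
Your proposal is correct and follows essentially the same route as the paper: join the two K\"ahler forms by a family over the compact base ${\C}P^1$, compute $\overline{\partial}\partial$ of each term of \eqref{eq-def-tau-omega} via Theorem \ref{thm-anomaly} (with vanishing Hodge forms for the product family), the Poincar\'e--Lelong formula, and the Bott--Chern identity \eqref{eq-def-BC}, and observe the total cancellation forced by the coefficients. Your explicit cutoff construction of the family and your term-by-term cancellation bookkeeping match the paper's \eqref{eq-anomaly-XY}--\eqref{eq-bc2} (where, incidentally, your $c_{n-1}(TY)c_1(N_Y)$ is the correct form of the misprinted $c_1(TY)c_{n-1}(N_Y)$ in \eqref{eq-bc2}).
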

\begin{proof}
Let $\mu_1$ and $\mu_2$ be two K{\"a}hler forms on $X$.
Let $\big(\omega_s\big)_{s\in{\C}P^1}$ be a smooth family of K{\"a}hler forms on $X$
such that $\omega_0 = \mu_1$ and $\omega_\infty = \mu_2$.

Let $\tau(X,\gamma,\omega)$
be the function $s\mapsto \tau(X,\gamma,\omega_s)$ on ${\C}P^1$.
In the same way,
we have functions
$\tau_\mathrm{BCOV}(X,\omega)$, $\tau_\mathrm{BCOV}(Y,\omega|_Y)$,
$a_X(\gamma,\omega)$, $a_Y(\gamma,\omega)$ and $b_Y(\omega)$ on ${\C}P^1$.

We view $TX$ (resp. $TY$) as a holomorphic vector bundle over $X\times {\C}P^1$ (resp. $Y\times {\C}P^1$).
Let $g^{TX}$ be the metric on $TX$ induced by $\big(\omega_s\big)_{s\in{\C}P^1}$,
i.e., $g^{TX}\big|_{X\times\{s\}}$ is induced by $\omega_s$.
Let $g^{TY}$ be the metric on $TY$ induced by $g^{TX}$.
By \eqref{eq-anomaly},
we have
\begin{align}
\label{eq-anomaly-XY}
\begin{split}
\frac{\overline{\partial}\partial}{2\pi i}
\tau_\mathrm{BCOV}(X,\omega)
& = \frac{1}{12}
\int_X c_1\big(TX,g^{TX}\big)c_n\big(TX,g^{TX}\big) \;,\\
\frac{\overline{\partial}\partial}{2\pi i}
 \tau_\mathrm{BCOV}(Y,\omega|_Y)
& = \frac{1}{12}
\int_Y c_1\big(TY,g^{TY}\big)c_{n-1}\big(TY,g^{TY}\big) \;.
\end{split}
\end{align}

We view $N_Y$ as a holomorphic line bundle over $Y\times {\C}P^1$.
Let $g^{N_Y}$ be the metric on $N_Y$ induced by $g^{TX}$.
By the Poincar{\'e}-Lelong formula,
\eqref{eq-def-aX} and \eqref{eq-def-aY},
we have
\begin{align}
\label{eq-bc1}
\begin{split}
\frac{\overline{\partial}\partial}{2\pi i} a_X(\gamma,\omega) & =
\frac{m}{12} \int_X c_1\big(TX,g^{TX}\big)c_n\big(TX,g^{TX}\big)
+ \frac{1}{12} \int_Y c_n\big(TX,g^{TX}\big) \;,\\
\frac{\overline{\partial}\partial}{2\pi i} a_Y(\gamma,\omega) & =
\frac{1}{12} \int_Y
\Big( m c_1\big(TY,g^{TY}\big) + (m+1) c_1\big(N_Y,g^{N_Y}\big) \Big)
c_{n-1}\big(TY,g^{TY}\big) \;.
\end{split}
\end{align}

By \eqref{eq-def-BC} and \eqref{eq-def-bY},
we have
\begin{equation}
\label{eq-bc2}
\frac{\overline{\partial}\partial}{2 \pi i} b_Y(\omega) =
\frac{1}{12} \int_Y c_n\big(TX,g^{TX}\big)
- \frac{1}{12} \int_Y c_1\big(TY,g^{TY}\big)c_{n-1}\big(N_Y,g^{N_Y}\big) \;.
\end{equation}

From \eqref{eq-def-tau-omega}-\eqref{eq-bc2},
we obtain
\begin{equation}
\overline{\partial}\partial\tau(X,\gamma,\omega) = 0 \;.
\end{equation}
Hence $\tau(X,\gamma,\omega)$ is a constant function on ${\C}P^1$.
We get $\tau(X,\gamma,\mu_1) = \tau(X,\gamma,\mu_2)$.
This completes the proof.
\end{proof}

Recall that
$\big|\cdot\big|^{1/m}: \smooth(X,K_X^m\otimes\overline{K_X^m})\rightarrow\Omega^{n,n}(X)$
was defined in the paragraph containing \eqref{eq0-intro-wp}.
For $z\in\C^*$,
a direct calculation yields
\begin{align}
\label{eq-mult-z}
\begin{split}
& a_X(z\gamma,\omega) - a_X(\gamma,\omega)
= \frac{\chi(X)}{12} \log |z|^2 \;,\\
& a_Y(z\gamma,\omega) - a_Y(\gamma,\omega)
= \frac{\chi(Y)}{12} \log |z|^2 \;, \\
& \log \int_X \big|z\gamma\overline{z\gamma}\big|^{1/m}
- \log \int_X \big|\gamma\overline{\gamma}\big|^{1/m}
= \frac{1}{m} \log |z|^2 \;.
\end{split}
\end{align}
Recall that $w_m(X)$ was defined by \eqref{eq-def-w}.
By \eqref{eq-def-w}, \eqref{eq-def-tau-omega} and \eqref{eq-mult-z},
we have
\begin{align}
\label{eq-ind-mult}
\begin{split}
& \tau(X,z\gamma,\omega)
+ w_m(X) \log \int_X \big|z\gamma\overline{z\gamma}\big|^{1/m} \\
& = \tau(X,\gamma,\omega)
+ w_m(X) \log \int_X \big|\gamma\overline{\gamma}\big|^{1/m}\;.
\end{split}
\end{align}

\begin{defn}
\label{def-tau}
The BCOV invariant of $(X,Y)$ is defined by
\begin{equation}
\label{eq-def-tau}
\tau(X,Y) = \tau(X,\gamma,\omega)
+ w_m(X) \log \int_X \big|\gamma\overline{\gamma}\big|^{1/m} \;.
\end{equation}
By Theorem \ref{thm-ind-omega} and \eqref{eq-ind-mult},
$\tau(X,Y)$ is well-defined.
\end{defn}

\begin{proof}[Proof of Theorem \ref{intro-thm-curvature}]
We may assume that $S$ is a unit polydisc in $\C^k$,
where $k = \dim S$.
Let $\omega\in\Omega^{1,1}(\mathscr{X})$ be a fiberwise K{\"a}hler form.
Let $g^{TX}$ (resp. $g^{TY}$)
be the metric on $TX$ (resp. $TY$)
induced by $\omega$.
Let $\tau_\mathrm{BCOV}(X,\omega)$
(resp. $\tau_\mathrm{BCOV}(Y,\omega|_Y)$)
be the function $s\mapsto\tau_\mathrm{BCOV}(X_s,\omega|_{X_s})$
(resp. $s\mapsto\tau_\mathrm{BCOV}(Y_s,\omega|_{Y_s})$)
on $S$.
By \eqref{eq-anomaly},
we have
\begin{align}
\label{eq1-pf-thm-curvature}
\begin{split}
\frac{\overline{\partial}\partial}{2\pi i} \tau_\mathrm{BCOV}(X,\omega)
& = \omega_{H^\bullet(X)}
+ \frac{1}{12}
\int_X c_1\big(TX,g^{TX}\big)c_n\big(TX,g^{TX}\big) \;,\\
\frac{\overline{\partial}\partial}{2\pi i} \tau_\mathrm{BCOV}(Y,\omega|_Y)
& = \omega_{H^\bullet(Y)}
+ \frac{1}{12}
\int_Y c_1\big(TY,g^{TY}\big)c_{n-1}\big(TY,g^{TY}\big) \;.
\end{split}
\end{align}
Noticing that \eqref{eq-bc1} and \eqref{eq-bc2} equally hold for
$\big(\pi_\mathscr{X}: \mathscr{X} \rightarrow S, \pi_\mathscr{Y}: \mathscr{Y} \rightarrow S\big)$,
formula \eqref{eq-intro-thm-curvature} follows from
\eqref{eq-bc1},
\eqref{eq-bc2},
\eqref{eq1-pf-thm-curvature} and
the definition of Weil-Petersson form
(see \eqref{eq0-intro-wp}, \eqref{eq-intro-wp}).
This completes the proof.
\end{proof}

\subsection{An auxiliary invariant}

Let $X$ be a compact K{\"a}hler manifold of dimension $n$.
Let $K_X$ be its canonical bundle.
We assume that $\mathscr{M}(X,K_X) \neq 0$.
Let
\begin{equation}
\gamma \in \mathscr{M}(X,K_X) \backslash\{0\} \;.
\end{equation}
We assume that
\begin{equation}
\label{eq-div-gammamero}
\mathrm{Div}(\gamma) = Y_1 - l Y_2
\end{equation}
with $Y_1,Y_2\subseteq X$ smooth and reduced,
$Y_1 \cap Y_2 = \emptyset$ and $l\geqslant 2$.

For $k\in\N$,
we denote by $\big(T^*X\oplus\overline{T^*X}\big)^{\otimes k}$
the $k$-th tensor power of $T^*X\oplus\overline{T^*X}$.
We denote $E_k = \big(T^*X\oplus\overline{T^*X}\big)^{\otimes k} \otimes K_X^{-1}$.
Let $\nabla^{E_k}$ be a connection on $E_k$.
Let
\begin{equation}
\gamma^{-1} \in \mathscr{M}(X,K_X^{-1}) = \mathscr{M}(X,E_0)
\end{equation}
be the inverse of $\gamma$.
Let $N_{Y_2}$ be the normal bundle of $Y_2\subseteq X$.
Set
\begin{equation}
\gamma'_2 = \nabla^{E_{l-1}}\cdots\nabla^{E_0}\gamma^{-1}\big|_{Y_2}
\in H^0(Y_2,N_{Y_2}^{-l} \otimes K_X^{-1}) = H^0(Y_2,N_{Y_2}^{-l+1} \otimes K_{Y_2}^{-1}) \;,
\end{equation}
which is independent of $\big(\nabla^{E_k}\big)_{k=0,\cdots,l-1}$.

Let $\omega$ be a K{\"a}hler form on $X$.

Let $g^{TY_2}$ be the metric on $TY_2$ induced by $\omega$.
Let $\big|\cdot\big|$ be the norm on $N_{Y_2}^{-l+1}\otimes K_{Y_2}^{-1}$
induced by $\omega$.
Set
\begin{equation}
a_{Y_2}(\gamma,\omega) = \frac{1}{12} \int_{Y_2} c_{n-1}\big(TY_2,g^{TY_2}\big)
\log \big|\gamma'_2\big|^2 \;.
\end{equation}
Let $a_X(\gamma,\omega)$ be as in \eqref{eq-def-aX}.
Let $a_{Y_1}(\gamma,\omega)$ be as in \eqref{eq-def-aY} with $Y$ replaced by $Y_1$.
Let $b_{Y_1}(\omega)$ (resp. $b_{Y_2}(\omega)$) be as in \eqref{eq-def-bY} with $Y$ replaced by $Y_1$ (resp. $Y_2$).
Set
\begin{align}
\label{eq-def-tau-omega-aux}
\begin{split}
\tau(X,\gamma,\omega) & =
\tau_\mathrm{BCOV}(X,\omega)
- a_X(\gamma,\omega) \\
& \hspace{5mm}
- \frac{1}{2} \tau_\mathrm{BCOV}(Y_1,\omega|_{Y_1})
+ \frac{1}{2} a_{Y_1}(\gamma,\omega)
+ b_{Y_1}(\omega) \\
& \hspace{5mm}
- \frac{l}{l-1} \tau_\mathrm{BCOV}(Y_2,\omega|_{Y_2})
- \frac{l}{l-1} a_{Y_2}(\gamma,\omega)
- l b_{Y_2}(\omega) \;.
\end{split}
\end{align}
In particular,
if $\gamma\in H^0(X,K_X)$,
then \eqref{eq-def-tau-omega-aux} is compatible with \eqref{eq-def-tau-omega}.

\begin{thm}
\label{thm-ind-omega-aux}
The real number $\tau(X,\gamma,\omega)$ is independent of $\omega$.
\end{thm}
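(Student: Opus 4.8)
The plan is to mimic exactly the proof of Theorem \ref{thm-ind-omega}: connect two arbitrary K{\"a}hler forms $\mu_1,\mu_2$ by a smooth family $(\omega_s)_{s\in{\C}P^1}$, view all the relevant bundles ($TX$, $TY_1$, $TY_2$, $N_{Y_1}$, $N_{Y_2}$) as holomorphic bundles over the product with ${\C}P^1$, equip them with the fiberwise metrics, and show that $\overline{\partial}\partial\tau(X,\gamma,\omega)=0$ as a function on ${\C}P^1$. Since a function on ${\C}P^1$ annihilated by $\overline{\partial}\partial$ is constant, this gives $\tau(X,\gamma,\mu_1)=\tau(X,\gamma,\mu_2)$.

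First I would record the curvature identities for each term. For the BCOV torsions $\tau_\mathrm{BCOV}(X,\omega)$, $\tau_\mathrm{BCOV}(Y_1,\omega|_{Y_1})$, $\tau_\mathrm{BCOV}(Y_2,\omega|_{Y_2})$ these come directly from Theorem \ref{thm-anomaly} (in the version with all Hodge forms absent, since here $X$, $Y_1$, $Y_2$ are fixed and only the metric varies over ${\C}P^1$, so the variation of Hodge structure is trivial and $\omega_{H^\bullet}=0$): each contributes $\tfrac{1}{12}\int c_1 c_{\mathrm{top}}$ of the respective tangent bundle. For $a_X(\gamma,\omega)$ and $a_{Y_1}(\gamma,\omega)$ the Poincar{\'e}--Lelong computations are precisely \eqref{eq-bc1}; I need the analogue for $a_{Y_2}(\gamma,\omega)$. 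Here the section is $\gamma_2'\in H^0(Y_2,N_{Y_2}^{-l+1}\otimes K_{Y_2}^{-1})$, so differentiating $\log|\gamma_2'|^2$ and applying Poincar{\'e}--Lelong (noting $\gamma_2'$ is nowhere vanishing on $Y_2$, hence no divisor term) yields
\begin{equation}
\frac{\overline{\partial}\partial}{2\pi i}a_{Y_2}(\gamma,\omega)=\frac{1}{12}\int_{Y_2}\Big(-(l-1)c_1(N_{Y_2},g^{N_{Y_2}})-c_1(TY_2,g^{TY_2})\Big)c_{n-1}(TY_2,g^{TY_2})\;.
\end{equation}
For the Bott-Chern terms $b_{Y_1}(\omega)$, $b_{Y_2}(\omega)$ the identity is \eqref{eq-bc2} applied to $Y_1$ and to $Y_2$ respectively, using \eqref{eq-def-BC} with the short exact sequence $0\to TY_i\to TX|_{Y_i}\to N_{Y_i}\to 0$.

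Then I would assemble: plug all these into the $\overline{\partial}\partial$ of \eqref{eq-def-tau-omega-aux} and check the total vanishes. The $X$-contributions: $\tfrac{1}{12}\int_X c_1(TX)c_n(TX)$ from $\tau_\mathrm{BCOV}(X,\omega)$ is cancelled by the same term (with coefficient $1$, since the multiplier of $a_X$ in \eqref{eq-def-tau-omega-aux} is $1$ and \eqref{eq-bc1} gives a factor $m=1$ here because $\gamma\in\mathscr{M}(X,K_X)$) coming from $-a_X(\gamma,\omega)$. The leftover $-\tfrac{1}{12}\int_{Y_1}c_n(TX)$ from $-a_X$ --- wait, more carefully: $\mathrm{Div}(\gamma)=Y_1-lY_2$, so Poincar{\'e}--Lelong for $a_X(\gamma,\omega)=\tfrac{1}{12}\int_X c_n(TX,g^{TX})\log|\gamma|^2$ produces $\tfrac{1}{12}\int_X c_1(TX)c_n(TX)+\tfrac{1}{12}\int_{Y_1}c_n(TX)-\tfrac{l}{12}\int_{Y_2}c_n(TX)$. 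The $Y_1$ terms then cancel among $-a_X$, the $Y_1$-block of $\tau_\mathrm{BCOV}$, $a_{Y_1}$ and $b_{Y_1}$ exactly as in the proof of Theorem \ref{thm-ind-omega} (the $Y_1$-block of \eqref{eq-def-tau-omega-aux} is literally the $m=1$ specialization of the tail of \eqref{eq-def-tau-omega}). The genuinely new bookkeeping is the $Y_2$-block: one checks that $-\tfrac{l}{12}\int_{Y_2}c_n(TX)$ (from $-a_X$) together with the $\overline{\partial}\partial$ of $-\tfrac{l}{l-1}\tau_\mathrm{BCOV}(Y_2,\omega|_{Y_2})-\tfrac{l}{l-1}a_{Y_2}(\gamma,\omega)-lb_{Y_2}(\omega)$ sums to zero, using the three $Y_2$-curvature formulas above and the exact sequence on $Y_2$. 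The coefficients $-\tfrac{l}{l-1}$ and $-l$ are engineered precisely so that the $c_1(N_{Y_2})c_{n-1}(TY_2)$ and $c_1(TY_2)c_{n-1}(TY_2)$ and $\int_{Y_2}c_n(TX)$ contributions all cancel; this is the analogue of how $-\tfrac{1}{m+1}$, $\tfrac{1}{m(m+1)}$, $\tfrac{1}{m}$ were chosen in \eqref{eq-def-tau-omega}.

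The main obstacle is purely computational: verifying that the $Y_2$-block cancels requires correctly expanding $c_n(TX|_{Y_2})=c(TY_2)c(N_{Y_2})$ in the degree-$n$ part restricted to the $(n-1)$-dimensional $Y_2$ (so $c_n(TX|_{Y_2})=c_1(N_{Y_2})c_{n-1}(TY_2)$ up to exact forms, via the Bott-Chern identity \eqref{eq-def-bY}-\eqref{eq-def-BC}), and then matching the rational coefficients $1$, $\tfrac{l}{l-1}$, $l$ against the factors $-(l-1)$, $-1$ appearing in the $a_{Y_2}$ curvature. I would expect no conceptual difficulty beyond carefully tracking signs and the shift from $K_Y^m$-type sections to $K_{Y_2}^{-1}$-type sections; once the $Y_2$-block is shown to vanish, the argument concludes exactly as in Theorem \ref{thm-ind-omega}. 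This completes the plan.
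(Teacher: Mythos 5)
Your overall strategy is exactly the paper's: the published proof of Theorem \ref{thm-ind-omega-aux} consists of the single sentence ``We proceed in the same way as in the proof of Theorem \ref{thm-ind-omega}'', and your plan fills in precisely the bookkeeping that sentence leaves implicit. The split into an $X$-block, a $Y_1$-block (which, as you say, is literally the $m=1$ tail of \eqref{eq-def-tau-omega}) and a new $Y_2$-block is right, and your $X$- and $Y_1$-computations are correct.

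However, the one genuinely new formula you display --- the curvature of $a_{Y_2}(\gamma,\omega)$ --- has a sign error, and it is exactly the input on which the new cancellation hinges. The section $\gamma_2'$ is a nowhere vanishing holomorphic section of $L=N_{Y_2}^{-l+1}\otimes K_{Y_2}^{-1}$, so with the paper's convention (compare \eqref{eq13-pf-prop-resol}, or the derivation of \eqref{eq-bc1}) one has
\begin{equation*}
\frac{\overline{\partial}\partial}{2\pi i}\log\big|\gamma_2'\big|^2=-c_1\big(L\big)
=(l-1)\,c_1\big(N_{Y_2},g^{N_{Y_2}}\big)-c_1\big(TY_2,g^{TY_2}\big)\;:
\end{equation*}
since $c_1(N_{Y_2}^{-l+1})=-(l-1)c_1(N_{Y_2})$, the overall minus sign in $-c_1(L)$ flips this back to $+(l-1)c_1(N_{Y_2})$. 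You applied that minus sign correctly to the $K_{Y_2}^{-1}$ factor (your $-c_1(TY_2)$) but not to the $N_{Y_2}^{-l+1}$ factor, writing $-(l-1)c_1(N_{Y_2})$. This is not a harmless convention choice: with your sign the $Y_2$-block together with the leftover $+\frac{l}{12}\int_{Y_2}c_n(TX,g^{TX})$ from $-a_X$ sums to $\frac{l}{6}\int_{Y_2}c_1(N_{Y_2},g^{N_{Y_2}})c_{n-1}(TY_2,g^{TY_2})$, which is nonzero in general, so the verification you defer as ``purely computational'' would fail as stated. With the corrected sign, $-\frac{l}{l-1}\frac{\overline{\partial}\partial}{2\pi i}\big(\tau_\mathrm{BCOV}(Y_2,\omega|_{Y_2})+a_{Y_2}(\gamma,\omega)\big)=-\frac{l}{12}\int_{Y_2}c_1(N_{Y_2},g^{N_{Y_2}})c_{n-1}(TY_2,g^{TY_2})$, and this cancels against $-l\,\frac{\overline{\partial}\partial}{2\pi i}b_{Y_2}(\omega)$ and the $a_X$-leftover exactly as the coefficients $-\frac{l}{l-1}$ and $-l$ are designed to arrange; the rest of your argument then goes through and coincides with the paper's.
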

\begin{proof}
We proceed in the same way as in the proof of Theorem \ref{thm-ind-omega}.
\end{proof}

Set
\begin{equation}
\label{eq-def-tau-aux}
\tau(X,\gamma) = \tau(X,\gamma,\omega) \;.
\end{equation}
By Theorem \ref{thm-ind-omega-aux},
$\tau(X,\gamma)$ is well-defined.

Set
\begin{equation}
\label{eq-def-w-gamma}
w(X,\gamma) = \frac{\chi(X)}{12} - \frac{\chi(Y_1)}{24} - \frac{l\chi(Y_2)}{12(l-1)} \;.
\end{equation}
In particular,
if $\gamma\in H^0(X,K_X)$,
then
\begin{equation}
\label{eq-w-w1}
w(X,\gamma) = w_1(X) \;.
\end{equation}

For $z\in\C^*$,
the same calculation as in \eqref{eq-mult-z} yields
\begin{equation}
\label{eq-z}
\tau(X,z\gamma) - \tau(X,\gamma) =
- w(X,\gamma) \log |z|^2 \;.
\end{equation}

\begin{rem}
\label{rem-tau-aux-bl}
Let $Z\subseteq X$ be a closed complex submanifold of codimension $2$
such that $Z\cap(Y_1 \cup Y_2) = \emptyset$.
Let $f: X' \rightarrow X$ be the blow-up along $Z$.
Let $Y_1$, $Y_2$ and $l$ be as in \eqref{eq-div-gammamero}.
Set $Y_1' = f^{-1}(Z \cup Y_1)$ and $Y_2' = f^{-1}(Y_2)$.
We have
\begin{equation}
\mathrm{Div}(f^*\gamma) = Y_1' - l Y_2' \;,
\end{equation}
By \cite[Th{\'e}or{\`e}me 7.31]{v},
we have
\begin{align}
\begin{split}
& H^\bullet_\mathrm{dR}(X') = H^\bullet_\mathrm{dR}(X) \oplus H_\mathrm{dR}^{\bullet-2}(Z) \;,\\
& H^\bullet_\mathrm{dR}(Y_1') = H^\bullet_\mathrm{dR}(Y_1) \oplus H^\bullet_\mathrm{dR}(Z) \oplus H_\mathrm{dR}^{\bullet-2}(Z) \;,\hspace{5mm}
H^\bullet_\mathrm{dR}(Y_2') = H^\bullet_\mathrm{dR}(Y_2) \;.
\end{split}
\end{align}
As a consequence,
we have
\begin{equation}
\label{eqchi-rem-tau-aux-bl}
\chi(X') = \chi(X) + \chi(Z) \;,\hspace{5mm}
\chi(Y_1') = \chi(Y_1) + 2 \chi(Z) \;,\hspace{5mm}
\chi(Y_2') = \chi(Y_2) \;.
\end{equation}
By \eqref{eq-def-w-gamma} and \eqref{eqchi-rem-tau-aux-bl},
we have
\begin{equation}
\label{eq1-rem-tau-aux-bl}
w(X',f^*\gamma) = w(X,\gamma) \;.
\end{equation}
By \eqref{eq-z} and \eqref{eq1-rem-tau-aux-bl},
for $z\in\C^*$,
we have
\begin{equation}
\tau(X',zf^*\gamma) - \tau(X,z\gamma) =
\tau(X',f^*\gamma) - \tau(X,\gamma) \;.
\end{equation}
\end{rem}

\begin{rem}
\label{rem-tau-bl}
Let $f:X' \rightarrow X$ be as in Remark \ref{rem-tau-aux-bl}.
We further assume that $\gamma\in H^0(X,K_X)$.
Let $Y\subseteq X$ (resp. $Y'\subseteq X'$)
be the zero locus of $\gamma$ (resp. $f^*\gamma$).
By \eqref{eq-def-tau}, \eqref{eq-def-tau-aux}, \eqref{eq-w-w1} and \eqref{eq1-rem-tau-aux-bl},
we have
\begin{equation}
\tau(X',Y') - \tau(X,Y) =
\tau(X',f^*\gamma) - \tau(X,\gamma) \;.
\end{equation}
\end{rem}

\section{Example}
\label{sect-ex}

\subsection{Proof of Theorem \ref{intro-thm-ex}}

Let $(X,Y)$ be a $(-2)$-Calabi-Yau pair
such that $X$ is a del Pezzo surface,
i.e., $\dim X=2$ and $K_X^{-1}$ is ample.

Let $\gamma \in H^0(X,K_X^{-2})$ such that $\mathrm{Div}(\gamma) = Y$.
Let $\nabla^{K_X^{-2}}$ be a connection on $K_X^{-2}$.
Let $N_Y$ be the normal bundle of $Y\subseteq X$.
Set
\begin{equation}
\label{eq-gammaprim-2}
\gamma' = \nabla^{K_X^{-2}} \gamma\big|_Y
\in H^0(Y,N_Y^{-1} \otimes K_X^{-2})
= H^0(Y,K_Y^{-1} \otimes K_X^{-1}) \;,
\end{equation}
which is independent of $\nabla^{K_X^{-2}}$.
Let
\begin{equation}
\big(\gamma'\big)^{-1} \in H^0(Y,K_Y \otimes K_X)
\end{equation}
be the inverse of $\gamma'$.

Let $j: Y \rightarrow X$ be the canonical embedding.
We have a short exact sequence of analytic coherent sheaves on $X$,
\begin{equation}
\label{eq-resol-Y}
0 \rightarrow
\mathscr{O}_X(K_X^2) \xrightarrow{\gamma}
\mathscr{O}_X \rightarrow
j_*\mathscr{O}_Y \rightarrow 0 \;.
\end{equation}
Since $K_X^{-1}$ is ample,
by the Kodaira vanishing theorem and Serre duality,
we have
\begin{equation}
\label{eq-vanishing}
H^0(X,K_X^2) = H^1(X,K_X^2) = 0 \;,\hspace{5mm}
H^{0,1}(X) = H^{0,2}(X) = 0 \;.
\end{equation}
Taking the long exact sequence induced by \eqref{eq-resol-Y}
and applying \eqref{eq-vanishing},
we get
\begin{equation}
\label{eq-iso-XY}
H^{0,0}(X) \xrightarrow{\sim} H^{0,0}(Y) \;,\hspace{5mm}
H^{0,1}(Y) \xrightarrow{\sim} H^2(X,K_X^2) \;.
\end{equation}
Let $1_X\in H^{0,0}(X)$ (resp. $1_Y\in H^{0,0}(Y)$) be the constant function $1$ on $X$ (resp. $Y$).
The first isomorphism in \eqref{eq-iso-XY}
could be explicitly calculated as follows,
\begin{align}
\label{eq2-iso-XY}
\begin{split}
H^{0,0}(X) & \rightarrow H^{0,0}(Y) \\
1_X & \mapsto 1_Y \;.
\end{split}
\end{align}
The second isomorphism in \eqref{eq-iso-XY}
is the Serre dual of the following map,
\begin{align}
\label{eq1-iso-XY}
\begin{split}
H^0(X,K_X^{-1}) & \rightarrow H^{1,0}(Y) \\
\phi & \mapsto \big(\gamma'\big)^{-1}\phi\big|_Y \;.
\end{split}
\end{align}
Set
\begin{equation}
\label{eq-def-lambda1-lambda2}
\xi_1 = \det H^{0,\bullet}(X) = H^{0,0}(X) \;,\hspace{5mm}
\xi_2 = \det H^\bullet(X,K_X^2) = \det H^2(X,K_X^2) \;.
\end{equation}
Let $\lambda_0(\cdot)$ be as in \eqref{eq-def-lambda-p} with $p=0$.
By \eqref{eq-iso-XY} and \eqref{eq-def-lambda1-lambda2},
we have
\begin{equation}
\label{eq-iso-lambda-XY}
\lambda_0(Y) = \xi_1 \otimes \xi_2^{-1} \;.
\end{equation}

Let $g$ be the genus of the curve $Y$.
By the second isomorphism in \eqref{eq-iso-XY},
we have
\begin{equation}
g = \dim H^2(X,K_X^2) \;.
\end{equation}
We fix a non-zero element
\begin{equation}
\label{eq-phiX}
\phi_X \in \xi_2 \;.
\end{equation}
Let $\phi_1,\cdots,\phi_g \in H^2(X,K_X^2)$ such that
$\phi_X = \phi_1\wedge\cdots\wedge\phi_g$.
Let $\varphi_1,\cdots,\varphi_g\in H^{0,1}(Y)$
be the pre-images of $\phi_1,\cdots,\phi_g$
via the second isomorphism in \eqref{eq-iso-XY}.
Let $(\cdot,\cdot\big)$
be the intersection form on $H^1_\mathrm{dR}(Y) = H^{1,0}(Y) \oplus H^{0,1}(Y)$.
Set
\begin{equation}
\label{eq-def-J}
J(\gamma,\phi_X) = \det
\Big((\varphi_j,\overline{\varphi}_k)_{1\leqslant j,k \leqslant g}\Big)
\in \C^* \;.
\end{equation}

Let $\omega$ be a K{\"a}hler form on $X$.

Let $g^{TX}$ (resp. $g^{TY}$) be the metric on $TX$ (resp. $TY$)
induced by $\omega$.
Let $\big|\cdot\big|$
be the norm on $K_X$ (resp. $K_Y^{-1}\otimes K_X^{-1}$)
induced by $\omega$.
We denote
\begin{align}
\label{eq-def-alphaXY-betaY}
\begin{split}
\alpha_X(\gamma,\omega) & = \frac{1}{12} \int_X
c_1^2\big(TX,g^{TX}\big)
\log \big|\gamma\big|^2 \;,\\
\alpha_Y(\gamma,\omega) & = \frac{1}{12} \int_Y
c_1\big(TX,g^{TX}\big)
\log \big|\gamma'\big|^2 \;,\\
\beta_Y(\gamma,\omega) & = \frac{1}{12} \int_Y
\log \big|\gamma'\big|^2
\frac{\overline{\partial}\partial}{2\pi i}
\log \big|\gamma'\big|^2 \;.
\end{split}
\end{align}
Let $a_X(\gamma,\omega)$, $a_Y(\gamma,\omega)$, $b_Y(\omega)$
and $\tau_\mathrm{BCOV}(X,\omega)$
be as in \textsection \ref{subsect-construction}.
Let \hbox{$\big\lVert \cdot \big\rVert_{\xi_1,\omega}$}
(resp. \hbox{$\big\lVert \cdot \big\rVert_{\xi_2,\omega}$})
be the Quillen metric on $\xi_1$ (resp. $\xi_2$) associated with $\omega$.
Recall that $w_\cdot(\cdot)$ was defined by \eqref{eq-def-w}.

\begin{prop}
\label{prop-resol}
The following identity holds,
\begin{align}
\label{eq-prop-resol}
\begin{split}
\tau(X,Y)
& = \tau_\mathrm{BCOV}(X,\omega)
- \log \big\lVert 1_X \big\rVert^2_{\xi_1,\omega}
+ \log \big\lVert \phi_X \big\rVert^2_{\xi_2,\omega}
- \log \big| J(\gamma,\phi_X) \big| \\
& \hspace{5mm} + 3\alpha_X(\gamma,\omega) + \frac{3}{2}a_X(\gamma,\omega)
+ \frac{9}{2}\alpha_Y(\gamma,\omega) - \frac{3}{2} b_Y(\omega) + \frac{3}{2} \beta_Y(\gamma,\omega) \\
& \hspace{5mm} + w_{-2}(X) \log \int_X \big|\gamma\overline{\gamma}\big|^{-1/2}
+ \mathrm{constant} \;.
\end{split}
\end{align}
Here 'constant' means a real number determined by the topology of $X$.
\end{prop}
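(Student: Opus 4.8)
The plan is to express $\tau(X,Y)$ via Definition \ref{def-tau} and then replace the term $-\frac{1}{m+1}\tau_\mathrm{BCOV}(Y,\omega|_Y)$ (with $m=-2$, so this is $+\tau_\mathrm{BCOV}(Y,\omega|_Y)$) by an expression living on $X$, using the resolution \eqref{eq-resol-Y}. The point is that $\tau_\mathrm{BCOV}(Y,\omega|_Y)$ is the logarithm of a Quillen norm of the canonical section $\sigma_Y \in \lambda_\mathrm{dR}(Y)$, and $\lambda_\mathrm{dR}(Y) = \lambda_0(Y)\otimes\overline{\lambda_0(Y)}$ because $Y$ is a curve and $\lambda_\mathrm{dR}(Y) = \big(\det H^1_\mathrm{dR}(Y)\big)^{-1}$ while $\lambda_0(Y)^{-1}\otimes\overline{\lambda_0(Y)}^{-1} = \det H^1_\mathrm{dR}(Y)$ up to contributions from $H^0$ and $H^2$ which I will track carefully. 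Via \eqref{eq-iso-lambda-XY}, $\lambda_0(Y) \cong \xi_1\otimes\xi_2^{-1}$, so the $L^2$-norm of $\sigma_Y$ decomposes into the $L^2$-norms of $1_X\in\xi_1$, of $\phi_X\in\xi_2$, and a period-type factor coming from the intersection form, which is exactly $|J(\gamma,\phi_X)|$ by \eqref{eq-def-J} and the description \eqref{eq1-iso-XY} of the Serre-dual isomorphism.

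First I would carefully identify which cohomology groups of $Y$ are nonzero. Since $Y$ is a smooth curve, $H^{0,0}(Y)$, $H^{1,0}(Y)$, $H^{0,1}(Y)$, $H^{1,1}(Y)$ are the relevant ones. By \eqref{eq-iso-XY} and \eqref{eq2-iso-XY}, $H^{0,0}(X)\cong H^{0,0}(Y)$ carries $1_X\mapsto 1_Y$, and $H^{0,1}(Y)\cong H^2(X,K_X^2)$. Dually $H^{1,0}(Y)\cong H^0(X,K_X^{-1})$ via \eqref{eq1-iso-XY}. So I would write $\tau_\mathrm{BCOV}(Y,\omega|_Y)$, which by \eqref{eq-def-bcov-torsion} is $\log\|\sigma_Y\|_{\lambda_\mathrm{dR}(Y),\omega|_Y}$, as the log of the $L^2$-norm of $\sigma_Y$ plus the Ray–Singer analytic torsion of $Y$, and then convert the $L^2$-norm into the $L^2$-norms of $1_X$, $\phi_X$ and $|J(\gamma,\phi_X)|$ — the latter being the Gram determinant of $(\varphi_j)$ in the $L^2$ metric on $H^{0,1}(Y)$, up to a correction relating the $L^2$ metric to the intersection-form metric. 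Here $\|\cdot\|_{\xi_i,\omega}$ is the Quillen metric, so I also pick up the analytic torsion of the bundles $\C$ and $K_X^2$ on $X$; but $\tau_\mathrm{BCOV}(X,\omega)$ already accounts for the torsion of $\Omega^0(X)$, and the torsion of $K_X^2$ has to be expressed — this is where Theorem \ref{thm-quillen-immersion} (the Bismut–Lebeau immersion formula specialized to the resolution \eqref{eq-resol-Y}) comes in, relating $\log\|\sigma\|^2$ for the section $\sigma$ of \eqref{eq-resol-Y} to local integrals involving $\log|\gamma|^2$ and $\log|\gamma'|^2$, producing precisely the curvature-type terms $\alpha_X$, $a_X$, $\alpha_Y$, $b_Y$, $\beta_Y$ with their coefficients $3, \tfrac32, \tfrac92, -\tfrac32, \tfrac32$.

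The bookkeeping then runs: start from \eqref{eq-def-tau} with $m=-2$, substitute \eqref{eq-def-tau-omega}, so $\tau(X,Y) = \tau_\mathrm{BCOV}(X,\omega) + \tfrac12 a_X(\gamma,\omega) + \tau_\mathrm{BCOV}(Y,\omega|_Y) + \tfrac12 a_Y(\gamma,\omega) - \tfrac12 b_Y(\omega) + w_{-2}(X)\log\int_X|\gamma\overline\gamma|^{-1/2}$ (being careful with the signs coming from $m=-2$: $-1/m = 1/2$, $-1/(m+1) = 1$, $1/(m(m+1)) = 1/2$, $1/m = -1/2$). Then replace $\tau_\mathrm{BCOV}(Y,\omega|_Y)$ by $-\log\|1_X\|^2_{\xi_1,\omega} + \log\|\phi_X\|^2_{\xi_2,\omega} - \log|J(\gamma,\phi_X)| + (\text{analytic torsion contribution of }K_X^2 \text{ on }X) + (\text{constant})$ using \eqref{eq-iso-lambda-XY} and the identification of the Serre pairing with the intersection form; the torsion of $K_X^2$ is then eliminated via Theorem \ref{thm-quillen-immersion} applied to \eqref{eq-resol-Y}, which introduces the $\alpha$, $\beta$, and extra $a$, $b$ terms. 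Collecting coefficients should give exactly \eqref{eq-prop-resol}. The main obstacle will be the metric/normalization bookkeeping: tracking the $L^2$ versus intersection-form normalization on $H^{0,1}(Y)$ (the factor that becomes $\log|J(\gamma,\phi_X)|$ rather than $\log\|\cdot\|^2_{L^2}$), checking that the contributions of $H^{1,1}(Y)$ and $H^{1,0}(Y)$ assemble correctly with those of $H^{0,0}(Y)$ and $H^{0,1}(Y)$ into the claimed combination, and confirming that the Bismut–Lebeau formula of Theorem \ref{thm-quillen-immersion}, together with \eqref{eq-def-aX}, \eqref{eq-def-aY}, \eqref{eq-def-bY} and \eqref{eq-def-alphaXY-betaY}, produces precisely the coefficients $3$, $\tfrac32$, $\tfrac92$, $-\tfrac32$, $\tfrac32$ and not something off by an integer multiple of a topological term — the residual discrepancy, being a purely topological real number, is absorbed into the $\mathrm{constant}$.
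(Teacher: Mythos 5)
Your overall strategy (unwind Definition \ref{def-tau} with $m=-2$, convert $\tau_\mathrm{BCOV}(Y,\omega|_Y)$ into data on $X$ via the resolution \eqref{eq-resol-Y}, and invoke Theorem \ref{thm-quillen-immersion}) is the same as the paper's, and your sign bookkeeping for $m=-2$ is correct. But there is a genuine gap in how you expect the immersion formula to deliver the result. Theorem \ref{thm-quillen-immersion} is only valid under assumption \eqref{eq-A}, i.e.\ $\big|v'\big|=1$, which here reads $\big|\gamma'\big|\equiv 1$ along $Y$; for an arbitrary K\"ahler form $\omega$ this fails, so the theorem cannot be applied directly. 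More importantly, even where it applies, its right-hand side contains no integrals of $\log\big|\gamma'\big|^2$, so it cannot ``produce'' the terms $\frac{9}{2}\alpha_Y(\gamma,\omega)$ and $\frac{3}{2}\beta_Y(\gamma,\omega)$ (nor an $a_Y$ contribution), and the residual discrepancy you propose to absorb into the constant is not topological --- it depends on $\omega$. The paper supplies the missing idea as its Step 1: it first proves, by an anomaly computation (Poincar\'e--Lelong together with \cite[Theorem 0.1]{bgs1} and Theorem \ref{thm-anomaly}), that the entire right-hand side of \eqref{eq-prop-resol} --- with the coefficients $3,\frac32,\frac92,-\frac32,\frac32$ chosen precisely so that all the variations cancel --- is independent of $\omega$. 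One may then specialize to a K\"ahler form with $\big|\gamma'\big|=1$, for which $a_Y=\alpha_Y=\beta_Y=0$, the hypothesis of Theorem \ref{thm-quillen-immersion} holds, and the immersion formula yields exactly $-3\alpha_X(\gamma,\omega)-a_X(\gamma,\omega)+b_Y(\omega)$ plus a topological constant. In short, the coefficients of $\alpha_Y$ and $\beta_Y$ are dictated by $\omega$-invariance, not by the immersion formula, and without the invariance step your computation only establishes the formula for one special metric while the left-hand side is metric-independent by Theorem \ref{thm-ind-omega}.

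A second, smaller gap: when you convert $\tau_\mathrm{BCOV}(Y,\omega|_Y)$ you are left with the Quillen-norm combination $\log\big\lVert 1_Y^*\otimes\alpha_Y^{-1}\otimes 1_Y\big\rVert^2_{(\lambda_0\otimes\lambda_1^{-1})(Y),\omega}$ comparing holomorphic and de Rham torsion of the curve $Y$; the fact that this is a constant depending only on the genus is a nontrivial input from \cite[page 1304]{b04}, which your sketch does not identify. Without it, the ``analytic torsion contribution'' of $Y$ that you mention would remain metric-dependent and could not be absorbed into the constant.
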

\begin{proof}
The proof consists of several steps.

\noindent\textbf{Step 1.}
We show that
the right hand side of \eqref{eq-prop-resol}
is independent of $\omega$.

Let $\big(\omega_s\big)_{s\in{\C}P^1}$ be a smooth family of K{\"a}hler forms on $X$.
Let $\tau_\mathrm{BCOV}(X,\omega)$ be
the function $s\mapsto \tau_\mathrm{BCOV}(X,\omega_s)$ on ${\C}P^1$.
In the same way,
all the terms on right hand side of \eqref{eq-prop-resol}
are viewed as functions on ${\C}P^1$.
We need to show that
the right hand side of \eqref{eq-prop-resol} is a constant function on ${\C}P^1$.

We view $TX$ as a holomorphic vector bundle over $X\times{\C}P^1$.
Let $g^{TX}$ be the metric on $TX$ induced by $\big(\omega_s\big)_{s\in{\C}P^1}$,
i.e., $g^{TX}\big|_{X\times\{s\}}$ is induced by $\omega_s$.
We view $K_X^2$ as a holomorphic line bundle over $X\times{\C}P^1$.
Let $g^{K_X^2}$ be the metric on $K_X^2$ induced by $g^{TX}$.
By \cite[Theorem 0.1]{bgs1} and the Poincar{\'e}-Lelong formula,
we have
\begin{align}
\label{eq11-pf-prop-resol}
\begin{split}
& - \frac{\overline{\partial}\partial}{2\pi i}
\log \big\lVert 1_X \big\rVert^2_{\xi_1,\omega}
+ \frac{\overline{\partial}\partial}{2\pi i}
\log \big\lVert \phi_X \big\rVert^2_{\xi_2,\omega} \\
& = \bigg\{ \int_X \mathrm{Td}\big(TX,g^{TX}\big)
\Big(1 - \mathrm{ch}\big(K_X^2,g^{K_X^2}\big)\Big) \bigg\}^{(1,1)} \\
& = \frac{1}{2}\int_X c_1^3\big(TX,g^{TX}\big)
+ \frac{1}{6}\int_X c_1\big(TX,g^{TX}\big)c_2\big(TX,g^{TX}\big) \;.
\end{split}
\end{align}

By the Poincar{\'e}-Lelong formula
and the first identity in \eqref{eq-def-alphaXY-betaY},
we have
\begin{equation}
\label{eq12-pf-prop-resol}
\frac{\overline{\partial}\partial}{2\pi i} \alpha_X(\gamma,\omega)
= \frac{1}{12} \int_Y c_1^2\big(TX,g^{TX}\big)
- \frac{1}{6} \int_X c_1^3\big(TX,g^{TX}\big) \;.
\end{equation}

We view $TY$ (resp. $N_Y$) as a holomorphic line bundle over $Y\times{\C}P^1$.
Let $g^{TY}$ (resp. $g^{N_Y}$) be
the metric on $TY$ (resp. $N_Y$) induced by $g^{TX}$.
By the Poincar{\'e}-Lelong formula,
we have
\begin{align}
\label{eq13-pf-prop-resol}
\begin{split}
\frac{\overline{\partial}\partial}{2\pi i} \log \big|\gamma'\big|^2
& = c_1(N_Y,g^{N_Y}) - 2c_1(TX,g^{TX})\big|_{Y\times{\C}P^1} \\
& = - c_1(TY,g^{TY}) - c_1(TX,g^{TX})\big|_{Y\times{\C}P^1}
\in \Omega^{1,1}(Y\times{\C}P^1) \;.
\end{split}
\end{align}

By the last two identities in \eqref{eq-def-alphaXY-betaY}
and \eqref{eq13-pf-prop-resol},
we have
\begin{align}
\label{eq14-pf-prop-resol}
\begin{split}
\frac{\overline{\partial}\partial}{2\pi i} \alpha_Y(\gamma,\omega) &
= - \frac{1}{12} \int_Y c_1(TX,g^{TX}) \Big(c_1(TY,g^{TY}) + c_1(TX,g^{TX})\Big) \;, \\
\frac{\overline{\partial}\partial}{2\pi i} \beta_Y(\gamma,\omega) &
= \frac{1}{12} \int_Y \Big(c_1(TY,g^{TY}) + c_1(TX,g^{TX})\Big)^2 \;.
\end{split}
\end{align}

By \eqref{eq-anomaly-XY}-\eqref{eq-bc2}
and \eqref{eq11-pf-prop-resol}-\eqref{eq14-pf-prop-resol},
the right hand side of \eqref{eq-prop-resol} is a harmonic function on ${\C}P^1$.
Hence it is constant.

Let $\lambda_p(\cdot)$ be as in \eqref{eq-def-lambda-p}.
Let $\big\lVert\cdot\big\rVert_{\lambda_p(Y),\omega}$
be the Quillen metric on $\lambda_p(Y)$ associated with $\omega\big|_Y$.

By \eqref{eq-def-lambda1-lambda2}, \eqref{eq-iso-lambda-XY} and \eqref{eq-phiX},
we have $\phi_X^{-1} \otimes 1_X \in \lambda_0(Y)$.

\noindent\textbf{Step 2.}
We show that
\begin{align}
\label{eq2-pf-prop-resol}
\begin{split}
\tau_\mathrm{BCOV}(Y,\omega|_Y)
& = - \log \Big\lVert \phi_X^{-1} \otimes 1_X \Big\rVert^2_{\lambda_0(Y),\omega}
- \log \big| J(\gamma,\phi_X) \big|
+ \mathrm{constant} \;.
\end{split}
\end{align}

Let $1_Y\in H^{0,0}(Y)$ be the constant function $1$ on $Y$.
Let $1_Y^*\in H^{1,1}(Y)$ be its dual.
Let $\alpha_1,\cdots,\alpha_{2g}\in H^1_\mathrm{dR}(Y)$
be a basis of the lattice $H^1_\mathrm{Sing}(Y,\Z) \hookrightarrow H^1_\mathrm{dR}(Y)$.
Set
\begin{equation}
\alpha_Y = \alpha_1\wedge\cdots\wedge\alpha_{2g}\in \det H^1_\mathrm{dR}(Y) \;.
\end{equation}

In the sequel,
for $\xi$ a product of several elements in
\begin{equation}
\Big\{ \lambda_p(Y), \overline{\lambda_p(Y)}, \lambda_p(Y)^{-1}, \overline{\lambda_p(Y)}^{-1}
\;:\; p=0,1 \Big\} \;,
\end{equation}
we denote by $\big\lVert\cdot\big\rVert_{\xi,\omega}$
the metric on $\xi$ induced by $\big\lVert\cdot\big\rVert_{\lambda_p(Y),\omega}$.
Moreover,
for ease of notation,
we denote $(\lambda_p\otimes\lambda_q)(Y) = \lambda_p(Y)\otimes\lambda_q(Y)$.
By \eqref{eq-def-bcov-torsion},
we have
\begin{align}
\label{eq21-pf-prop-resol}
\begin{split}
& \tau_\mathrm{BCOV}(Y,\omega|_Y) \\
& = \log \Big\lVert 1_Y^{*,2} \otimes \alpha_Y^{-1} \Big\rVert_{\big(\lambda_1^{-1}\otimes\overline{\lambda}_1^{-1}\big)(Y),\omega} \\
& = \log \Big\lVert 1_Y^{*,2} \otimes \alpha_Y^{-2} \otimes 1_Y^2
\Big\rVert_{\big(\lambda_0\otimes\overline{\lambda}_0
\otimes\lambda_1^{-1}\otimes\overline{\lambda}_1^{-1}\big)(Y),\omega}
- \log \Big\lVert \alpha_Y^{-1} \otimes 1_Y^2
\Big\rVert_{\big(\lambda_0\otimes\overline{\lambda}_0\big)(Y),\omega} \\
& = \log \Big\lVert 1_Y^* \otimes \alpha_Y^{-1} \otimes 1_Y
\Big\rVert_{\big(\lambda_0\otimes\lambda_1^{-1}\big)(Y),\omega}^2
- \log \Big\lVert \alpha_Y^{-1} \otimes 1_Y^2
\Big\rVert_{\big(\lambda_0\otimes\overline{\lambda}_0\big)(Y),\omega} \;.
\end{split}
\end{align}

We identify $H^{0,0}(Y)$ with $H^{0,0}(X)$ via the first isomorphism in \eqref{eq-iso-XY}.
We identify $H^{0,1}(Y)$ with $H^2(X,K_X^2)$ via the second isomorphism in \eqref{eq-iso-XY}.
Recall that $J(\gamma,\phi_X)$ was defined by \eqref{eq-def-J}.
We have
\begin{equation}
\label{eqJ-pf-prop-resol}
1_Y = 1_X \;,\hspace{5mm}
\alpha_Y  = \pm \big(J(\gamma,\phi_X)\big)^{-1} \phi_X \otimes \overline{\phi_X} \;.
\end{equation}
By \eqref{eqJ-pf-prop-resol},
we have
\begin{align}
\label{eq22-pf-prop-resol}
\begin{split}
& \log \Big\lVert \alpha_Y^{-1} \otimes 1_Y^2
\Big\rVert_{\big(\lambda_0\otimes\overline{\lambda}_0\big)(Y),\omega} \\
& = \log \Big\lVert \big(\phi_X \otimes \overline{\phi_X}\big)^{-1} \otimes 1_Y^2
\Big\rVert_{\big(\lambda_0\otimes\overline{\lambda}_0\big)(Y),\omega}
+ \log \big| J(\gamma,\phi_X) \big| \\
& = \log \Big\lVert \phi_X^{-1} \otimes 1_Y
\Big\rVert_{\lambda_0(Y),\omega}^2
+ \log \big| J(\gamma,\phi_X) \big| \;.
\end{split}
\end{align}

By \cite[page 1304]{b04},
the real number
\begin{equation}
\log \Big\lVert 1_Y^* \otimes \alpha_Y^{-1} \otimes 1_Y
\Big\rVert_{\big(\lambda_0\otimes\lambda_1^{-1}\big)(Y),\omega}^2
\end{equation}
is determined by the genus of $Y$.
Then \eqref{eq2-pf-prop-resol} follows from \eqref{eq21-pf-prop-resol} and \eqref{eq22-pf-prop-resol}.

\noindent\textbf{Step 3.}
We conclude.

By Step 1, it is sufficient to prove \eqref{eq-prop-resol}
with a K{\"a}hler form $\omega$ satisfying
$\big|\gamma'\big| = 1$.
This assumption implies
\begin{equation}
\label{eq30-pf-prop-resol}
a_Y(\gamma,\omega) = 0 \;,\hspace{5mm}
\alpha_Y(\gamma,\omega) = 0 \;,\hspace{5mm}
\beta_Y(\gamma,\omega) =0 \;.
\end{equation}

Applying Theorem \ref{thm-quillen-immersion} with $L=K_X^2$,
we obtain
\begin{align}
\label{eq3-pf-prop-resol}
\begin{split}
\log \Big\lVert \phi_X^{-1} \otimes 1_X \Big\rVert^2_{\lambda_0(Y),\omega}
& = \log \big\lVert 1_X \big\rVert^2_{\xi_1,\omega}
- \log \big\lVert \phi_X \big\rVert^2_{\xi_2,\omega} \\
& \hspace{5mm} - 3\alpha_X(\gamma,\omega) - a_X(\gamma,\omega) + b_Y(\omega)
+ \mathrm{constant} \;.
\end{split}
\end{align}
By \eqref{eq-def-tau-omega}, \eqref{eq-def-tau}
and the first identity in \eqref{eq30-pf-prop-resol},
we have
\begin{align}
\label{eq31-pf-prop-resol}
\begin{split}
\tau(X,\gamma,\omega) =
& \; \tau_\mathrm{BCOV}(X,\omega) + \tau_\mathrm{BCOV}(Y,\omega|_Y) \\
& \;  + \frac{1}{2} a_X(\gamma,\omega)
- \frac{1}{2}b_Y(\omega)
+ w_{-2}(X) \log \int_X \big|\gamma\overline{\gamma}\big|^{-1/2} \;.
\end{split}
\end{align}
From \eqref{eq2-pf-prop-resol} and \eqref{eq30-pf-prop-resol}-\eqref{eq31-pf-prop-resol},
we obtain \eqref{eq-prop-resol} with $\omega$ satisfying $\big|\gamma'\big| = 1$ .
This completes the proof.
\end{proof}

We denote
\begin{equation}
D = \big\{t\in\C\;:\;|t|<1\big\} \;,\hspace{5mm}
D^* = \big\{t\in\C\;:\;0<|t|<1\big\} \;.
\end{equation}
Let
\begin{equation}
\Big(\gamma_t\in H^0(X,K_X^{-2})\backslash\{0\}\Big)_{t\in D}
\end{equation}
be a holomorphic family.
Let $Y_t\subseteq X$ be the zero locus of $\gamma_t$.
Let $l\in \N$.
We assume that
\begin{itemize}
\item[-] the family $\big(Y_t\big)_{t\in D^*}$ is smooth;
\item[-] the family $\big(Y_t\big)_{t\in D}$ has exactly $l$ ordinary double points
(cf. \cite[Proposition 3.2 (ii)]{bb}) at $t=0$.
\end{itemize}

\begin{prop}
\label{prop-asymp-tau}
As $t\rightarrow 0$,
we have
\begin{equation}
\label{eq-prop-asymp-tau}
\tau(X,Y_t) = \frac{l}{8} \log |t|^2
+ \mathscr{O}\big(\log(-\log|t|)\big) \;.
\end{equation}
\end{prop}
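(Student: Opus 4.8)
The plan is to evaluate $\tau(X,Y_t)$ by Proposition \ref{prop-resol} with a \emph{fixed} K\"ahler form $\omega$ on $X$ (legitimate since $X$ does not vary), and then to show that all but one contribution stays $\mathscr{O}\big(\log(-\log|t|)\big)$. In \eqref{eq-prop-resol} the terms $\tau_\mathrm{BCOV}(X,\omega)$, $\big\lVert 1_X\big\rVert_{\xi_1,\omega}$, $\big\lVert\phi_X\big\rVert_{\xi_2,\omega}$ are constant in $t$; $a_X(\gamma_t,\omega)$ and $\alpha_X(\gamma_t,\omega)$ are integrals over $X$ of a fixed smooth $(2,2)$-form against $\log|\gamma_t|^2$, and since $\gamma_t\to\gamma_0$ in $\smooth(X,K_X^{-2})$ with $\gamma_0\not\equiv0$ and $\log|\gamma_t|^2$ is uniformly integrable (in the local model of a double point $\gamma_t$ is $z_1z_2-t$ up to a unit) they converge, hence contribute $\mathscr{O}(1)$; the same shows $\int_X\big|\gamma_t\overline{\gamma_t}\big|^{-1/2}$ converges (its limit has only a $|z_1z_2|^{-1}$-type singularity at the nodes, integrable on a surface), and $\alpha_Y(\gamma_t,\omega)=\mathscr{O}(1)$ since $c_1(TX,g^{TX})|_{Y_t}$ is bounded while the locus of $Y_t$ where $|\gamma_t'|^2$ is small has area $\mathscr{O}(|t|)$ per node. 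Finally I would use the identities in the proof of Proposition \ref{prop-resol} — namely \eqref{eq2-pf-prop-resol} together with Theorem \ref{thm-quillen-immersion} applied with $L=K_X^2$ — to rewrite $-\tfrac32 b_{Y_t}(\omega)$ as $\tfrac32\tau_\mathrm{BCOV}(Y_t,\omega|_{Y_t})+\tfrac32\log\big|J(\gamma_t,\phi_X)\big|+\mathscr{O}(1)$, which leaves
\[
\tau(X,Y_t)=\tfrac32\,\tau_\mathrm{BCOV}(Y_t,\omega|_{Y_t})+\tfrac32\,\beta_Y(\gamma_t,\omega)+\tfrac12\log\big|J(\gamma_t,\phi_X)\big|+\mathscr{O}(1).
\]
The period determinant is controlled by Schmid's nilpotent orbit theorem: the $\varphi_j$ form a holomorphic frame of the canonical extension over $D$ of the Hodge bundle $H^{0,1}(Y)$ (the arithmetic genus of $Y_t$ is constant), and the Hodge--Riemann pairings $\big(\varphi_j,\overline{\varphi}_k\big)$ are bounded above and below by powers of $\log(1/|t|)$, so $\log\big|J(\gamma_t,\phi_X)\big|=\mathscr{O}\big(\log(-\log|t|)\big)$.

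It remains to analyse the two ``curve'' quantities. Near each of the $l$ double points, choose coordinates so that $Y_t=\{z_1z_2=t\}$ (this is exactly the local model of an ordinary double point of the family $\pi_\mathscr{Y}$); then $\omega|_{Y_t}$ develops a short thin collar on which the Gauss--curvature form concentrates, with $\int_{\mathrm{collar}}c_1(TY_t,g^{TY_t})\to-2$, and on which $\log|\gamma_t'|^2=\log|t|+\mathscr{O}(1)$. By \eqref{eq13-pf-prop-resol} one has $\tfrac{\overline{\partial}\partial}{2\pi i}\log|\gamma_t'|^2=-c_1(TY_t,g^{TY_t})-c_1(TX,g^{TX})|_{Y_t}$; the second summand contributes $\mathscr{O}(1)$ to $\beta_Y$ as above, while the first gives $\int_{Y_t}\log|\gamma_t'|^2\,c_1(TY_t,g^{TY_t})=-2l\log|t|+\mathscr{O}(1)$, whence $\tfrac32\beta_Y(\gamma_t,\omega)=\tfrac{l}{8}\log|t|^2+\mathscr{O}(1)$. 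For the other quantity, unravelling \eqref{eq2-pf-prop-resol} and Theorem \ref{thm-quillen-immersion} once more and using that $\int_{Y_t}\omega$ and the Hodge--Riemann Gram determinant of the $\varphi_j$ agree with the identically constant quantities up to $\log|J|$ (hence up to $\mathscr{O}(\log\log)$), one finds $\tau_\mathrm{BCOV}(Y_t,\omega|_{Y_t})=\mathrm{const}-\log\big(\text{$\zeta$-regularized }\det\text{ of }\Delta^{\mathscr{O}}_{Y_t,\omega|_{Y_t}}\big)+\mathscr{O}(\log\log)$; the $\zeta$-regularized determinant of the function Laplacian of a Riemann surface degenerating to $l$ ordinary double points in the smooth model metric is $\asymp(\log(1/|t|))^{-2l}$, the only divergence coming from $l$ eigenvalues $\asymp(\log(1/|t|))^{-2}$ while the rest of the spectrum is governed by heat-kernel coefficients constant in the family, so $\tau_\mathrm{BCOV}(Y_t,\omega|_{Y_t})=\mathscr{O}\big(\log(-\log|t|)\big)$. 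Combining these estimates yields \eqref{eq-prop-asymp-tau}.

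The main obstacle is the precise control, to order $\mathscr{O}\big(\log(-\log|t|)\big)$, of the two curve quantities: extracting the collar contribution to $\beta_Y$ and checking that all neighbouring terms are genuinely $\mathscr{O}(1)$, and establishing the spectral asymptotics of the function Laplacian for this degeneration with the smooth model metric (together with the $L^2$-compatibility of Serre duality used in the rewriting). A more robust, if less explicit, alternative is to apply the curvature formula of Theorem \ref{intro-thm-curvature} over the punctured disc: since $X$ is fixed, $\omega_{H^\bullet(X)}=0$, and by the nilpotent orbit theorem both $\omega_{H^\bullet(Y)}$ and $\omega_{\pi_\mathscr{X},\pi_\mathscr{Y}}$ have at worst an $L^1$ (indeed $\tfrac{i}{2\pi}\tfrac{dt\wedge d\overline{t}}{|t|^2(\log|t|)^2}$-type) singularity at the origin, hence so does $\tfrac{\overline{\partial}\partial}{2\pi i}\tau(X,Y_t)$; integrating gives $\tau(X,Y_t)=c\log|t|^2+\mathscr{O}\big(\log(-\log|t|)\big)$ for some $c\in\Q$, and one then pins down $c=l/8$ by deforming to the local model at a single node.
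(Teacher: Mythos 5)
Your overall frame --- evaluate \eqref{eq-prop-resol} with a fixed $\omega$, show every term except one is $\mathscr{O}\big(\log(-\log|t|)\big)$, and extract $\frac{l}{8}\log|t|^2$ from $\frac{3}{2}\beta_{Y_t}$ --- is the paper's strategy, and your treatment of the $X$-integrals, of $\log\int_X\big|\gamma_t\overline{\gamma_t}\big|^{-1/2}$, of $\log|J(\gamma_t,\phi_X)|$ (via the canonical extension of the Hodge bundle; the paper instead passes to the dual basis and quotes \cite[Proposition 7.1]{bb}) and of $\beta_{Y_t}$ (the paper quotes \cite[Theorem 4.1]{y98} for $\beta_{Y_t}=\frac{l}{12}\log|t|^2+\mathscr{O}(1)$) is in the right spirit. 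The paper's one remaining step is to show \emph{directly} that $b_{Y_t}(\omega)=\mathscr{O}(1)$, following Step 3 of the proof of \cite[Theorem 5.1]{y07}; this is a purely local Bott--Chern estimate and requires no spectral input.

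Your detour --- trading $-\frac{3}{2}b_{Y_t}$ for $\frac{3}{2}\tau_\mathrm{BCOV}(Y_t,\omega|_{Y_t})$ and then claiming $\tau_\mathrm{BCOV}(Y_t,\omega|_{Y_t})=\mathscr{O}\big(\log(-\log|t|)\big)$ --- contains two errors that happen to cancel. First, the identity \eqref{eq3-pf-prop-resol} (equivalently Theorem \ref{thm-quillen-immersion}) is only valid under assumption \eqref{eq-A}, i.e. $|\gamma_t'|\equiv 1$; for a fixed $\omega$ one has $\inf_{Y_t}|\gamma_t'|^2\asymp|t|$ near each node, and the correction terms needed to remove assumption (A) are integrals over $Y_t$ of Todd forms against $\log|\gamma_t'|^2$, which are \emph{not} $\mathscr{O}(1)$: they carry a $\frac{l}{12}\log|t|^2$ divergence. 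Second, and consistently, $\tau_\mathrm{BCOV}(Y_t,\omega|_{Y_t})$ is not $\mathscr{O}\big(\log(-\log|t|)\big)$: by \eqref{eq2-pf-prop-resol} it equals $-\log\big\lVert\phi_X^{-1}\otimes 1_X\big\rVert^2_{\lambda_0(Y_t),\omega}+\mathscr{O}\big(\log(-\log|t|)\big)$, and $\phi_X^{-1}\otimes 1_X$ is a nowhere vanishing holomorphic section over the whole disc $D$ of $\det R\pi_*\mathscr{O}_{\mathscr{Y}}\cong\xi_1\otimes\xi_2^{-1}$, so by the main theorem of \cite{bb} its Quillen norm degenerates like $\frac{l}{12}\log|t|^2$ plus lower order; hence $\tau_\mathrm{BCOV}(Y_t,\omega|_{Y_t})=-\frac{l}{12}\log|t|^2+\mathscr{O}\big(\log(-\log|t|)\big)$. (Already in genus one the Kronecker limit formula gives ${\det}'\Delta\sim |q|^{1/6}\,|\eta(\tau)|^{4}\,\mathrm{Im}\,\tau$-type behaviour, so your claimed asymptotics ${\det}'\Delta\asymp(\log(1/|t|))^{-2l}$, with no power of $|t|$, cannot be correct.) With the true value of $\tau_\mathrm{BCOV}(Y_t,\omega|_{Y_t})$ your displayed intermediate formula would produce $\frac{3}{2}\big(-\frac{l}{12}+\frac{l}{12}\big)\log|t|^2=0$, so that formula itself is off by exactly the divergent assumption-(A) correction. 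The repair is simply not to reintroduce the curve torsion: keep $b_{Y_t}(\omega)$ and bound it as in \cite{y07}. Your closing ``alternative'' via Theorem \ref{intro-thm-curvature} is a reasonable outline but not a proof as stated: after integrating a Poincar\'e-type $L^1$ curvature bound one still has to control the harmonic part and compute the coefficient $c$ from the local model, and that computation is precisely the $\beta_{Y_t}$ analysis you would be trying to avoid.
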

\begin{proof}
Let $x_1,\cdots,x_l\in X$ be the singular points of $Y_0\subseteq X$.
Let $\omega\in\Omega^{1,1}(X)$ be a K{\"a}hler form
such that the curvature of the metric on $TX$ induced by $\omega$
vanishes near $x_1,\cdots,x_l$.

By Proposition \ref{prop-resol},
we have
\begin{align}
\label{eqtau-pf-prop-asymp-tau}
\begin{split}
\tau(X,Y_t)
& = \tau_\mathrm{BCOV}(X,\omega)
- \log \big\lVert 1_X \big\rVert^2_{\xi_1,\omega}
+ \log \big\lVert \phi_X \big\rVert^2_{\xi_2,\omega}
- \log \big| J(\gamma_t,\phi_X) \big| \\
& \hspace{5mm} + 3\alpha_X(\gamma_t,\omega) + \frac{3}{2}a_X(\gamma_t,\omega)
+ \frac{9}{2}\alpha_{Y_t}(\gamma_t,\omega) - \frac{3}{2}b_{Y_t}(\omega)
+ \frac{3}{2}\beta_{Y_t}(\gamma_t,\omega) \\
& \hspace{5mm} + w_{-2}(X) \log \int_X \big|\gamma_t\overline{\gamma_t}\big|^{-1/2}
+ \mathrm{constant} \;.
\end{split}
\end{align}

Since the K{\"a}hler form $\omega$ is  independent of $t$,
as $t\rightarrow 0$,
we obviously have
\begin{equation}
\label{eqx-pf-prop-asymp-tau}
\tau_\mathrm{BCOV}(X,\omega) = \mathscr{O}\big(1\big) \;,\hspace{5mm}
\log \big\lVert 1_X \big\rVert^2_{\xi_1,\omega} = \mathscr{O}\big(1\big)\;,\hspace{5mm}
\log \big\lVert \phi_X \big\rVert^2_{\xi_2,\omega} = \mathscr{O}\big(1\big) \;.
\end{equation}

Since $c_1(TX,g^{TX})$ and $c_2(TX,g^{TX})$ vanish near $x_1,\cdots,x_l$,
as $t\rightarrow 0$,
we have
\begin{equation}
\alpha_X(\gamma_t,\omega) = \mathscr{O}\big(1\big) \;,\hspace{5mm}
a_X(\gamma_t,\omega) = \mathscr{O}\big(1\big) \;,\hspace{5mm}
\alpha_{Y_t}(\gamma_t,\omega) = \mathscr{O}\big(1\big) \;.
\end{equation}

Proceeding in the same way as
in Step 3 of the proof of \cite[Theorem 5.1]{y07},
as $t\rightarrow 0$,
we have
\begin{equation}
b_{Y_t}(\omega) = \mathscr{O}\big(1\big) \;.
\end{equation}

Proceeding in the same way as in the proof of \cite[Theorem 4.1]{y98},
as $t\rightarrow 0$,
we have
\begin{equation}
\beta_{Y_t}(\gamma_t,\omega)
= \frac{l}{12} \log |t|^2 + \mathscr{O}\big(1\big) \;.
\end{equation}

By a direct calculation,
as $t\rightarrow 0$,
we have
\begin{equation}
\label{eqint-pf-prop-asymp-tau}
\log \int_X \big|\gamma_t\overline{\gamma_t}\big|^{-1/2} = \mathscr{O}\big(\log(-\log|t|)\big) \;.
\end{equation}

Let
\begin{equation}
\label{eqiso-pf-prop-asymp-tau}
H^{0,1}(Y_t) \xrightarrow{\sim} H^2(X,K_X^2)
\end{equation}
be the second isomorphism in \eqref{eq-iso-XY} with $Y$ replaced by $Y_t$.
Recall that
\begin{equation}
\label{eqphi-pf-prop-asymp-tau}
\big(\phi_k\in H^2(X,K_X^2)\big)_{1\leqslant k\leqslant g}
\end{equation}
is a basis satisfying $\phi_X = \phi_1\wedge\cdots\wedge\phi_g$.
Let
\begin{equation}
\label{eqvarphi-pf-prop-asymp-tau}
\big(\varphi_k(t)\in H^{0,1}(Y_t)\big)_{1\leqslant k\leqslant g}
\end{equation}
be the pre-image of \eqref{eqphi-pf-prop-asymp-tau} via the isomorphism \eqref{eqiso-pf-prop-asymp-tau}.
Let
\begin{equation}
\label{eqisodual-pf-prop-asymp-tau}
H^0(X,K_X^{-1}) \xrightarrow{\sim} H^{1,0}(Y_t)
\end{equation}
be the isomorphism in \eqref{eq1-iso-XY} with $Y$ replaced by $Y_t$,
which is the Serre dual of \eqref{eqiso-pf-prop-asymp-tau}.
Let $\big(\cdot,\cdot\big)_X$ be the Serre pairing between $H^0(X,K_X^{-1})$ and $H^2(X,K_X^2)$.
Let
\begin{equation}
\label{eqeps-pf-prop-asymp-tau}
\big(\epsilon_k\in H^0(X,K_X^{-1})\big)_{1\leqslant k\leqslant g}
\end{equation}
be the dual basis of \eqref{eqphi-pf-prop-asymp-tau} with respect to $\big(\cdot,\cdot\big)_X$.
Let $\big(\cdot,\cdot\big)_t$ is the Serre pairing between $H^{1,0}(Y_t)$ and $H^{0,1}(Y_t)$.
Let
\begin{equation}
\label{eqvareps-pf-prop-asymp-tau}
\big(\varepsilon_k(t)\in H^{1,0}(Y_t)\big)_{1\leqslant k\leqslant g}
\end{equation}
be the dual basis of \eqref{eqvarphi-pf-prop-asymp-tau} with respect to $\big(\cdot,\cdot\big)_t$.
Then \eqref{eqvareps-pf-prop-asymp-tau} is the image of \eqref{eqeps-pf-prop-asymp-tau}
via the isomorphism \eqref{eqisodual-pf-prop-asymp-tau}.
Moreover,
by \eqref{eq1-iso-XY},
we have
\begin{equation}
\label{eqeps2vareps-pf-prop-asymp-tau}
\varepsilon_k(t) = \big(\gamma_t'\big)^{-1}\epsilon_k\big|_{Y_t} \;.
\end{equation}
By \eqref{eq-def-J} and the fact that
\eqref{eqvarphi-pf-prop-asymp-tau} and \eqref{eqvareps-pf-prop-asymp-tau} are dual to each other,
we have
\begin{align}
\label{eqJa-pf-prop-asymp-tau}
\begin{split}
\log \big| J(\gamma_t,\phi_X) \big|
& = \log \bigg| \det \Big(\big(\overline{\varphi_j(t)},\varphi_k(t)\big)_{t,1\leqslant j,k \leqslant g}\Big) \bigg| \\
& = - \log \bigg| \det \Big(\big(\varepsilon_j(t),\overline{\varepsilon_k(t)}\big)_{t,1\leqslant j,k \leqslant g}\Big) \bigg| \;.
\end{split}
\end{align}
We remark that
the $L^2$-metric on $H^{1,0}(Y_t)$ with respect to any Hermitian metric on $TY$
is given by $\big(\cdot,\overline{\cdot}\big)_t$.
Applying \eqref{eqeps2vareps-pf-prop-asymp-tau}
and proceeding in the same way as in the proof of \cite[Proposition 7.1]{bb},
as $t\rightarrow 0$,
we have
\begin{equation}
\label{eqJb-pf-prop-asymp-tau}
\log \bigg| \det \Big(\big(\varepsilon_j(t),\overline{\varepsilon_k(t)}\big)_{t,1\leqslant j,k \leqslant g}\Big) \bigg|
= \mathscr{O}\big(\log(-\log|t|)\big) \;.
\end{equation}
By \eqref{eqJa-pf-prop-asymp-tau} and \eqref{eqJb-pf-prop-asymp-tau},
we have
\begin{equation}
\label{eqJ-pf-prop-asymp-tau}
\log \big| J(\gamma_t,\phi_X) \big| = \mathscr{O}\big(\log(-\log|t|)\big) \;.
\end{equation}

From \eqref{eqtau-pf-prop-asymp-tau}-\eqref{eqint-pf-prop-asymp-tau}
and \eqref{eqJ-pf-prop-asymp-tau},
we obtain \eqref{eq-prop-asymp-tau}.
This completes the proof.
\end{proof}

\begin{proof}[Proof of Theorem \ref{intro-thm-ex}]
For readers' convenience,
we restate the setting in Theorem \ref{intro-thm-ex}.
Let $X$ be a del Pezzo surface such that $K_X^{-2}$ is very ample.
Let $\gamma\in H^0(X,K_X^{-2})$.
Let $Y$ be the zero locus of $\gamma$.
We assume that $Y$ is smooth and reduced.
Let $f: X'\rightarrow X$ be the ramified double covering
whose branch locus is $Y$.
Let $\iota$ be the involution on $X'$ commuting with $f$.
Let $\tau(X',\iota)$ be the logarithm of
Yoshikawa's equivariant BCOV invariant \cite[Definition 5.1]{y04} of $(X',\iota)$.
We need to show that
$\tau(X,Y)+\tau(X',\iota)$ is independent of $Y$.

Let
\begin{equation}
\Big(Y_s\in\big|K_X^{-2}\big|\Big)_{s\in S}
\end{equation}
be a generic curve in $\big|K_X^{-2}\big|$.
Since $K_X^{-2}$ is very ample,
there exists a finite subset $\Delta\subseteq S$ such that
\begin{itemize}
\item[-] the family $\big(Y_s\big)_{s\in S\backslash\Delta}$ is smooth;
\item[-] the family $\big(Y_s\big)_{s\in S}$ has
exactly one ordinary double points at each $s\in\Delta$.
\end{itemize}
For $s\in S\backslash\Delta$,
let $(X_s',\iota_s)$ be as in the last paragraph
with $Y$ replaced by $Y_s$.
Let $\tau(X,Y)$ (resp. $\tau(X',\iota)$)
be the function  $s\mapsto \tau(X,Y_s)$ (resp. $s\mapsto \tau(X_s',\iota_s)$)
on $S\backslash\Delta$.
It is sufficient to show that
$\tau(X,Y)+\tau(X',\iota)$ is constant on $S\backslash\Delta$.

\noindent\textbf{Step 1.}
We calculate $\overline{\partial}\partial\tau(X,Y)$.

Recall that $g$ is the genus of $Y$.
By the Hirzebruch-Riemann-Roch formula,
\eqref{eq-vanishing} and \eqref{eq-iso-XY},
we have
\begin{align}
\label{eq11-pf-prop-ex}
\begin{split}
g & = \int_X \mathrm{Td}\big(TX\big)\mathrm{ch}\big(K_X^2\big)
= \frac{13}{12}\int_X c_1^2\big(TX\big) + \frac{1}{12}\int_X c_2\big(TX\big) \;,\\
1 & = \int_X \mathrm{Td}\big(TX\big)
= \frac{1}{12}\int_X c_1^2\big(TX\big) + \frac{1}{12}\int_X c_2\big(TX\big) \;.
\end{split}
\end{align}
By \eqref{eq-def-w}
and \eqref{eq11-pf-prop-ex},
we have
\begin{equation}
\label{eq13-pf-prop-ex}
w_{-2}(X)
= \frac{1}{12}\int_X c_2\big(TX\big)
+ \frac{1}{12}\int_Y c_1\big(TY\big)
= \frac{13-g}{12} + \frac{2-2g}{12}
= \frac{5-g}{4} \;.
\end{equation}
Let $\omega_\mathrm{WP}\in\Omega^{1,1}(S\backslash\Delta)$
be the Weil-Petersson form (see \eqref{eq-intro-wp}) of $\big(X,Y_s\big)_{s\in S\backslash\Delta}$.
Let $H^\bullet(Y)$ be the variation of Hodge structure
associated with $\big(Y_s\big)_{s\in S\backslash\Delta}$.
Let $\omega_{H^\bullet(Y)}\in\Omega^{1,1}(S\backslash\Delta)$
be the Hodge form (see \eqref{eq-def-hodgeform}) of $H^\bullet(Y)$.
By Theorem \ref{intro-thm-curvature}
and \eqref{eq13-pf-prop-ex},
we have
\begin{equation}
\label{eq14a-pf-prop-ex}
\frac{\overline{\partial}\partial}{2\pi i} \tau(X,Y) =
\omega_{H^\bullet(Y)}
+ \frac{g-5}{4} \omega_{\mathrm{WP}} \;.
\end{equation}

Let $U\subseteq S$ be a small open subset.
Let $\Big(\gamma_s \in H^0(X,K_X^{-2})\Big)_{s\in U}$ be a holomorphic family
such that $\mathrm{Div}(\gamma_s) = Y_s$.
Let $\int_X \big|\gamma\overline{\gamma}\big|^{-1/2}$
be the function $s\mapsto\int_X \big|\gamma_s\overline{\gamma_s}\big|^{-1/2}$ on $U$.
By \eqref{eq-intro-wp},
we have
\begin{equation}
\label{eq14b-pf-prop-ex}
\omega_{\mathrm{WP}}\big|_U =
-\frac{\overline{\partial}\partial}{2\pi i} \log \int_X \big|\gamma\overline{\gamma}\big|^{-1/2} \;.
\end{equation}
For $s\in U$,
let $J(\gamma_s,\phi_X)$ be as in \eqref{eq-def-J} with $\gamma$ replaced by $\gamma_s$.
Let $J(\gamma,\phi_X)$ be the function $s\mapsto J(\gamma_s,\phi_X)$ on $U$.
By \eqref{eq-def-f}, \eqref{eq-def-hodgeform} and the second identity in \eqref{eqJ-pf-prop-resol},
we have
\begin{equation}
\label{eq14c-pf-prop-ex}
\omega_{H^\bullet(Y)}\big|_U =
\frac{\overline{\partial}\partial}{2\pi i} \log \big| J(\gamma,\phi_X) \big| \;.
\end{equation}
From \eqref{eq14a-pf-prop-ex}-\eqref{eq14c-pf-prop-ex},
we obtain
\begin{equation}
\label{eq14-pf-prop-ex}
\frac{\overline{\partial}\partial}{2\pi i} \tau(X,Y) \big|_U =
\frac{\overline{\partial}\partial}{2\pi i} \log \big| J(\gamma,\phi_X) \big|
+ \frac{5-g}{4} \frac{\overline{\partial}\partial}{2\pi i}
\log \int_X \big|\gamma\overline{\gamma}\big|^{-1/2} \;.
\end{equation}

\noindent\textbf{Step 2.}
We calculate $\overline{\partial}\partial\tau(X',\iota)$.

Let $M$ be the type of the $2$-elementary K3 surface $(X_s',\iota_s)$ (see \cite[Definition 1.4]{y04}).
Then $M$ is a lattice.
Following the notation in \cite{y04},
we denote by $r(M)$ be the rank of $M$.
By \cite[(1.9)]{y04} and \cite[Theorem 1.5]{y04},
we have
\begin{equation}
\label{eqrM-pf-prop-ex}
r(M) = 11 - g \;.
\end{equation}

For $s\in S\backslash\Delta$,
let $f_s : X_s' \rightarrow X$ be the ramified double covering
whose branch locus is $Y_s$.
Let $\Big(\eta_s\in H^0(X_s',K_{X_s'})\Big)_{s\in U}$ be a holomorphic family
such that
\begin{equation}
\label{eq15-pf-prop-ex}
\eta_s^2 = f_s^* \gamma_s^{-1} \;.
\end{equation}
Let $\int_{X'} \big|\eta\overline{\eta}\big|$
be the function
$s\mapsto \int_{X_s'} \big|\eta_s\overline{\eta}_s\big|$ on $U$.

Now we apply \cite[(5.15)]{y04} to the family $\big((X_s',\iota_s)\big)_{s\in U}$.
The terms on the right hand side of \cite[(5.15)]{y04}
are explained in \cite[(5.4)]{y04}
and the paragraph containing \cite[(5.13)]{y04}.
We obtain
\begin{equation}
\label{eq16-pf-prop-ex}
\frac{\overline{\partial}\partial}{2\pi i} \tau(X',\iota) \big|_U =
\frac{6-r(M)}{4} \frac{\overline{\partial}\partial}{2\pi i}
\log \int_{X'} \big|\eta\overline{\eta}\big|
- \frac{\overline{\partial}\partial}{2\pi i} \log \big| J(\gamma,\phi_X) \big| \;.
\end{equation}

\noindent\textbf{Step 3.}
We conclude.

By \eqref{eq15-pf-prop-ex},
we have
\begin{equation}
\label{eq17-pf-prop-ex}
2 \int_X \big|\gamma\overline{\gamma}\big|^{-1/2} =
\int_{X'} \big|\eta\overline{\eta}\big| \;.
\end{equation}
By \eqref{eq14-pf-prop-ex}, \eqref{eqrM-pf-prop-ex}, \eqref{eq16-pf-prop-ex} and \eqref{eq17-pf-prop-ex},
we have
\begin{equation}
\label{eq30-pf-prop-ex}
\overline{\partial}\partial
\big(\tau(X,Y) + \tau(X',\iota)\big) = 0 \;.
\end{equation}

Let $s_0\in\Delta$.
We identify a neighborhood of $s_0\in S$ with the unit disc $D$
such that $s_0$ is identified with $0\in D$.
Let $t\in D$ be the coordinate.
By Proposition \ref{prop-asymp-tau},
as $t\rightarrow 0$,
we have
\begin{equation}
\label{eq21-pf-prop-ex}
\tau(X,Y_t) = \frac{1}{8} \log |t|^2
+ \mathscr{O}\big(\log(-\log|t|)\big) \;.
\end{equation}
By \cite[Theorem 6.6]{y04},
as $t\rightarrow 0$,
we have
\begin{equation}
\label{eq32-pf-prop-ex}
\tau(X_t',\iota_t) = - \frac{1}{8} \log |t|^2
+ \mathscr{O}\big(\log(-\log|t|)\big) \;.
\end{equation}
By \eqref{eq30-pf-prop-ex}-\eqref{eq32-pf-prop-ex},
$\tau(X,Y)+\tau(X',\iota)$ is constant on $S\backslash\Delta$.
This completes the proof.
\end{proof}

\section{Behavior of $\tau(X,Y)$ under blow-up}
\label{sect-bl}

\subsection{Vanishing of curvature}

Let $S$ be a complex manifold.
Let $\big(X_s,Y_s\big)_{s\in S}$
be a holomorphic family of $1$-Calabi-Yau pairs.
We assume that $\big(X_s\big)_{s\in S}$, viewed as fibration over $S$, is locally K{\"a}hler.
Let $\big(Z_s \subseteq X_s \big)_{s\in S}$
be a holomorphic family of closed complex submanifolds of codimension $2$.
We assume that $Z_s \cap Y_s = \emptyset$ for any $s\in S$.
Let $f_s : X'_s \rightarrow X_s$ be the blow-up along $Z_s$.
Set $Y'_s = f^{-1}_s(Y_s \cup Z_s)$.
Then $\big(X'_s,Y'_s\big)_{s\in S}$ is
a holomorphic family of $1$-Calabi-Yau pairs.
Let $\tau(X,Y)$ (resp. $\tau(X',Y')$) be the function
$s\mapsto \tau(X_s,Y_s)$ (resp. $s\mapsto \tau(X'_s,Y'_s)$) on $S$.

\begin{prop}
\label{prop-bl}
The following identity holds,
\begin{equation}
\label{eq-prop-bl}
\overline{\partial}\partial \Big( \tau(X',Y') - \tau(X,Y) \Big) = 0 \;.
\end{equation}
\end{prop}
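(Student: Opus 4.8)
The plan is to apply the curvature formula of Theorem~\ref{intro-thm-curvature} to the two families $(X_s,Y_s)_{s\in S}$ and $(X'_s,Y'_s)_{s\in S}$ and to check that the two right hand sides agree. All families involved are locally K{\"a}hler: $(X'_s)$ is the blow-up of the locally K{\"a}hler family $(X_s)$, while $(Z_s)$ and $D_s:=f_s^{-1}(Z_s)=\mathbb{P}(N_{Z_s})$ are (projective bundles over) submanifolds of locally K{\"a}hler families, so all the Hodge forms below make sense. Since $m=1$, Theorem~\ref{intro-thm-curvature} reads
\begin{equation*}
\frac{\overline{\partial}\partial}{2\pi i}\tau(X,Y)=\omega_{H^\bullet(X)}-\frac12\,\omega_{H^\bullet(Y)}-w_1(X)\,\omega_{\pi_\mathscr{X},\pi_\mathscr{Y}}\;,
\end{equation*}
together with the analogous identity for $(X',Y')$, so it suffices to compare the Hodge forms of $X'$ and $X$, those of $Y'$ and $Y$, the numbers $w_1(X')$ and $w_1(X)$, and the two Weil--Petersson forms.

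For the Hodge forms, the main input is the blow-up formula for cohomology of \cite[Th{\'e}or{\`e}me~7.31]{v}, which I would use in its refined form as an identity of variations of Hodge structure over $S$, the summand coming from $Z$ being Tate-twisted, i.e. right-shifted in the sense of \eqref{eq-def-shift}:
\begin{equation*}
H^\bullet(X')=H^\bullet(X)\oplus H[1]^\bullet(Z)\;,\qquad H^\bullet(D)=H^\bullet(Z)\oplus H[1]^\bullet(Z)\;.
\end{equation*}
Since $Z_s\cap Y_s=\emptyset$ and $\mathrm{codim}\,Z_s=2$, the canonical divisor $Y'_s$ splits as a disjoint union $f_s^{-1}(Y_s)\sqcup D_s$ with $f_s^{-1}(Y_s)\cong Y_s$, whence $H^\bullet(Y')=H^\bullet(Y)\oplus H^\bullet(D)$. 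The Hodge form is additive over direct sums of variations of Hodge structure — this follows immediately from Proposition~\ref{prop-hodgeform} by choosing a Hermitian metric adapted to the decomposition, using that $\omega_{H^\bullet}$ does not depend on the metric — and it is invariant under right shift by Proposition~\ref{prop-shift-hodgeform}. Therefore $\omega_{H^\bullet(X')}=\omega_{H^\bullet(X)}+\omega_{H^\bullet(Z)}$ and $\omega_{H^\bullet(Y')}=\omega_{H^\bullet(Y)}+2\,\omega_{H^\bullet(Z)}$.

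The remaining two comparisons are routine. From $\chi(X')=\chi(X)+\chi(Z)$ and $\chi(Y')=\chi(Y)+2\chi(Z)$ (as in Remark~\ref{rem-tau-aux-bl}) and the definition \eqref{eq-def-w} of $w_1$ one obtains $w_1(X')=w_1(X)$; this is the case $\gamma\in H^0(X,K_X)$ of \eqref{eq1-rem-tau-aux-bl}. For the Weil--Petersson forms, choose locally over $S$ a holomorphic section $\gamma_s\in H^0(X_s,K_{X_s})$ with $\mathrm{Div}(\gamma_s)=Y_s$ and let $\gamma'_s\in H^0(X'_s,K_{X'_s})$ be $f_s^*\gamma_s$ tensored with the canonical section of $\mathscr{O}_{X'_s}(D_s)$, so that $\mathrm{Div}(\gamma'_s)=Y'_s$; since $f_s$ restricts to a biholomorphism $X'_s\backslash D_s\to X_s\backslash Z_s$ carrying $\gamma'_s$ to $\gamma_s$ and $D_s$, $Z_s$ have Lebesgue measure zero, one has $\int_{X'_s}\big|\gamma'_s\overline{\gamma'_s}\big|=\int_{X_s}\big|\gamma_s\overline{\gamma_s}\big|$, so the function $P$ of \eqref{eq0-intro-wp}, and hence the Weil--Petersson form \eqref{eq-intro-wp}, coincide for the two families.

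Substituting these identities into the curvature formula, the term $\omega_{H^\bullet(Z)}$ arising from $X'$ is exactly cancelled by the term $-\frac12\cdot 2\,\omega_{H^\bullet(Z)}$ arising from $Y'$, so $\frac{\overline{\partial}\partial}{2\pi i}\tau(X',Y')=\frac{\overline{\partial}\partial}{2\pi i}\tau(X,Y)$, which is \eqref{eq-prop-bl}. I expect the one genuinely delicate point to be the assertion that the classical blow-up decomposition of cohomology promotes to a decomposition of variations of Hodge structure with the correct Tate twist; once this is granted, the rest is bookkeeping resting on Theorem~\ref{intro-thm-curvature}, Propositions~\ref{prop-hodgeform} and~\ref{prop-shift-hodgeform}, and the Euler-number identities of Remark~\ref{rem-tau-aux-bl}.
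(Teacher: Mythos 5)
Your proposal is correct and follows essentially the same route as the paper: apply Theorem \ref{intro-thm-curvature} to both families, decompose $H^\bullet(X')$ and $H^\bullet(Y')$ via the blow-up formula of \cite[Th\'eor\`eme 7.31]{v} as variations of Hodge structure with a right-shifted $H^\bullet(Z)$ summand, invoke Proposition \ref{prop-shift-hodgeform} to get $\omega_{H^\bullet(X')}=\omega_{H^\bullet(X)}+\omega_{H^\bullet(Z)}$ and $\omega_{H^\bullet(Y')}=\omega_{H^\bullet(Y)}+2\omega_{H^\bullet(Z)}$, and check $w_1(X')=w_1(X)$ and the equality of Weil--Petersson forms. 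The only differences are cosmetic: you supply explicit justifications (additivity of the Hodge form over direct sums, the measure-zero argument for the Weil--Petersson forms) that the paper leaves implicit.
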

\begin{proof}
We consider the variations of Hodge structure
$H^\bullet(X)$, $H^\bullet(Y)$,
$H^\bullet(X')$, $H^\bullet(Y')$
and $H^\bullet(Z)$ over $S$.
Let $H(Z)[1]^\bullet$ be the first right shift of $H^\bullet(Z)$
(see \eqref{eq-def-shift}).
By \cite[Th{\'e}or{\`e}me 7.31]{v},
we have
\begin{equation}
\label{eq11-pf-prop-bl}
H^\bullet(X') = H^\bullet(X) \oplus H(Z)[1]^\bullet \;,\hspace{5mm}
H^\bullet(Y') = H^\bullet(Y) \oplus H^\bullet(Z) \oplus H(Z)[1]^\bullet \;.
\end{equation}
By Proposition \ref{prop-shift-hodgeform}
and \eqref{eq11-pf-prop-bl},
we have
\begin{equation}
\label{eq12-pf-prop-bl}
\omega_{H^\bullet(X')} = \omega_{H^\bullet(X)} + \omega_{H^\bullet(Z)} \;,\hspace{5mm}
\omega_{H^\bullet(Y')} = \omega_{H^\bullet(Y)} + 2 \omega_{H^\bullet(Z)} \;.
\end{equation}

By \eqref{eq-w-w1} and \eqref{eq1-rem-tau-aux-bl},
we have
\begin{equation}
\label{eq13-pf-prop-bl}
w_1(X') = w_1(X) \;.
\end{equation}

Let $\omega_\mathrm{WP}\in\Omega^{1,1}(S)$ (resp. $\omega_\mathrm{WP}'\in\Omega^{1,1}(S)$)
be the Weil-Petersson form (see \eqref{eq-intro-wp})
of $\big(X_s,Y_s\big)_{s\in S}$ (resp. $\big(X'_s,Y'_s\big)_{s\in S}$).
By \eqref{eq-intro-wp},
we have
\begin{equation}
\label{eq14-pf-prop-bl}
\omega_\mathrm{WP} = \omega_\mathrm{WP}' \;.
\end{equation}

By Theorem \ref{intro-thm-curvature},
we have
\begin{align}
\label{eq1-pf-prop-bl}
\begin{split}
\frac{\overline{\partial}\partial}{2\pi i}\tau(X,Y) & =
\omega_{H^\bullet(X)} - \frac{1}{2} \omega_{H^\bullet(Y)}
- w_1(X)\omega_\mathrm{WP} \;,\\
\frac{\overline{\partial}\partial}{2\pi i}\tau(X',Y') & =
\omega_{H^\bullet(X')} - \frac{1}{2} \omega_{H^\bullet(Y')}
- w_1(X')\omega_\mathrm{WP}' \;.
\end{split}
\end{align}
From \eqref{eq12-pf-prop-bl}-\eqref{eq1-pf-prop-bl},
we obtain \eqref{eq-prop-bl}.
This completes the proof.
\end{proof}

\subsection{Proof of Theorem \ref{intro-thm-bl}, \ref{intro-thm-bl-curve}}
\label{subsect-bl3}

In this subsection,
we will only prove Theorem \ref{intro-thm-bl-curve}.
Theorem \ref{intro-thm-bl} can be proved in exactly the same way.

Let $X$ be a complex manifold.
Let $Z\subseteq X$ be a closed complex submanifold.
Let $i_Z: Z \rightarrow X$ be the canonical embedding.
Let $N_Z$ be the normal bundle of $Z\subseteq X$.
Let $\mathcal{N}_Z$ be the total space of $N_Z$.
Let $j_Z: Z \rightarrow \mathcal{N}_Z$ be the embedding defined by the zero section of $N_Z$.
We say that $Z\subseteq X$ satisfies \textbf{Condition*}
if there exist open neighborhoods
\begin{equation}
\label{eq-def-U}
Z\subseteq U \subseteq X \;,\hspace{5mm}
j_Z(Z) \subseteq \mathcal{U} \subseteq \mathcal{N}_Z
\end{equation}
and a biholomorphic map
\begin{equation}
\label{eq-def-varphi}
\varphi: U \rightarrow \mathcal{U}
\end{equation}
such that the following diagram commutes
\begin{equation}
\label{eq2-def-varphi}
\xymatrix{
Z \ar[d]_{i_Z} \ar[dr]^{j_Z} & \\
U \ar[r]^{\hspace{-1.5mm}\varphi} & \mathcal{U} \;.
}
\end{equation}

\begin{prop}
\label{prop-a}
If $X$ is a threefold and $Z\subseteq X$ is a $(-1,-1)$-curve,
then $Z\subseteq X$ satisfies \textbf{Condition*}.
\end{prop}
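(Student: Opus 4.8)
The plan is to produce the open set \eqref{eq-def-U}, the biholomorphism \eqref{eq-def-varphi} and the commutative diagram \eqref{eq2-def-varphi} in three steps: first show that the formal neighborhood of $Z$ in $X$ is standard, i.e.\ isomorphic to the formal neighborhood of the zero section in $\mathcal{N}_Z$; then contract $Z$ and identify the resulting germ with a three-dimensional ordinary double point (conifold); and finally recall that a small resolution of a conifold is, near its exceptional curve, a neighborhood of the zero section of $\mathscr{O}_{{\C}P^1}(-1)^{\oplus 2}$. Since $Z\cong{\C}P^1$ is a $(-1,-1)$-curve, $N_Z\cong\mathscr{O}(-1)^{\oplus 2}$ and the conormal bundle is $N_Z^\vee\cong\mathscr{O}(1)^{\oplus 2}$ throughout.

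For the first step the relevant computation on $Z\cong{\C}P^1$ is, for every $\nu\geqslant 1$,
\[
N_Z\otimes S^{\nu}N_Z^\vee\cong\mathscr{O}(\nu-1)^{\oplus 2(\nu+1)}\;,\qquad
TX\big|_Z\otimes S^{\nu}N_Z^\vee\cong\mathscr{O}(\nu+2)^{\oplus(\nu+1)}\oplus\mathscr{O}(\nu-1)^{\oplus 2(\nu+1)}\;,
\]
where $TX\big|_Z\cong\mathscr{O}(2)\oplus\mathscr{O}(-1)^{\oplus 2}$ because $H^1\big({\C}P^1,\mathscr{O}(3)\big)=0$ makes the normal bundle sequence of $Z\subseteq X$ split. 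Each summand has degree $\geqslant-1$, so all the groups $H^1\big(Z,N_Z\otimes S^{\nu}N_Z^\vee\big)$ and $H^1\big(Z,TX\big|_Z\otimes S^{\nu}N_Z^\vee\big)$ vanish. As the obstruction to extending an isomorphism between the $\nu$-th infinitesimal neighborhoods of $Z\subseteq X$ and of the zero section $Z\subseteq\mathcal{N}_Z$ (inducing the identity on $Z$) to the $(\nu+1)$-st lies in one of these groups, the inductive construction is unobstructed, and the formal neighborhood $\widehat{X}\big|_Z$ is isomorphic, compatibly with $Z$, to the formal neighborhood of the zero section in $\mathcal{N}_Z$.

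Since $N_Z^\vee$ is ample, $Z$ is exceptional in the sense of Grauert, so there exist an open set $Z\subseteq U\subseteq X$ and a proper holomorphic map $\pi\colon U\to V$ onto a normal complex space which is biholomorphic over $V\setminus\{p\}$ with $\pi^{-1}(p)=Z$ for some $p\in V$. The formal completion $\widehat{\mathscr{O}}_{V,p}$ is the ring of formal functions along $Z$, so by the first step it equals the formal completion of the local ring of the conifold $\{z_1z_4=z_2z_3\}\subseteq\C^4$ (contracting the zero section of $\mathscr{O}(-1)^{\oplus 2}$ over ${\C}P^1$ produces exactly this germ); by Artin's approximation theorem for isolated singularities, $(V,p)$ is biholomorphic to the conifold germ. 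Now $\pi$ is a small resolution, its exceptional set $Z$ being of codimension two, and the conifold germ has two small resolutions, each biholomorphic near the exceptional curve to a neighborhood of the zero section of $\mathscr{O}(-1)^{\oplus 2}$ over ${\C}P^1$; the two are interchanged by an automorphism of the conifold, so in either case $U$ is biholomorphic to a neighborhood $\mathcal{U}\subseteq\mathcal{N}_Z$ of $j_Z(Z)$ carrying $Z$ onto $j_Z(Z)$. Composing with the lift to $\mathcal{N}_Z$ of a suitable automorphism of ${\C}P^1$ — automorphisms of ${\C}P^1$ lift to $\mathcal{N}_Z$ because $N_Z$ is homogeneous — one makes the restriction to $Z$ equal to $j_Z\circ i_Z^{-1}$, giving \eqref{eq2-def-varphi}.

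The main obstacle is the passage from the formal to the analytic category. One route is the one sketched above, using Grauert's contraction theorem and Artin approximation. A more analytic alternative is to exploit that the negativity of $N_Z$ gives $Z$ a fundamental system of strongly pseudoconvex neighborhoods; on such a neighborhood one lifts the four formal functions $z_1,\dots,z_4$ coming from the formal isomorphism of the first step to genuine holomorphic functions by solving the associated $\overline{\partial}$-equations, Andreotti--Grauert finiteness guaranteeing that the obstructions, which lie in $H^1\big(Z,S^{\nu}N_Z^\vee\big)=0$, are overcome with uniform estimates so that the series converge, and then one checks that the resulting map $U\to\C^4$ is the small resolution $\pi$. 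Either way, the cohomological and conifold-geometry ingredients are routine, while the convergence is the point requiring care.
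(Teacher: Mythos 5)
Your argument is correct, but it is a genuinely different route from the one in the paper. The paper's proof is a two-line reduction to the divisorial case: it blows up $Z$, observes that the exceptional divisor $D=\mathbb{P}(N_Z)\simeq{\C}P^1\times{\C}P^1$ has negative normal bundle $\mathscr{O}(-1,-1)$, and invokes Grauert's formal principle for exceptional divisors (\emph{\"Uber Modifikationen und exzeptionelle analytische Mengen}, page 363, Satz 7, Corollar) to linearize a neighborhood of $D$ in $X'$; Condition* for $Z\subseteq X$ then follows by blowing back down. You instead work with the curve directly: formal linearization by the vanishing of $H^1\big(Z,TX|_Z\otimes S^{\nu}N_Z^\vee\big)$ (your cohomology computations are correct, including the splitting of the normal bundle sequence), then Grauert contraction of $Z$ to a point, identification of the resulting germ with the conifold via the theorem on formal functions and Artin approximation, and finally the classification of small resolutions of the ordinary double point. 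Both proofs are sound. The paper's version is shorter because Grauert's cited Corollar already packages the passage from formal to convergent for negatively embedded \emph{hypersurfaces}, which is exactly why the blow-up is performed first (the formal principle in codimension $2$ is more delicate); the price is an unstated but easy descent of the biholomorphism through the blow-down. Your version is the folklore conifold argument: it is heavier (contraction theorem, Artin approximation, uniqueness of small resolutions of the ODP up to the flop), but it makes explicit the local geometry that underlies the Atiyah flop used later in the paper. I would drop your final "more analytic alternative" via $\overline{\partial}$-estimates: as you concede, the convergence there is exactly the hard point, and your main route already closes the formal-to-analytic gap cleanly with Artin approximation.
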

\begin{proof}
Let $f: X' \rightarrow X$ be the blow-up along $Z$.
By \cite[page 363, Satz 7, Corollar]{gr},
$f^{-1}(Z)\subseteq X'$ satisfies \textbf{Condition*}.
Hence so does $Z\subseteq X$.
\end{proof}

\begin{proof}[Proof of Theorem \ref{intro-thm-bl-curve}]
For readers' convenience,
we restate the setting in Theorem \ref{intro-thm-bl-curve}.
Let $X$ be a compact K{\"a}hler manifold of dimension $3$.
Let $\gamma\in H^0(X,K_X)$ be a non-zero element.
Let $Y$ be the zero locus of $\gamma$.
We assume that $Y$ is smooth and reduced.
Let $Z\subseteq X$ be a $(-1,-1)$-curve such that $Z\cap Y=\emptyset$.
Let $f: X'\rightarrow X$ be the blow-up along $Z$.
Set $Y' = f^{-1}(Y\cup Z)$.
We need to show that
$\tau(X',Y')-\tau(X,Y)$ is independent of $X$, $Y$, $Z$.
The proof consists of several steps.

\noindent\textbf{Step 1.}
We construct $\nu$.

Let $N_Z$ be the normal bundle of $Z\subseteq X$.
Let $\mathcal{N}_Z$ be the total space of $N_Z$.
Set
\begin{equation}
\label{eq11-pf-intro-thm-bl-curve}
W = \mathbb{P}\big(N_Z\oplus\C\big)
= \mathcal{N}_Z \cup \mathbb{P}\big(N_Z\big) \;.
\end{equation}
Let
\begin{equation}
\label{eq12-pf-intro-thm-bl-curve}
\gamma_Z \in H^0\big(Z,K_X\big|_Z\big) = H^0\big(Z,K_Z \otimes (\Lambda^2 N_Z^*)\big)
\end{equation}
be the restriction of $\gamma$ to $Z$.
Let $\pi: \mathcal{N}_Z \rightarrow Z$ be the canonical projection.
Since $K_W\big|_{\mathcal{N}_Z} = \pi^*\big(K_Z \otimes (\Lambda^2 N_Z^*)\big)$,
we may view $\gamma_Z$ as an element in $\mathscr{M}(W,K_W)$.
Under the identification \eqref{eq11-pf-intro-thm-bl-curve},
we have
\begin{equation}
\label{eq13-pf-intro-thm-bl-curve}
\mathrm{Div}(\gamma_Z) = - 3 \mathbb{P}\big(N_Z\big) \;.
\end{equation}

Let $j_Z: Z \rightarrow \mathcal{N}_Z \subseteq W$
be the embedding defined by the zero section of $N_Z$.
Let $g: W' \rightarrow W$ be the blow-up along $j_Z(Z)\subseteq W$.
Let $\tau(\cdot,\cdot)$ be as in \eqref{eq-def-tau-aux}.
Set
\begin{equation}
\label{eq14-pf-intro-thm-bl-curve}
\nu = \tau(W',g^*\gamma_Z) - \tau(W,\gamma_Z) \;.
\end{equation}
Since $Z$ is a $(-1,-1)$-curve,
the complex manifolds $W$, $W'$,
the map $g: W' \rightarrow W$
and the line $[\gamma_Z]\in \mathbb{P}\big(\mathscr{M}(W,K_W)\big)$
are uniquely determined up to isomorphism.
Then,
by Remark \ref{rem-tau-aux-bl},
$\nu$ depends on nothing.
By Remark \ref{rem-tau-bl},
it remains to show that
\begin{equation}
\label{eq1-pf-intro-thm-bl-curve}
\tau(X',f^*\gamma) - \tau(X,\gamma) =
\tau(W',g^*\gamma_Z) - \tau(W,\gamma_Z) \;.
\end{equation}

\noindent\textbf{Step 2.}
We choose convenient K{\"a}hler forms.

By Proposition \ref{prop-a},
the curve $Z\subseteq X$ satisfies \textbf{Condition*}.
Let $U$, $\mathcal{U}$ and $\varphi$ be as in \eqref{eq-def-U}-\eqref{eq2-def-varphi}.
Let
$T\varphi\in H^0(Z,\mathrm{End}(N_Z))$
be the derivative of $\varphi$ on $Z$.
We may assume that
\begin{equation}
\label{eq2t-pf-intro-thm-bl-curve}
T\varphi = \mathrm{Id} \;.
\end{equation}
We also assume that
there exists a norm $\big|\cdot\big|$ on $N_Z$ such that
\begin{equation}
\label{eq2u-pf-intro-thm-bl-curve}
\mathcal{U} = \big\{ v \in\mathcal{N}_Z \;:\; \big|v\big| < 1 \big\} \;.
\end{equation}

We identity $\mathcal{U}$ with a subset of $W$
via the identification \eqref{eq11-pf-intro-thm-bl-curve}.
Set
\begin{equation}
U' = f^{-1}(U) \subseteq X' \;,\hspace{5mm}
\mathcal{U}' = g^{-1}(\mathcal{U}) \subseteq W' \;.
\end{equation}
Let $\varphi': U' \rightarrow \mathcal{U}'$ be the lift of $\varphi$,
i.e., the following diagram commutes,
\begin{equation}
\label{eq2diag-pf-intro-thm-bl-curve}
\xymatrix{
U' \ar[r]^{\varphi'} \ar[d]_f & \mathcal{U}' \ar[d]^g \\
U \ar[r]^{\varphi}           & \mathcal{U} \;.
}
\end{equation}

For $0<\varepsilon<1$,
set
\begin{equation}
\label{eq2ue-pf-intro-thm-bl-curve}
\mathcal{U}_\varepsilon =
\big\{ v \in\mathcal{N}_Z \;:\; \big|v\big| < \varepsilon \big\} \;.
\end{equation}
Set
\begin{equation}
U_\varepsilon = \varphi^{-1}(\mathcal{U}_\varepsilon) \;,\hspace{5mm}
U_\varepsilon' = f^{-1}(U_\varepsilon) \;,\hspace{5mm}
\mathcal{U}_\varepsilon' = g^{-1}(\mathcal{U}_\varepsilon) \;.
\end{equation}
For $\varepsilon$ small enough,
there exist a K{\"a}hler form $\omega_X$ on $X$ and a K{\"a}hler form $\omega_W$ on $W$ such that
\begin{equation}
\label{eq21-pf-intro-thm-bl-curve}
\omega_W\big|_{\mathcal{U}_\varepsilon}
= \varphi_* \big( \omega_X\big|_{U_\varepsilon} \big) \;.
\end{equation}
Let $\omega_{X'}$ be a K{\"a}hler form on $X'$ such that
\begin{equation}
\label{eq22-pf-intro-thm-bl-curve}
\omega_{X'} \big|_{X'\backslash U_\varepsilon'}
= f^* \big( \omega_X\big|_{X\backslash U_\varepsilon} \big) \;.
\end{equation}
Let $\omega_{W'}$ be the K{\"a}hler form on $W'$ defined by
\begin{equation}
\label{eq23-pf-intro-thm-bl-curve}
\omega_{W'} \big|_{W'\backslash \mathcal{U}_\varepsilon'}
= g^* \big( \omega_W\big|_{W\backslash\mathcal{U}_\varepsilon} \big) \;,\hspace{5mm}
\omega_{W'} \big|_{\mathcal{U}_\varepsilon'}
= \varphi_*' \big( \omega_{X'}\big|_{U_\varepsilon'} \big) \;.
\end{equation}

\noindent\textbf{Step 3.}
We prove that
\begin{equation}
\label{eq3-pf-intro-thm-bl-curve}
\tau_\mathrm{BCOV}(X',\omega_{X'}) - \tau_\mathrm{BCOV}(X,\omega_X) =
\tau_\mathrm{BCOV}(W',\omega_{W'}) - \tau_\mathrm{BCOV}(W,\omega_W) \;.
\end{equation}

We denote $D = f^{-1}(Z)\subseteq X'$.
We have
\begin{equation}
\label{eq31-pf-intro-thm-bl-curve}
D = {\C}P^1 \times Z \;.
\end{equation}
Although $Z \simeq {\C}P^1$,
we will intentionally use the notation in \eqref{eq31-pf-intro-thm-bl-curve}
in order to distinguish the two copies of ${\C}P^1$ in $D \simeq {\C}P^1\times{\C}P^1$.
Let $\mathrm{pr}_1: {\C}P^1 \times Z \rightarrow {\C}P^1$
and $\mathrm{pr}_2: {\C}P^1 \times Z \rightarrow Z$
be the canonical projections.
Let $L_1$ (resp. $L_2$) be the holomorphic line bundle over ${\C}P^1$ (resp. $Z$) of degree $-1$.
Let $N_D$ be the normal bundle of $D\subseteq X'$.
We have
\begin{equation}
\label{eq32a-pf-intro-thm-bl-curve}
N_D = \mathrm{pr}_1^*L_1 \otimes \mathrm{pr}_2^*L_2 \;.
\end{equation}
Since $D\subseteq X'$ satisfies \textbf{Condition*},
we have
\begin{equation}
\label{eq32-pf-intro-thm-bl-curve}
TX'\big|_D = T{\C}P^1 \oplus TZ \oplus N_D \;.
\end{equation}

Set
\begin{align}
\label{eq33-pf-intro-thm-bl-curve}
\begin{split}
& G^1 = T^*{\C}P^1
\subseteq T^*X'\big|_D \;,\\
& G^2 = \big( T^*{\C}P^1 \big) \otimes \big( T^*Z \oplus N_D^* \big)
\subseteq \Lambda^2\big(T^*X'\big)\big|_D \;,\\
& G^3 = \big( T^*{\C}P^1 \big) \otimes \big( T^*Z \big) \otimes N_D^*
= K_{X'}\big|_D \;.
\end{split}
\end{align}
By \eqref{eq32a-pf-intro-thm-bl-curve} and \eqref{eq33-pf-intro-thm-bl-curve},
we have
\begin{align}
\label{eq34-pf-intro-thm-bl-curve}
\begin{split}
& H^q(D,G^p) = 0 \;,\hspace{5mm}\text{for } (p,q) \neq (1,1),(2,2) \;;\\
& H^1(D,G^1) = H^{1,1}({\C}P^1) \otimes H^{0,0}(Z) \;; \hspace{5mm}
H^2(D,G^2) = H^{1,1}({\C}P^1) \otimes H^{1,1}(Z) \;.
\end{split}
\end{align}

Let $i_D: D\rightarrow X'$ be the canonical embedding.
For $p=1,2,3$,
we have the following short exact sequence of analytic coherent sheaves on $X'$,
\begin{equation}
\label{eq35-pf-intro-thm-bl-curve}
0 \rightarrow
f^* \Omega^p_X \rightarrow
\Omega^p_{X'} \rightarrow
i_{D,*} \mathscr{O}_D(G^p) \rightarrow 0 \;.
\end{equation}
Let $\lambda_p(\cdot)$ be as in \eqref{eq-def-lambda-p}.
Taking the determinant of the long exact sequence induced by \eqref{eq35-pf-intro-thm-bl-curve}
and using the fact that
\begin{equation}
H^{p,\bullet}(X) = H^\bullet(X,\Omega^p_X) = H^\bullet(X',f^*\Omega^p_X) \;,
\end{equation}
we get a canonical section
\begin{equation}
\label{eq36-pf-intro-thm-bl-curve}
\sigma_p^f \in \big(\lambda_p(X)\big)^{-1} \otimes \lambda_p(X') \otimes \big(\det H^\bullet(D,G^p)\big)^{-1} \;.
\end{equation}
Set
\begin{equation}
\label{eqG-pf-intro-thm-bl-curve}
\lambda(G^\bullet) =
H^{1,1}({\C}P^1) \otimes H^{0,0}(Z) \otimes \Big(H^{1,1}({\C}P^1) \otimes H^{1,1}(Z)\Big)^2 \;.
\end{equation}
Let $\lambda_\mathrm{dR}(\cdot)$ be as in \eqref{eq-def-lambda-dR}.
By \eqref{eq-def-lambda},
\eqref{eq-def-lambda-dR},
\eqref{eq34-pf-intro-thm-bl-curve},
\eqref{eq36-pf-intro-thm-bl-curve}
and \eqref{eqG-pf-intro-thm-bl-curve},
we have
\begin{equation}
\label{eq37-pf-intro-thm-bl-curve}
\bigotimes_{p=1}^3 \Big(\sigma_p^f \otimes \overline{\sigma_p^f}\Big)^{(-1)^pp} \in
\lambda_\mathrm{dR}^{-1}(X) \otimes \lambda_\mathrm{dR}(X')
\otimes \big(\lambda(G^\bullet)\big)^{-2} \;.
\end{equation}
Let
\begin{equation}
\label{eq38-pf-intro-thm-bl-curve}
\sigma_X \in \lambda_\mathrm{dR}(X) \;,\hspace{5mm}
\sigma_{X'} \in \lambda_\mathrm{dR}(X')
\end{equation}
be as in \eqref{eq-def-sigma},
which are well-defined up to $\pm 1$.
Note that $Z \simeq \C P^1$,
every cohomology group in \eqref{eqG-pf-intro-thm-bl-curve} admits a $\Z$-structure given by $H^\bullet_\mathrm{Sing}(\C P^1,\Z)$.
Similarly to \eqref{eq-def-sigma},
let
\begin{equation}
\tau \in \lambda(G^\bullet)
\end{equation}
be the product of generators of the integer points of the cohomology groups in \eqref{eqG-pf-intro-thm-bl-curve},
which is well-defined up to $\pm 1$.
By the first identity in \eqref{eq11-pf-prop-bl},
we have
\begin{equation}
\label{eq39-pf-intro-thm-bl-curve}
\bigotimes_{p=1}^3 \Big(\sigma_p^f \otimes \overline{\sigma_p^f}\Big)^{(-1)^pp} =
\pm \sigma_X^{-1}\otimes\sigma_{X'} \otimes \tau^{-2} \;.
\end{equation}

Proceeding in the same way as in the last paragraph
with $f: X' \rightarrow X$ replaced by $g: W' \rightarrow W$,
we get canonical sections
\begin{equation}
\sigma_p^g \in \big(\lambda_p(W)\big)^{-1} \otimes \lambda_p(W') \otimes \big(\det H^\bullet(D,G^p)\big)^{-1}
\end{equation}
and
\begin{equation}
\label{eq310-pf-intro-thm-bl-curve}
\bigotimes_{p=1}^3 \Big(\sigma_p^g \otimes \overline{\sigma_p^g}\Big)^{(-1)^pp} \in
\lambda_\mathrm{dR}^{-1}(W) \otimes \lambda_\mathrm{dR}(W')
\otimes \big(\lambda(G^\bullet)\big)^{-2} \;.
\end{equation}
Let
\begin{equation}
\label{eq311-pf-intro-thm-bl-curve}
\sigma_W \in \lambda_\mathrm{dR}(W) \;,\hspace{5mm}
\sigma_{W'} \in \lambda_\mathrm{dR}(W')
\end{equation}
be as in \eqref{eq-def-sigma}.
Similarly to \eqref{eq39-pf-intro-thm-bl-curve},
we have
\begin{equation}
\label{eq312-pf-intro-thm-bl-curve}
\bigotimes_{p=1}^3 \Big(\sigma_p^g \otimes \overline{\sigma_p^g}\Big)^{(-1)^pp} =
\pm \sigma_W^{-1}\otimes\sigma_{W'} \otimes \tau^{-2} \;.
\end{equation}

Let \hbox{$\big\lVert\cdot\big\rVert_{\lambda_p(X),\omega_X}$}
be the Quillen metric on $\lambda_p(X)$ associated with $\omega_X$.
Let \hbox{$\big\lVert\cdot\big\rVert_{\lambda_\mathrm{dR}(X),\omega_X}$}
be the metric on $\lambda_\mathrm{dR}(X)$
induced by \hbox{$\big\lVert\cdot\big\rVert_{\lambda_p(X),\omega_X}$}.
We define
\hbox{$\big\lVert\cdot\big\rVert_{\lambda_\mathrm{dR}(X'),\omega_{X'}}$},
\hbox{$\big\lVert\cdot\big\rVert_{\lambda_\mathrm{dR}(W),\omega_W}$} and
\hbox{$\big\lVert\cdot\big\rVert_{\lambda_\mathrm{dR}(W'),\omega_{W'}}$}
in the same way.
We equip $TD$ with the metric induced by $\omega_{X'}\big|_D$.
We equip $G^\bullet$ with the Hermitian metric induced by $\omega_{X'}$.
Let \hbox{$\big\lVert\cdot\big\rVert_{\lambda(G^\bullet),\omega_{X'}}$}
be the associated Quillen metric on $\lambda(G^\bullet)$.
We may equally view $D$ as a divisor in $W'$ and define
\hbox{$\big\lVert\cdot\big\rVert_{\lambda(G^\bullet),\omega_{W'}}$}
in the same way.

Let $\big\lVert\sigma^f_p\big\rVert$ (resp. $\big\lVert\sigma^g_p\big\rVert$)
be the norm of $\sigma^f_p$ (resp. $\sigma^g_p$) with respect to the Quillen metric.
By Corollary \ref{cor-quillen-bl},
\eqref{eq39-pf-intro-thm-bl-curve} and \eqref{eq312-pf-intro-thm-bl-curve},
we have
\begin{align}
\label{eq313-pf-intro-thm-bl-curve}
\begin{split}
& \log \big\lVert\sigma_{X'}\big\rVert_{\lambda_\mathrm{dR}(X'),\omega_{X'}}
- \log \big\lVert\sigma_X\big\rVert_{\lambda_\mathrm{dR}(X),\omega_X}
- \log \big\lVert\tau\big\rVert^2_{\lambda(G^\bullet),\omega_{X'}} \\
& = \sum_{p=1}^3 (-1)^pp \log \big\lVert\sigma^f_p\big\rVert^2
= \sum_{p=1}^3 (-1)^pp \log \big\lVert\sigma^g_p\big\rVert^2 \\
& = \log \big\lVert\sigma_{W'}\big\rVert_{\lambda_\mathrm{dR}(W'),\omega_{W'}}
- \log \big\lVert\sigma_W\big\rVert_{\lambda_\mathrm{dR}(W),\omega_W}
- \log \big\lVert\tau\big\rVert^2_{\lambda(G^\bullet),\omega_{W'}} \;.
\end{split}
\end{align}
By the second identity in \eqref{eq23-pf-intro-thm-bl-curve},
we have
\begin{equation}
\label{eq314-pf-intro-thm-bl-curve}
\big\lVert\cdot\big\rVert_{\lambda(G^\bullet),\omega_{X'}}
= \big\lVert\cdot\big\rVert_{\lambda(G^\bullet),\omega_{W'}} \;.
\end{equation}
From \eqref{eq-def-bcov-torsion},
\eqref{eq313-pf-intro-thm-bl-curve} and
\eqref{eq314-pf-intro-thm-bl-curve},
we obtain \eqref{eq3-pf-intro-thm-bl-curve}.

Recall that $a_\cdot(\cdot,\cdot)$ was defined by \eqref{eq-def-aX}.

\noindent\textbf{Step 4.}
We show that
\begin{equation}
\label{eq4-pf-intro-thm-bl-curve}
a_{X'}(f^*\gamma,\omega_{X'}) - a_X(\gamma,\omega_X) =
a_{W'}(g^*\gamma_Z,\omega_{W'}) - a_W(\gamma_Z,\omega_W) \;.
\end{equation}

Let $a_{U_\varepsilon}(\gamma,\omega_X)$
be the right hand side of \eqref{eq-def-aX}
with $\int_X$ replaced by $\int_{U_\varepsilon}$
and $\omega$ replaced by $\omega_X$.
We define
$a_{U_\varepsilon'}(f^*\gamma,\omega_{X'})$,
$a_{\mathcal{U}_\varepsilon}(\gamma_Z,\omega_W)$ and
$a_{\mathcal{U}_\varepsilon'}(g^*\gamma_Z,\omega_{W'})$
in the same way.
By \eqref{eq22-pf-intro-thm-bl-curve}
and the first identity in \eqref{eq23-pf-intro-thm-bl-curve},
we have
\begin{align}
\label{eq41-pf-intro-thm-bl-curve}
\begin{split}
a_{X'}(f^*\gamma,\omega_{X'})
- a_X(\gamma,\omega_X) & =
a_{U_\varepsilon'}(f^*\gamma,\omega_{X'})
- a_{U_\varepsilon}(\gamma,\omega_X) \;,\\
a_{W'}(g^*\gamma_Z,\omega_{W'})
- a_W(\gamma_Z,\omega_W) & =
a_{\mathcal{U}_\varepsilon'}(g^*\gamma_Z,\omega_{W'})
- a_{\mathcal{U}_\varepsilon}(\gamma_Z,\omega_W) \;.
\end{split}
\end{align}
By \eqref{eq2diag-pf-intro-thm-bl-curve},
\eqref{eq21-pf-intro-thm-bl-curve}
and the second identity in \eqref{eq23-pf-intro-thm-bl-curve},
we have
\begin{align}
\label{eq42-pf-intro-thm-bl-curve}
\begin{split}
a_{U_\varepsilon}(\gamma,\omega_X)
- a_{\mathcal{U}_\varepsilon}(\gamma_Z,\omega_W) & =
a_{\mathcal{U}_\varepsilon}(\varphi_*\gamma,\omega_W)
- a_{\mathcal{U}_\varepsilon}(\gamma_Z,\omega_W) \;,\\
a_{U_\varepsilon'}(f^*\gamma,\omega_{X'})
- a_{\mathcal{U}_\varepsilon'}(g^*\gamma_Z,\omega_{W'}) & =
a_{\mathcal{U}_\varepsilon'}(g^*\varphi_*\gamma,\omega_{W'})
- a_{\mathcal{U}_\varepsilon'}(g^*\gamma_Z,\omega_{W'}) \;.
\end{split}
\end{align}

By the definition of $\gamma_Z$ (see \eqref{eq12-pf-intro-thm-bl-curve})
and \eqref{eq2t-pf-intro-thm-bl-curve},
there exists $\alpha\in H^0(\mathcal{U}_\varepsilon,\C)$ such that
\begin{equation}
\label{eq43-pf-intro-thm-bl-curve}
\varphi_*\gamma = e^\alpha \gamma_Z \;,\hspace{5mm}
\alpha\big|_Z = 0 \;.
\end{equation}
Let $g^{TW}$ (resp. $g^{TW'}$) be the metric on $TW$ (resp. $TW'$)
induced by $\omega_W$ (resp. $\omega_{W'}$).
By \eqref{eq-def-aX} and \eqref{eq41-pf-intro-thm-bl-curve}-\eqref{eq43-pf-intro-thm-bl-curve},
we have
\begin{align}
\label{eq44-pf-intro-thm-bl-curve}
\begin{split}
& a_{X'}(f^*\gamma,\omega_{X'}) - a_X(\gamma,\omega_X)
- a_{W'}(g^*\gamma_Z,\omega_{W'}) + a_W(\gamma_Z,\omega_W) \\
& = \Big( a_{\mathcal{U}_\varepsilon'}(g^*\varphi_*\gamma,\omega_{W'})
- a_{\mathcal{U}_\varepsilon'}(g^*\gamma_Z,\omega_{W'}) \Big)
- \Big( a_{\mathcal{U}_\varepsilon}(\varphi_*\gamma,\omega_W)
- a_{\mathcal{U}_\varepsilon}(\gamma_Z,\omega_W) \Big) \\
& = \Big( a_{\mathcal{U}_\varepsilon'}(g^*(e^\alpha\gamma_Z),\omega_{W'})
- a_{\mathcal{U}_\varepsilon'}(g^*\gamma_Z,\omega_{W'}) \Big)
- \Big( a_{\mathcal{U}_\varepsilon}(e^\alpha\gamma_Z,\omega_W)
- a_{\mathcal{U}_\varepsilon}(\gamma_Z,\omega_W) \Big) \\
& = \frac{1}{12} \int_{\mathcal{U}_\varepsilon'} c_3\big(TW',g^{TW'}\big) \log \bigg|\frac{g^*(e^\alpha\gamma_Z)}{g^*\gamma_Z}\bigg|^2
- \frac{1}{12} \int_{\mathcal{U}_\varepsilon} c_3\big(TW,g^{TW}\big) \log \bigg|\frac{e^\alpha\gamma_Z}{\gamma_Z}\bigg|^2 \\
& = \frac{1}{12} \int_{\mathcal{U}_\varepsilon'} c_3\big(TW',g^{TW'}\big) g^*(\alpha+\overline{\alpha})
- \frac{1}{12} \int_{\mathcal{U}_\varepsilon} c_3\big(TW,g^{TW}\big) (\alpha+\overline{\alpha}) \\
& = \frac{1}{12} \int_{\mathcal{U}_\varepsilon'}
\Big( c_3\big(TW',g^{TW'}\big) - g^* c_3\big(TW,g^{TW}\big) \Big) g^* (\alpha+\overline{\alpha}) \;.
\end{split}
\end{align}
Proceeding in the same way as in the proof of Theorem \ref{thm-ind-omega},
we see that the right hand side of \eqref{eq44-pf-intro-thm-bl-curve}
is independent of $g^{TW}$ and $g^{TW'}$
as long as they coincide on the boundary of $\mathcal{U}_\varepsilon$.

Recall that $\mathcal{U}_\varepsilon$ was defined by \eqref{eq2ue-pf-intro-thm-bl-curve}.
For $0<\epsilon<1$,
we define a biholomorphic map
\begin{align}
\begin{split}
r_\epsilon: \mathcal{U}_\varepsilon & \rightarrow \mathcal{U}_{\epsilon\varepsilon} \\
v & \mapsto \epsilon v \;.
\end{split}
\end{align}
Let
\begin{equation}
r'_\epsilon: \mathcal{U}_\varepsilon' \rightarrow \mathcal{U}_{\epsilon\varepsilon}'
\end{equation}
be the lift of $r_\epsilon$.
Let $g^{TW}_\epsilon$ and $g^{TW'}_\epsilon$ be metrics on $TW$ and $TW'$
such that they coincide on $\mathcal{U}_\varepsilon\backslash\mathcal{U}_{\epsilon\varepsilon}$.
Since the right hand side of \eqref{eq44-pf-intro-thm-bl-curve} is independent of $g^{TW}$ and $g^{TW'}$,
we have
\begin{align}
\label{eq47-pf-intro-thm-bl-curve}
\begin{split}
& \int_{\mathcal{U}_\varepsilon'}
\Big( c_3\big(TW',g^{TW'}\big) - g^* c_3\big(TW,g^{TW}\big) \Big)
g^* (\alpha+\overline{\alpha})  \\
& = \int_{\mathcal{U}_\varepsilon'}
\Big( c_3\big(TW',g^{TW'}_\epsilon\big) - g^* c_3\big(TW,g^{TW}_\epsilon\big) \Big)
g^* (\alpha+\overline{\alpha}) \\
& = \int_{\mathcal{U}_{\epsilon\varepsilon}'}
\Big( c_3\big(TW',g^{TW'}_\epsilon\big) - g^* c_3\big(TW,g^{TW}_\epsilon\big) \Big)
g^* (\alpha+\overline{\alpha})\\
& = \int_{\mathcal{U}_\varepsilon'}
\Big( c_3\big(TW',(r_\epsilon')^*g^{TW'}_\epsilon\big)
- g^* c_3\big(TW,r_\epsilon^*g^{TW}_\epsilon\big) \Big)
g^* r_\epsilon^* (\alpha+\overline{\alpha}) \\
& = \int_{\mathcal{U}_\varepsilon'}
\Big( c_3\big(TW',g^{TW'}\big) - g^* c_3\big(TW,g^{TW}\big) \Big)
g^* r_\epsilon^* (\alpha+\overline{\alpha}) \;.
\end{split}
\end{align}
Taking $\epsilon\rightarrow 0$ in \eqref{eq47-pf-intro-thm-bl-curve}
and applying the second identity in \eqref{eq43-pf-intro-thm-bl-curve},
we get
\begin{equation}
\label{eq48-pf-intro-thm-bl-curve}
\int_{\mathcal{U}_\varepsilon'}
\Big( c_3\big(TW',g^{TW'}\big) - g^* c_3\big(TW,g^{TW}\big) \Big)
g^* (\alpha+\overline{\alpha}) = 0 \;.
\end{equation}
From \eqref{eq44-pf-intro-thm-bl-curve} and \eqref{eq48-pf-intro-thm-bl-curve},
we obtain \eqref{eq4-pf-intro-thm-bl-curve}.

\noindent\textbf{Step 5.}
We conclude.

By \eqref{eq-def-tau-omega-aux}, \eqref{eq-def-tau-aux}
and \eqref{eq22-pf-intro-thm-bl-curve},
we have
\begin{align}
\label{eq51-pf-intro-thm-bl-curve}
\begin{split}
& \tau(X',f^*\gamma) - \tau(X,\gamma) \\
& = \tau_\mathrm{BCOV}(X',\omega_{X'}) - \tau_\mathrm{BCOV}(X,\omega_X) - \frac{1}{2} \tau_\mathrm{BCOV}(D,\omega_{X'}\big|_D) \\
& \hspace{5mm} + \frac{1}{2} a_D(f^*\gamma,\omega_{X'}) + b_D(\omega_{X'})
- a_{X'}(f^*\gamma,\omega_{X'}) + a_X(\gamma,\omega_X) \;.
\end{split}
\end{align}
By \eqref{eq-def-tau-omega-aux}, \eqref{eq-def-tau-aux}
and the first identity in \eqref{eq23-pf-intro-thm-bl-curve},
we have
\begin{align}
\label{eq52-pf-intro-thm-bl-curve}
\begin{split}
& \tau(W',g^*\gamma_Z) - \tau(W,\gamma_Z) \\
& = \tau_\mathrm{BCOV}(W',\omega_{W'}) - \tau_\mathrm{BCOV}(W,\omega_W) - \frac{1}{2} \tau_\mathrm{BCOV}(D,\omega_{W'}\big|_D) \\
& \hspace{5mm} + \frac{1}{2} a_D(g^*\gamma_Z,\omega_{W'}) + b_D(\omega_{W'})
- a_{W'}(g^*\gamma_Z,\omega_{W'}) + a_W(\gamma_Z,\omega_W) \;.
\end{split}
\end{align}
By the second identity in \eqref{eq23-pf-intro-thm-bl-curve},
we have
\begin{equation}
\label{eq53-pf-intro-thm-bl-curve}
\tau_\mathrm{BCOV}(D,\omega_{X'}\big|_D) = \tau_\mathrm{BCOV}(D,\omega_{W'}\big|_D) \;,\hspace{5mm}
b_D(\omega_{X'}) = b_D(\omega_{W'}) \;.
\end{equation}
By the definition of $\gamma_Z$ (see \eqref{eq12-pf-intro-thm-bl-curve})
and the second identity in \eqref{eq23-pf-intro-thm-bl-curve},
we have
\begin{equation}
\label{eq54-pf-intro-thm-bl-curve}
a_D(f^*\gamma,\omega_{X'}) =
a_D(g^*\gamma_Z,\omega_{W'}) \;.
\end{equation}
From \eqref{eq3-pf-intro-thm-bl-curve}, \eqref{eq4-pf-intro-thm-bl-curve}
and \eqref{eq51-pf-intro-thm-bl-curve}-\eqref{eq54-pf-intro-thm-bl-curve},
we obtain \eqref{eq1-pf-intro-thm-bl-curve}.
This completes the proof.
\end{proof}

\providecommand{\bysame}{\leavevmode\hbox to3em{\hrulefill}\thinspace}
\providecommand{\MR}{\relax\ifhmode\unskip\space\fi MR }
\providecommand{\MRhref}[2]{%
  \href{http://www.ams.org/mathscinet-getitem?mr=#1}{#2}
}
\providecommand{\href}[2]{#2}

\end{document}